\newcolumntype{C}{>{\centering\arraybackslash}X}
\newcommand{\vect}[1]{\mathbf{#1}}
\newcommand{\zb}{\vect{z}}
\newcommand{\rb}{\vect{r}}
\newcommand{\mub}{\boldsymbol{\mu}}
\newcommand{\phasespace}{ \mathcal{M} }
\newcommand{\yb}{ \vect{y} }
\newcommand{\gb}{ \vect{g} }
\newcommand{\xb}{ \vect{x} }
\newcommand{\Nhperp}{\vect{N}_h^\perp}
\newcommand{\PK}{\mathcal{P}_k(K)}
\newcommand{\PKb}{\boldsymbol{\mathcal{P}}_k(K)}
\newcommand{\partialt}[1]{\dot{#1}}
\newcommand{\partialtt}[1]{\ddot{#1}}
\newcommand{\divergence}{\mathrm{div}\, }
\newcommand{\grad}{\vect{\mathrm{grad}}\,}
\newcommand{\rot}{\mathrm{rot}}
\newcommand{\curl}{\vect{\mathrm{curl}}}
\newcommand{\Pbrack}[2]{\left\{ #1,\,#2\right\}}
\newcommand{\fder}[2]{\frac{\delta #1}{\delta #2}}
\newcommand{\Thnorm}[1]{\| #1\|_{\mathcal{T}_{h}}}
\newcommand{\pThnorm}[1]{\| #1\|_{\partial\mathcal{T}_{h}}}
\newcommand{\jump}[1]{\llbracket #1\rrbracket}
\newcommand{\Th}{\mathcal{T}_{h}}
\newcommand{\Faces}{\mathcal{F}_{h}}
\newcommand{\Facesboundary}{\mathcal{F}_{h}^{\partial}}
\newcommand{\Facesinterior}{\mathcal{F}_{h}^{0}}
\newcommand{\Skeleton}{\partial \mathcal{T}_{h}}
\newcommand{\Kprod}[2]{( #1, #2)_{K}}
\newcommand{\pKprod}[2]{\langle #1, #2\rangle_{\partial K}}
\newcommand{\Thprod}[2]{( #1, #2)_{\mathcal{T}_h}}
\newcommand{\pThprod}[2]{\langle #1, #2\rangle_{\partial \mathcal{T}_{h}}}
\newtheorem{thm}{Theorem}
\newcommand{\velocity}{\vect{u}}
\newcommand{\fb}{\vect{f}}
\newcommand{\displacement}{\vect{w}}
\newcommand{\normal}{\vect{n}}
\newcommand{\Hamiltonian}{\mathcal{H}}
\newcommand{\Jop}{\mathrm{J}}
\newcommand{\Lb}{\vect{L}}
\newcommand{\sigmacheck}{\widecheck{\sigma}}
\newcommand{\phihat}{\widehat{\phi}}
\definecolor{Celeste}{HTML}{0176DE}
\definecolor{Azul}{HTML}{173F8A}
\definecolor{Azuloscuro}{HTML}{03122E}
\definecolor{Amarillo}{HTML}{FEC60D}
\definecolor{Amarillooscuro}{HTML}{E3AE00}
\definecolor{Negro}{HTML}{000000}
\definecolor{GrisA}{HTML}{707070}
\definecolor{GrisB}{HTML}{9C9C9C}
\definecolor{GrisC}{HTML}{C6C6C6}
\definecolor{GrisclaroA}{HTML}{EAEAEA}
\definecolor{GrisclaroB}{HTML}{F0F0F0}
\definecolor{GrisclaroC}{HTML}{F6F6F6}
\definecolor{Blanco}{HTML}{FFFFFF}
\definecolor{Verde}{HTML}{00AA00}
\definecolor{Rojo}{HTML}{F24F4F}
\newcommand{\logLogSlopeTriangleRight}[6]
{
    % #1. Relative offset in x direction.
    % #2. Width in x direction, so xA-xB.
    % #3. Relative offset in y direction.
    \pgfplotsextra
    {
        \pgfkeysgetvalue{/pgfplots/xmin}{\xmin}
        \pgfkeysgetvalue{/pgfplots/xmax}{\xmax}
        \pgfkeysgetvalue{/pgfplots/ymin}{\ymin}
        \pgfkeysgetvalue{/pgfplots/ymax}{\ymax}

        % Calculate auxilliary quantities, in relative sense.
        \pgfmathsetmacro{\xArel}{#1}
        \pgfmathsetmacro{\yArel}{#3}
        \pgfmathsetmacro{\xBrel}{#1+#2}
        \pgfmathsetmacro{\yBrel}{\yArel}
        \pgfmathsetmacro{\xCrel}{\xArel}
        %\pgfmathsetmacro{\yCrel}{ln(\yC/exp(\ymin))/ln(exp(\ymax)/exp(\ymin))} % REPLACE THIS EXPRESSION WITH AN EXPRESSION INDEPENDENT OF \yC TO PREVENT THE 'DIMENSION TOO LARGE' ERROR.

        \pgfmathsetmacro{\lnxB}{\xmin*(1-(#1-#2))+\xmax*(#1-#2)} % in [xmin,xmax].
        \pgfmathsetmacro{\lnxA}{\xmin*(1-#1)+\xmax*#1} % in [xmin,xmax].
        \pgfmathsetmacro{\lnyA}{\ymin*(1-#3)+\ymax*#3} % in [ymin,ymax].
        \pgfmathsetmacro{\lnyC}{\lnyA+#4*(\lnxA-\lnxB)}
        \pgfmathsetmacro{\yCrel}{\lnyC-\ymin)/(\ymax-\ymin)} % THE IMPROVED EXPRESSION WITHOUT 'DIMENSION TOO LARGE' ERROR.

        % Define coordinates for \draw. MIND THE 'rel axis cs' as opposed to the 'axis cs'.
        \coordinate (A) at (rel axis cs:\xArel,\yArel);
        \coordinate (B) at (rel axis cs:\xBrel,\yCrel);
        \coordinate (C) at (rel axis cs:\xCrel,\yCrel);

        % Draw slope triangle.
        \draw[#5]   (A)--
                    (B)-- 
                    (C)-- node[pos=0.5,anchor=east] {#6}
                    cycle;
    }
}
\newtheorem{theorem}{Theorem}[section]
\newtheorem{lemma}{Lemma}[section]
\pgfplotsset{compat=newest}
\newsavebox{\measurebox}
\title[Symplectic HDG Methods for Linearized shallow water Equations ]{Symplectic Hamiltonian Hybridizable Discontinuous Galerkin Methods for Linearized Shallow Water Equations}
\author[C. N\'unez]{Cristhian  N\'uñez}
\address[1]{Facultad de Matem\'aticas, Pontificia Universidad Cat\'olica de Chile, Chile}
\email{aanunez6@uc.cl}
\author[M. A. S\'anchez]{Manuel A. S\'anchez}
\address[2]{Instituto de Ingenier\'ia Matem\'atica y Computacional, Facultad de Matem\'aticas y Escuela de Ingenier\'ia, Pontificia Universidad Cat\'olica de Chile, Santiago, Chile}
\email{manuel.sanchez@uc.cl}
\thanks{C. Nuñez was supported by Doctorado Nacional Scholarship 2021, ANID Chile.}
\thanks{M.A. S\'anchez was partially supported by FONDECYT Regular grant N. 1221189 and by Centro Nacional de Inteligencia Artificial CENIA, FB210017, Basal ANID Chile.}
\date{} 
\begin{document}
% title page
\maketitle
% Abstract
\begin{abstract}
This paper focuses on the numerical approximation of the linearized shallow water equations using hybridizable discontinuous Galerkin (HDG) methods,  leveraging the Hamiltonian structure of the evolution system. First, we propose an equivalent formulation of the equations by introducing an auxiliary variable. Then, we discretize the space variables using HDG methods, resulting in a semi-discrete scheme that preserves a discrete version of the Hamiltonian structure. The use of an alternative formulation with the auxiliary variable is crucial for developing the HDG scheme that preserves this Hamiltonian structure. The resulting system is subsequently discretized in time using symplectic integrators, ensuring the energy conservation of the fully discrete scheme. We present numerical experiments that demonstrate optimal convergence rates for all variables and showcase the conservation of total energy, as well as the evolution of other physical quantities.
\end{abstract}
% This paper focuses on the numerical approximation of the linearized shallow water equations using hybridizable discontinuous Galerkin (HDG) methods,  leveraging their Hamiltonian structure. First, we propose an equivalent formulation of the equations by introducing an auxiliary variable. Then, we discretize in space using HDG methods resulting in a semi-discrete scheme that preserves a discrete version of the Hamiltonian structure. The use of an alternative formulation including the auxiliary variable is crucial for developing the HDG scheme that preserves this Hamiltonian structure. This leads us to further discretize the system in time using symplectic time integrators, thereby ensuring the conservation properties of the fully discrete scheme. For the symplectic Hamiltonian HDG method, we present numerical experiments that demonstrate optimal convergence rates for all variables and confirm the conservation of total energy.
% Keywords
{\it Keywords:} Linearized shallow water equations, Hamiltonian structure, hybridizable discontinuous Galerkin methods, symplectic integrators.
%%%%%%%%%%%%%%%%%%%%%%%%%%%%%%%%%%%%%%%%%%%%
\section{Introduction}\label{s:Introduction} 
In this paper, we explore numerical methods that combine hybridizable discontinuous Galerkin (HDG) methods with symplectic time integrators to approximate solutions of the linearized shallow water equations. The shallow water equations play an important role in the modeling of geophysical fluid flows \cite{20_MR1718369}, including tsunami simulations \cite{MR2867413}, hurricane-induced storm surge prediction \cite{Akbar2013}, tidal stream simulation \cite{Dyke1987}, and large-scale atmospheric dynamics \cite{Spassova2011}. Several approaches have been proposed to solve this problem, including finite difference \cite{MR1033901,Arakawa1997},  finite volume \cite{MR1784946,topography22}, finite element \cite{MR3292650,11_ThanBui} methods, and more recently HDG methods \cite{Bui-Thanh2016,10_Dawson}.

%%%%%%%%%%%%%%%%%%%%%%%%%%%%%%%%%%%%%%%%
HDG methods, originally developed in \cite{eliptichdg} for diffusion problems, offer high-order accuracy, geometric flexibility for unstructured meshes, and local conservation properties of discontinuous Galerkin schemes,  while leading to a significant reduction in globally coupled unknowns through hybridization \cite{eliptichdg4}. These methods have been applied to a variety of partial differential equations, including Stokes equations \cite{article0,article1,article3}, wave equations \cite{MR3643828,MR3452794}, and Maxwell's equations \cite{MR2822937,MR4092329}, to name a few. In the case of the shallow water equations, \cite{Bui-Thanh2016} introduces four types of HDG methods for the linearized model, differing in the definition of their numerical fluxes. Two of these fluxes are derived from the Rankine–Hugoniot condition and a simplified variant, while the third and fourth are based on Lax–Friedrichs fluxes. These four methods are dissipative, which is a drawback in applications where energy conservation is important \cite{3_Kieran}.  On the other hand, \cite{10_Dawson} compares two HDG methods for the nonlinear model: an explicit scheme that solves equations locally on each edge without global coupling, and an implicit scheme that yields a globally coupled nonlinear system. Both achieve optimal convergence in numerical experiments, each offering distinct advantages in terms of computational efficiency and stability. However, this comparison does not address energy conservation.

%%%%%%%%%%%%%%%%%%%%%%%%%%%%%%%%%%%%%%%%%%
Hamiltonian systems are a class of dynamical systems whose evolution is governed by a Hamiltonian functional and a Poisson bracket. These systems preserve the energy and the symplectic structure \cite{Olver93book}. In this work, we focus on a particular instance of Hamiltonian systems: the shallow water equations, in which energy conservation plays a crucial role in various applications. However, standard numerical methods usually may suffer energy drift in long-time simulations (see, e.g., \cite{XuvanderVegtBokhove08}). This motivated the development of structure-preserving discretization methods, including finite difference schemes that conserve energy and enstrophy \cite{Arakawa1981, Arakawa1997}, as well as bracket-based discretizations using Poisson and Nambu brackets \cite{Salmon, Salmon2, Salmon2009}.  In the context of finite elements, \cite{XuvanderVegtBokhove08} introduces discontinuous Hamiltonian finite element methods based on a Poisson bracket formulation for linear hyperbolic systems, including shallow-water equations. These methods exhibit improved long-term stability when combined with symplectic time integrators. Moreover, \cite{1_Kieran} develops an entropy-stable discontinuous Galerkin method for the vector invariant shallow water equations on the sphere, ensuring the conservation of mass, vorticity, and geostrophic balance. In a related work, \cite{3_Kieran} proposes a mixed compatible finite element framework for thermal atmospheric dynamics that conserves both energy and entropy without entropy damping.  Further developments by  \cite{12_Cotter,11_Cotter_MR3874575,MR4038160}  studies compatible finite element methods by a conserving Hamiltonian formulation in both planar and spherical geometries. Meanwhile,  \cite{5_GASSNER2016291} and \cite{6_2_WINTERMEYER2018447} propose entropy-stable discontinuous Galerkin methods that also satisfy well-balanced and positivity-preserving properties.

%%%%%%%%%%%%%%%%%%%%%%%%%%%%%%%%%%%%%%%%%%%
In this paper, we propose numerical methods based on a reformulation of the governing equations, resulting in an equivalent system that admits a discrete Hamiltonian structure under HDG discretization. To ensure the conservation of total energy, we discretize time using symplectic timeintegrators. This approach builds upon previous work developed for linear wave propagation \cite{SanchezCiucaNguyenPeraireCockburn2017}, acoustic waves \cite{CockburnFuHungriaSanchezSayas2018}, linear elastodynamics \cite{symplectic1}, electromagnetics \cite{Symplectic_Hamiltonian2,MR4609879}, semilinear wave models \cite{joaquin} and the general framework for linear Hamiltonian systems \cite{CockburnDuSanchez2023}. Specifically, we focus on the linearized shallow water equations, originally introduced in \cite{Bui-Thanh2016,GiraldoWarburton2008},  on a two-dimensional bounded polyhedral domain $\Omega$
\begin{subequations}\label{eq:LSWEs}
\begin{alignat}{4}
\label{eq:LSWEs:a}
\partialt{\phi}&=- \divergence (\Phi \velocity), & &\quad \text{in $\Omega\times(0,T],$}  \\ 
\label{eq:LSWEs:b}
\Phi \partialt{\velocity} & =-\Phi \grad \phi + f (\Phi \velocity)^{\perp} - \gamma (\Phi \velocity)-\vect{s},& & \quad\text{in $\Omega\times(0,T],$}  
\end{alignat}
%\todo{Agregar body force F}
where $\phi = \phi(\xb, t)$ denotes the geopotential height, defined by $\phi = g h$, with $g$ the gravitational acceleration and $h$ the perturbation of the free surface elevation. The vector field $\velocity = \velocity(\xb, t) = [u_1(\xb, t), u_2(\xb, t)]^\top$ represents the horizontal velocity perturbation, and the operator $(\cdot)^{\perp}$ is defined by $\velocity^\perp := [u_2, -u_1]^\top$. The parameter $\Phi > 0$ denotes the constant mean geopotential height of the background flow, $\gamma \geq 0$ is a bottom friction coefficient, $\vect{s}$ is a external force, that can represents the wind effect or bathymetry influence, $f = f_0 + \beta(y - y_m)$ is the Coriolis parameter, with $f_0$, $\beta$, and $y_m$ being prescribed constants.

The initial conditions for the geopotential height and the horizontal velocity are given by
\begin{alignat}{4}\label{eq:LSWEs:c}
\phi(0) = \phi_0, \quad \velocity(0) = \velocity_0, \quad \text{in } \Omega \times {\{t = 0\}}.
\end{alignat}
The initial-boundary value problem is completed with suitable boundary conditions. Here, we consider wall-type boundary condition on the domain boundary $\Gamma := \partial \Omega$
\begin{alignat}{4}\label{eq:LSWEs:d}
\velocity \cdot \normal = 0, \quad \text{on } \Gamma \times (0, T].
\end{alignat}
\end{subequations}
\begin{table}
\caption{Glossary of physical quantities for the shallow water equations}
\centering
\begin{tabular}{@{\hskip .1in}l@{\hskip .3in}c@{\hskip .5in}l@{\hskip .1in}}
\toprule
Name & Symbol & Definition  \\
\midrule
Mass  & $\textit{m}$ & $\phi$ \\[0.25em]
Momentum  &  $\boldsymbol{p}$ & $\Phi \velocity$ \\[0.25em]
Potential energy  & $ P $ & $ \frac{1}{2}\phi^2 $ \\[0.25em]
Kinetic energy  & $ K $ & $\frac{1}{2}\Phi \velocity^2$  \\[0.25em]
Total energy & $\mathcal{E}$ & $ P + K $ \\[0.25em]
Vorticity & $\zeta$ & $ \displaystyle \rot \, \velocity $ \\[0.25em]
Angular momentum & $L$ & $ \displaystyle  \boldsymbol{x}^\perp \cdot (\Phi \velocity) $ \\[0.25em]
Potential vorticity & $\theta$ & $ \displaystyle \Phi (\rot \, \velocity)-f\Phi^{-1} \phi $ \\[0.25em]
Potential enstrophy & $E_n$ & $ \displaystyle  \Phi (\rot \, \velocity)^2 $ \\
\bottomrule 
\end{tabular}
\label{tab:GlossarySW}
\end{table}
\begin{comment}
\begin{table}
\caption{Glossary of physical quantities for the shallow water equations}
    \centering
    \begin{tabular}{@{\hskip .1in}l@{\hskip .3in}c@{\hskip .3in}l@{\hskip .1in}}
\toprule
Name & Symbol & Definition  \\
\midrule
Mass & $m$ &  $ \int_{\Omega}  \phi   $
\\[0.25em]
Linear momentum& $\boldsymbol{p}$ &  $\int_{\Omega}  \Phi \velocity $
\\[0.25em]
Potential energy& $\Gamma$ &  $ \frac{1}{2}\int_{\Omega} \phi^2 \   $
\\[0.25em]
Kinetic energy& $K$ &  $\frac{1}{2} \int_{\Omega} \velocity^2 \   $
\\[0.25em]
Lagrange  & $L$ &  $\Gamma-K $
\\[0.25em]
Total energy& $\mathcal{E}$ &  $\Gamma+K$
\\[0.25em]
Vorticity & $\zeta$ &  $\int_{\Omega}\rot \, \velocity $
\\[0.25em]
Angular momentum& $ L$ &  $\int_{\Omega}\boldsymbol{x}^\perp\cdot ( \Phi \velocity)  $
\\[0.25em]
%Relative Vorticity& $m$ &  $\phi $\\
Potential vorticity& $\theta$ &  $\int_{\Omega}\Phi (\rot \, \velocity)-\frac{f}{\Phi} \phi$
\\[0.25em]
Circulation& $\mathcal{C}$ &  $\int_{\Omega} \Phi (\rot \, \velocity)  $
\\[0.25em]
Potential enstrophy& $E_n$ &  $\int_{\Omega}\Phi (\rot \, \velocity)^2 $ 
\\\bottomrule 
\end{tabular}
\label{tab:GlossarySW}
\end{table}
\end{comment}
Table~\ref{tab:GlossarySW} provides a summary of some important physical quantities associated with the linearized shallow water equations. These include mass, linear momentum, potential and kinetic energy, as well as total energy. Rotation quantities such as vorticity, angular momentum, potential vorticity, and potential enstrophy are also listed. These quantities are fundamental in the analysis of conservation properties; see \cite{20_MR1718369, Essentials} for additional details.

%%%%%%%%%%%%%%%%%%%%%%%%%%%%
We refer to the system \eqref{eq:LSWEs} as the $(\phi-\velocity)$ formulation. Following the methodology studied in \cite{SanchezCiucaNguyenPeraireCockburn2017,  symplectic1, Symplectic_Hamiltonian2,joaquin}, we propose an alternative but equivalent formulation by introducing an auxiliary variable $\displacement$. Specifically, we consider the following reformulation
\begin{subequations}\label{eq:LSWEs2}
\begin{alignat}{4}
\label{eq:LSWE2:a}
\partialt{\displacement} & =  \Phi \velocity, & & \quad \text{in }\Omega\times(0,T], \\ 
\label{eq:LSWE2:b}
\Phi\partialt{\velocity} & =-\Phi\grad \phi +f\Phi\velocity^{\perp} - \gamma \Phi\velocity-\vect{s} ,& &  \quad \text{in }\Omega\times(0,T],\\ 
\label{eq:LSWE2:c}
\phi& =-   \divergence  \displacement,& &\quad   \text{in }\Omega\times(0,T].
\end{alignat}
We observe that in \eqref{eq:LSWE2:c}, the auxiliary variable $\displacement$ satisfies a steady-state relation with the geopotential height $\phi$. The reformulated problem is completed with initial conditions
\begin{alignat}{4}
\label{eq:LSWE2:d} 
\displacement(0) = \displacement_0, \quad \velocity(0) = \velocity_0, \quad \text{in } \Omega \times \{t = 0\},
\end{alignat}
and the wall-type boundary condition
\begin{alignat}{4}
\label{eq:LSWEe}
\displacement \cdot \normal = 0, \quad \text{on } \Gamma \times (0, T].
\end{alignat}
%where $\bar{g} := \Phi \int_0^t g$.
\end{subequations}
We refer to the system \eqref{eq:LSWEs2} as the ($\velocity-\displacement$) formulation. Observe that \eqref{eq:LSWEs2} can be written as a second-order equation for $\displacement$ (a vector wave type equation) as
\begin{alignat*}{4}
\partialtt{\displacement} - \Phi \grad(\divergence \displacement)
- f \, \partialt{\displacement}^{\perp} + \gamma \, \partialt{\displacement} +\vect{s} = 0,
\end{alignat*}
subject to the initial conditions $\displacement(0) = \displacement_0$ and $\dot{\displacement}(0) = \Phi \velocity_0$.  

The paper is organized as follows. In Section~\ref{sec:section2}, we present the two Hamiltonian formulations associated with linearized shallow water equations. Section~\ref{sec:section3} introduces the spatial discretization using HDG methods. In Section~\ref{sec:section4}, we show that the resulting semi-discrete system defines a Hamiltonian system of ordinary differential equations and establish the corresponding conservation laws. Section~\ref{sec:section5} details the construction of symplectic Hamiltonian HDG methods based on the $(\velocity-\displacement)$ formulation. In Section~\ref{sec:section6}, we present numerical experiments that validate the convergence rates and conservation properties of the proposed approach. Finally, Section~\ref{sec:section7} discusses the future work and conclusions.
%%%%%%%%%%%%%%%%%%%%%%%%%%%%%%%%%%%%%%%%%%%%%%%%%%%%%%%%%%%%%%%%%%%%%%%%%%%%
%%%%%%%%%%%%%%%%%%%%%%%%%%%%%%%%%%%%%%%%%%%%%%%%%%%%%%%%%%%%%%%%%%%%%%%%%%%%
%%%%%%%%%%%%%%%%%%%%%%%%%%%%%%%%%%%%%%%%%%%%%%%%%%%%%%%%%%%%%%%%%%%%%%%%%%%%
%%%%%%%%%%%%%%%%%%%%%%%%%%%%%%%%%%%%%%%%%%%%%%%%%%%%%%%%%%%%%%%%%%%%%%%%%%%%
%%%%%%%%%%%%%%%%%%%%%%%%%%%%%%%%%%%%%%%%%%%%%%%%%%%%%%%%%%%%%%%%%%%%%%%%%%%%
%%%%%%%%%%%%%%%%%%%%%%%%%%%%%%%%%%%%%%%%%%%%%%%%%%%%%%%%%%%%%%%%%%%%%%%%%%%%
%%%%%%%%%%%%%%%%%%%%%%%%%%%%%%%%%%%%SECTION%%%%%%%%%%%%%%%%%%%%%%%%%%%%%%%%%
%%%%%%%%%%%%%%%%%%%%%%%%%%%%%%%%%%%%%%%%%%%%%%%%%%%%%%%%%%%%%%%%%%%%%%%%%%%%
%%%%%%%%%%%%%%%%%%%%%%%%%%%%%%%%%%%%%%%%%%%%%%%%%%%%%%%%%%%%%%%%%%%%%%%%%%%%
%%%%%%%%%%%%%%%%%%%%%%%%%%%%%%%%%%%%%%%%%%%%%%%%%%%%%%%%%%%%%%%%%%%%%%%%%%%%
%%%%%%%%%%%%%%%%%%%%%%%%%%%%%%%%%%%%%%%%%%%%%%%%%%%%%%%%%%%%%%%%%%%%%%%%%%%%
%%%%%%%%%%%%%%%%%%%%%%%%%%%%%%%%%%%%%%%%%%%%%%%%%%%%%%%%%%%%%%%%%%%%%%%%%%%%
%%%%%%%%%%%%%%%%%%%%%%%%%%%%%%%%%%%%%%%%%%%%%%%%%%%%%%%%%%%%%%%%%%%%%%%%%%%%
\section{The Hamiltonian structure of linear shallow water equations}\label{sec:section2}
In this section, we present the Hamiltonian structure of the standard $(\phi-\velocity)$ formulation \eqref{eq:LSWEs}, and the alternative $(\velocity-\displacement)$ formulation \eqref{eq:LSWEs2}.  In what follows, unless otherwise stated, we assume that the parameters $\gamma=0$, $\vect{s}=0$ and $\Phi$, $g,$ $f$ are independent of time.

\subsection{Preliminaries}
We now recall the definition of a Hamiltonian system of partial differential equations, following \cite{Olver93book,McLachlan94}. Given a triplet $(\mathcal{M}, \Hamiltonian, \Pbrack{\cdot}{\cdot})$, where $\phasespace$ is a phase-space of smooth functions with suitable conditions, $\Hamiltonian$ is a Hamiltonian functional, and a Poisson bracket $\Pbrack{\cdot}{\cdot}$, a Hamiltonian partial differential equation (PDE) system seeks a function $u\in \phasespace$ satisfying
\begin{equation}\label{eq:HPDE}
\partialt{u} = \Pbrack{u}{\Hamiltonian}(u) = \mathrm{J}\fder{\Hamiltonian}{u}.
\end{equation}
The Poisson bracket $\Pbrack{\cdot}{\cdot}$ is a bilinear and anti-symmetric form defined for functionals $F=F[u]$ and $G=G[u]$ on the phase-space $\mathcal{M}$, induced by an anti-symmetric structure operator $\Jop$ and defined by
\begin{equation}\label{eq:genPbracket}
    \Pbrack{F}{G}(u ) : = \int_{\Omega} \fder{F}{u} \Jop \fder{G}{u}.
\end{equation}
In addition, the definition of the Poisson brackets requires that it satisfies the Jacobi identity, $\{\{F, G\}, H\} + \{ \{H, F\},  G\} + \{ \{  G,  H\},F\} = 0$. For a constant (variable independent) structure operator $\mathrm{J}$, this property is immediately satisfied, see \cite{Olver93book} for more details.  In the following section, we will show that the formulations described by \eqref{eq:LSWEs} and \eqref{eq:LSWEs2} can be written in the structure \eqref{eq:HPDE}. We will present the triplet associated to each formulation, specifying the phase-space, the Poisson bracket, and the Hamiltonian functional.

Standard notation for the functional spaces and differential operators is introduced next; see, for instance, \cite{girauld}. We denote the vector-valued space $\boldsymbol{L}^2(\Omega) := [L^2(\Omega)]^2$ and introduce the following functional spaces
\begin{alignat*}{4}
H(\operatorname{rot} )&:=\{\zb\in \Lb^2(\Omega): \rot \, \zb \in L^2(\Omega) \},& \quad H(\divergence)&:=\{\zb\in \Lb^2(\Omega): \divergence \zb \in L^2(\Omega) \},\\
\mathring{H}(\operatorname{rot})&:=\{\zb\in H(\operatorname{rot}): \zb\cdot \normal^\perp=0\},
& \quad \mathring{H}(\divergence)&:=\{\zb\in H(\divergence): \zb \cdot \normal=0 \},
\end{alignat*}
and the differential operators
$$
\begin{aligned}
\grad \varphi:=(\frac{\partial \varphi}{\partial x}, \frac{\partial \varphi}{\partial y}), \quad \curl \,\varphi & :=(\frac{\partial \varphi}{\partial y},-\frac{\partial \varphi}{\partial x}), \\ \divergence \zb:=\frac{\partial z_{1}}{\partial x}+\frac{\partial z_{2}}{\partial y}, \quad
\operatorname{rot} \zb & :=\frac{\partial z_{2}}{\partial x}-\frac{\partial z_{1}}{\partial y},
\end{aligned}
$$
for a smooth scalar and vector functions $\varphi$  and $\zb=[z_1,z_2]^\top$.
%%%%%%%%%%%%%%%%%%%%%%%%%%%%%%%%% SUBSECTION %%%%%%%%%%%%%%%%%%%%%%%%%%%%%
\subsection{Hamiltonian form of the geopotential height - velocity ($\phi-\velocity$)  formulation} \label{ss:phi-uform}
We consider the system \eqref{eq:LSWEs}, which is equivalent to a Hamiltonian partial differential equation characterized by the triplet $(\phasespace_1, \Hamiltonian_1, \Pbrack{\cdot}{\cdot}_{1})$, where the phase space for the variables $(\phi,\velocity)$ is defined as $\phasespace_1 := L^2(\Omega)\times \mathring{H}(\divergence)$. The corresponding Hamiltonian functional $\Hamiltonian_1$ and Poisson bracket $\Pbrack{\cdot}{\cdot}_1$ are given by
\begin{alignat*}{4}
     \Hamiltonian_{1}[\phi,\velocity]=\frac{1}{2}\int_\Omega \left(\phi^{2} +  \Phi |\velocity|^{2}\right),
\end{alignat*} 
and 
\begin{alignat*}{4}%\nonumber
    \Pbrack{F}{G}_{1} &:=&-\int_\Omega \left(\fder{F}{\phi} \divergence \fder{G}{\velocity} + \fder{F}{\velocity}\cdot \grad \fder{G}{\phi} - \frac{f}{\Phi} \left(\fder{G}{\velocity}\right)^{\perp}\cdot\fder{F}{\velocity}\right),
\end{alignat*}
defined for functionals $F=F[\phi, \velocity]$ and $G=G[\phi, \velocity]$. Note that $\{\cdot,\cdot\}_1$ is indeed a Poisson bracket since it arises from \eqref{eq:genPbracket} with anti-symmetric structure operator 
\begin{alignat*}{4}
\mathrm{J}_1 = -
\begin{bmatrix}
    0 &  \divergence \\
   \grad & -f \Phi^{-1}(\cdot)^{\perp}
\end{bmatrix}.
\end{alignat*}
The proof of the statement is established by demonstrating the equivalence between formulation \eqref{eq:LSWEs} and $\partialt{F}_1 = \{F_1, \Hamiltonian_1\}_1$. This holds for a suitable \emph{coordinate functional} $F_1$, which is defined by
\begin{alignat*}{4}
F_{1}[\phi, \velocity] = \int_{\Omega} (\phi \varphi + \Phi \velocity\cdot\zb),\quad \text{for } (\varphi, \zb) \in L^2(\Omega)\times \mathring{H}(\divergence).
\end{alignat*}
A simple computation shows that the variational derivatives of the coordinate and Hamiltonian functionals are
\begin{alignat*}{4}
\frac{\delta F_1}{\delta \phi} = \varphi,\quad \frac{\delta F_1}{\delta \velocity} = \Phi \zb,\quad 
\frac{\delta \Hamiltonian_1}{\delta \phi} = \phi,\quad \frac{\delta \Hamiltonian_1}{\delta \velocity} = \Phi \velocity, 
\end{alignat*}
and then, by replacing these in the bracket's definition, we obtain that
\begin{align*}
\int_{\Omega}(\partialt{\phi}\varphi + \Phi \partialt{\velocity}\cdot \zb) &= \partialt{F}_{1}[\phi, \velocity] = \{F_1, \Hamiltonian_1\}_1 \\ & = 
-\int_\Omega \left(\fder{F_1}{\phi} \divergence \fder{\Hamiltonian_1}{\velocity} + \fder{F_1}{\velocity}\cdot \grad \fder{\Hamiltonian_1}{\phi} - \frac{f}{\Phi} \left(\fder{\Hamiltonian_1}{\velocity}\right)^{\perp}\cdot\fder{F_1}{\velocity}\right) \\
& = -\int_\Omega \left(\varphi\divergence \Phi\velocity+ \zb\cdot\Phi \grad \phi- f \left(\Phi\velocity\right)^{\perp}\cdot \zb\right)
\end{align*}
This proves the Hamiltonian structure of the formulation \eqref{eq:LSWEs} for the $(\phi,\velocity)$ variables.
%%%%%%%%%%%%%%%%%%%%%SUBSECTION%%%%%%%%%%%%%%%%%%%%%%%%%%%%%%%%%%%%%%%%%%%%%
\subsection{Hamiltonian form of the velocity - flux vector field  $(\velocity-\displacement)$ formulation} \label{ss:u-wform}
The system \eqref{eq:LSWEs2} evolves in time the velocity variable $\velocity$ and the auxiliary variable  $\displacement$, satisfying $\phi = -\divergence\displacement$.  Thus, this system  is equivalent to a Hamiltonian PDE with triplet $(\phasespace_2, \Hamiltonian_2, \Pbrack{\cdot}{\cdot}_2)$, where the phase-space of the variables $(\velocity,\displacement)$ is taken as $\phasespace_2 = \Lb^2(\Omega)\times \ring{H}(\divergence)$, the Hamiltonian functional $\Hamiltonian_2$, and the Poisson bracket $\Pbrack{\cdot}{\cdot}_2$ are given by
\begin{alignat}{4}  \label{eq:Hamiltonian}
\Hamiltonian_{2}[\velocity,\displacement]=\frac{1}{2}\int_\Omega \left( |\divergence \displacement|^2+  \Phi |\velocity|^2\right)
\end{alignat}
and
\begin{alignat*}{4}%\label{eq:Poissonbracket}
\Pbrack{F}{G}_{2} =\int_\Omega \left(\fder{F}{\displacement} \cdot \fder{G}{\velocity} - \fder{F}{\velocity}\cdot \fder{G}{\displacement}   +\frac{f}{\Phi}\left(\fder{G}{\velocity}\right)^{\perp}\cdot \fder{F}{\velocity}\right).
\end{alignat*}
Note that in this case, the bracket is defined for functionals $F = F[\velocity,\displacement]$ and $G=G[\velocity,\displacement]$ and it is also clearly a Poisson bracket since the following anti-symmetric structure operator induces it
\begin{alignat*}{4}
\mathrm{J}_2 = 
\begin{bmatrix}
    f \Phi^{-1}(\cdot)^{\perp} & -\mathrm{Id} \\
    \mathrm{Id} & 0
\end{bmatrix}.
\end{alignat*}
We establish now that the formulation \eqref{eq:LSWEs2} is equivalent to $\partialt{F}_2 = \{F_2,\Hamiltonian_2\}_2$ for the coordinate functional
\begin{alignat*}{4}
F_2[\velocity,\displacement]:= \int_{\Omega} \left(  \Phi\velocity\cdot \zb +\displacement\cdot\rb \right), \quad \text{for } (\zb,\rb)\in \Lb^2(\Omega)\times \ring{H}(\divergence).
\end{alignat*}
Observe that the variational derivative of the coordinate and Hamiltonian functional $\Hamiltonian_2$ are
\begin{alignat*}{4}
\fder{F_2}{\velocity} = \Phi\zb,\quad \fder{F_2}{\displacement} = \rb,\quad 
\fder{\Hamiltonian_2}{\velocity} = \Phi \velocity, \quad \fder{\Hamiltonian_2}{\displacement} = -\grad \divergence \displacement,
\end{alignat*}
and thus
\begin{align*}
 \int_{\Omega} \left(\partialt{\displacement}\cdot\rb + \Phi\partialt{\velocity}\cdot \zb\right)
 & = \partialt{F}_2[\velocity,\displacement] = \{F_2,\Hamiltonian_2\}_2 \\
 & = \int_\Omega \left(\fder{F_2}{\displacement} \cdot \fder{\Hamiltonian_2}{\velocity} - \fder{F_2}{\velocity}\cdot \fder{\Hamiltonian_2}{\displacement}   +\frac{f}{\Phi}\left(\fder{\Hamiltonian_2}{\velocity}\right)^{\perp}\cdot \fder{F_2}{\velocity}\right) \\
 & = \int_\Omega \left(\rb\cdot \Phi\velocity + \Phi\zb\cdot (\grad \divergence \displacement)   +\frac{f}{\Phi}\left(\Phi \velocity \right)^{\perp}\cdot \Phi\zb \right) \\
 & = \int_\Omega \left(\rb\cdot \Phi\velocity - \zb\cdot (\Phi \grad \phi)+f\left(\Phi \velocity \right)^{\perp}\cdot\zb \right),
\end{align*}
for $\phi = -\divergence \displacement$. This proves the Hamiltonian structure of the formulation \eqref{eq:LSWEs2}.
It is crucial to highlight some key differences between the two Hamiltonian formulations. Despite the equivalence of the Hamiltonian functionals $\Hamiltonian_1$ and $\Hamiltonian_2$, the formulations differ in their phase-spaces and Poisson brackets.
%%%%%%%%%%%%%%%%%SUBSECTIO%%%%%%%%%%%%%%%%%%%%%%%%%%%%%%%%%%%%%%%%%%%%%
\subsection{Conservation laws}
One of the objectives of this study is to investigate the conservation of physical quantities relevant to the linearized shallow water equations. This section outlines the conservation laws derived from the equations, as summarized in Table~\ref{tab:ConservationLaws}, and discusses their connection to the Hamiltonian formulation. To illustrate the results shown in the table, we focus on the conservation law of vorticity, defined as $\zeta = \rot\, \velocity$, which has a flux given by $f\velocity$ and a zero source term. This leads to the conservation law
\begin{alignat*}{4}
\partialt{\zeta} + \divergence (f \velocity) = 0,
\end{alignat*}
which can be obtained directly from the original system by applying the $\rot(\cdot)$ operator to the momentum equation \eqref{eq:LSWEs:b}. Equivalently, within the Hamiltonian framework, we can \emph{re-discover} this conservation law. Consider the Hamiltonian system $(\phasespace_1, \Hamiltonian_1, \Pbrack{\cdot}{\cdot}_1)$. By selecting the functional
\[
F_{\zeta}[\phi, \velocity] := \int_\Omega \rot(\velocity) \, \varphi = \int_\Omega \zeta \, \varphi,
\]
for a test function $\varphi \in \mathcal{C}^{\infty}_0(\Omega)$, and evaluating in the bracket, we obtain a weak form of the vorticity conservation law
\begin{alignat*}{4}
    \int_{\Omega}\partialt \zeta \varphi=\partialt F_{\zeta}[\phi, \velocity]=\{F_{\zeta},\Hamiltonian_1\}_{1}=-\int_{\Omega}\left(\curl(\varphi)\grad \phi-\frac{f}{\Phi}(\Phi \velocity)^\perp \curl(\varphi) \right)=-\int_\Omega \divergence (f \velocity )\varphi.
\end{alignat*}
Similar proofs for the conservation laws listed in Table \ref{tab:ConservationLaws} can be obtained by selecting the appropriate functionals associated with the physical quantities. In Table \ref{tab:ConservationLawsHamiltonian}, we present the functionals and bracket calculations between the corresponding functional and the Hamiltonian $\Hamiltonian_1$.
Alternatively, we can also obtain the conservation laws under the Hamiltonian system $(\phasespace_2, \Hamiltonian_2, \Pbrack{\cdot}{\cdot}_2)$. For example, consider the functional
\[
G_{\zeta}[\velocity, \displacement] := \int_\Omega \rot(\velocity) \, \varphi = \int_\Omega \zeta \, \varphi,
\]
and thus
\begin{alignat*}{4}
    \int_{\Omega}\partialt \zeta \varphi=\partialt G_{\zeta}[\velocity,\displacement]=\{G_{\zeta},\Hamiltonian_2\}_{2}&=\int_{\Omega}\left( \curl (\varphi)  \grad(\divergence \displacement)+\frac{f}{\Phi}(\Phi \velocity)^\perp \curl(\varphi) \right)\\
    &=-\int_\Omega \divergence (f \velocity )\varphi.
\end{alignat*}
%%%%%%%%%%%%%%%%%%%%%%%%%%%%% table %%%%%%%%%%%%%%%
\begin{table}
\centering
\begin{tabular}{@{\hskip .1in}l@{\hskip .4in}l@{\hskip .2in}l@{\hskip .2in}l@{\hskip .1in}}
\toprule
Conservation of & $\eta$ & $f_\eta$ & $S_{\eta}$  
\\[0.25em]
\midrule
Mass & $\phi$ &  $\Phi \velocity$ & 0
\\[0.25em]
Linear momentum& $\Phi \velocity$ &  $(\Phi \phi) \mathrm{Id}$ & $ f(\Phi\velocity)^{\perp}$
\\[0.25em]
Vorticity & $\zeta$ &  $f \velocity$ & 0
\\[0.25em]
Angular momentum &  $\vect{x}^\perp\cdot (\Phi\velocity)$ &  $\vect{x}^{\perp} \cdot (\Phi\phi) \mathrm{Id} $ & $f\Phi \vect{x}\cdot\velocity$
\\[0.25em]
%Relative Vorticity& $m$ &  $\phi $\\
Potential vorticity& $\Phi {\zeta}-f\phi$ & $0$ & $0$
%\\[0.25em]
%Circulation& $\Phi \zeta $ &  $ \Phi f \velocity $ & $0$
\\[0.35em]
\midrule
\multicolumn{2}{c} \textbf{(Case $f=0$)}
\\[0.25em]
\midrule
Potential enstrophy& $\Phi \zeta^2$ &  $0$ & $0$ 
\\\bottomrule 
\end{tabular}
\caption{Conservation laws for the physical quantity $\eta$ in system~\eqref{eq:LSWEs} under the assumptions $\gamma = 0$ and $\vect{s} = 0$, where the evolution is governed by $\partialt \eta + \divergence f_\eta = S_\eta$. Here, $f_\eta$ denotes the flux associated with $\eta$, and $S_\eta$ represents its source term.}
\label{tab:ConservationLaws}
\end{table}

\begin{table}
\centering
\begin{tabular}{@{\hskip .1in}l@{\hskip .4in}l@{\hskip .4in}l@{\hskip .1in}}
\toprule
Conservation of & Functional $F$ & Poisson bracket $\Pbrack{F}{\Hamiltonian_1}_1$  \\[0.25em]
\midrule
Mass & $\int_\Omega \phi \varphi$ &  $-\int_\Omega \divergence (\Phi \velocity) \varphi$
\\[0.25em]
Linear momentum& $\int_\Omega \Phi \velocity \cdot \zb$ &  $-\int_\Omega (\Phi \grad  \phi \cdot\zb  - f (\Phi\velocity)^\perp\cdot \zb )$ 
\\[0.25em]
Vorticity & $\int_\Omega \zeta \varphi$ &  $\int_\Omega  -\divergence(f \velocity) \varphi$ 
\\[0.25em]
Angular momentum&  $\int_\Omega \vect{x}^\perp \cdot(\Phi\velocity) \varphi$ &  $-\int_\Omega ( (\vect{x}^\perp\cdot \Phi\grad \phi)\varphi - f \vect{x} \cdot (\Phi \velocity) \varphi )$ 
\\[0.25em]
%Relative Vorticity& $m$ &  $\phi $\\
Potential vorticity& $\int_\Omega( \Phi {\zeta}-f\phi)\varphi $ & $\phantom{-}0$ 
\\[0.25em]
%Circulation& $\int_\Omega \Phi \zeta \varphi $ &  $-\int_\Omega \divergence( \Phi f \velocity) \varphi$ 
%\\[0.35em]
\midrule
\multicolumn{2}{c} \textbf{(Case $f=0$)}
\\[0.25em]
\midrule
Potential enstrophy & $\int_\Omega \Phi \zeta^2 \varphi$ &  $\phantom{-}0$ 
\\\bottomrule
\end{tabular}
\caption{Conservation laws, associated functionals, and the corresponding Poisson brackets with the Hamiltonian $\Hamiltonian_1$, computed under the assumptions $\gamma = 0$ and $\vect{s} = 0$. The test functions are $\varphi \in \mathcal{C}_0^\infty(\Omega)$ and $\zb \in \boldsymbol{\mathcal{C}}_0^\infty(\Omega)$.}
\label{tab:ConservationLawsHamiltonian}
\end{table}%%%%%%%%%%%%%%%%%%%%%%% SECTION %%%%%%%%%%%%%%%%%%%%%%
\section{Semi-discrete hybridizable discontinuous Galerkin methods}\label{sec:section3}
In this section, we introduce the HDG method for the $(\velocity-\displacement)$ formulation. Here, both variables evolve in time, while the geopotential height $\phi$ is implicitly defined through  $\phi=-\divergence \displacement$.  We also detail the initialization procedure, which now incorporates the variable $\displacement$. Finally, we compare this approach with the HDG method introduced in~\cite{Bui-Thanh2016}, which is based on the $(\phi-\velocity)$ formulation \eqref{eq:LSWEs}, highlighting the key differences between both methods.
%%%%%%%%%%%%%%%% SUBSECTION %%%%%%%%%%%%%%%%%%%%%%%%%%%
\subsection{HDG Notation}\label{ss:Notation_hdg} 
Let $\Th$ be a quasi-uniform mesh \cite{MR1930132} consisting of disjoint elements that partition $\overline{\Omega}$, with mesh size parameter $h$, the maximum inner diameter among all elements in $\Th$. Let $\Faces$ denote the set of all edges, with $\Facesinterior$ and $\Facesboundary$ representing the subsets of the interior and boundary edges, respectively. For an element $K \in \Th$, we denote by $\partial K$ the set of edges that form its boundary, and then $\Skeleton$ denotes the union of all the boundaries of the elements in the mesh $\Th$, and is commonly called the mesh skeleton.

Let $\Kprod{\cdot}{\cdot}$ and $\pKprod{\cdot}{\cdot}$ be the $L^2$--inner products over $K$ and $\partial K$, respectively.    
In addition, the piecewise $L^2$--inner products are defined as
\[
\Thprod{\cdot}{\cdot}:=\sum_{K\in \Th} \Kprod{\cdot}{\cdot}, \quad
\pThprod{\cdot}{\cdot} :=\sum_{K\in \Th}\pKprod{\cdot}{\cdot}.
\]
Analogous definitions apply to vector-valued functions. The norms induced by the inner products above are denoted by $\Thnorm{\cdot}$ and $\pThnorm{\cdot}$. We also introduced a notation for the weighted norms induced by the inner products above
\[
\|\cdot\|_{\Th,c}^{2} = \Thprod{c \cdot}{\cdot}, \quad \|\cdot\|_{\partial \Th, a}^2 =\pThprod{a \cdot}{\cdot}, 
\]
for $c,a\geq 0.$
Finally, we introduce the following discontinuous finite-dimensional approximation spaces.  First, we introduce $\mathcal{P}_{k}(A)$  and $\boldsymbol{\mathcal{P}}_{k}(A)$  as the scalar and vector polynomial spaces of degree less than or equal to $k\geq 0$ respectively, defined over the domain $A$. Then, we define the following scalar and vector volumetric approximation function spaces
\begin{alignat*}{4}
{W}_{h}&:=&\left\{ \varphi \in L^{2}(\Omega): \;\varphi |_{K} \in \PK, \;\forall K \in \mathcal{T}_{h}\right\},\\
\vect{V}_{h}&:=&\left\{ \zb \in \Lb^{2}(\Omega):\;\zb|_{K} \in  \PKb, \;\forall K \in \mathcal{T}_{h}\right\},
\end{alignat*}
and the approximation spaces for the numerical traces 
\begin{alignat*}{4}
M_{h}&:=\left\{ \mu \in L^{2}(\Faces) :\mu|_{F} \in \mathcal{P}_k(F)\; \text{for }F\in \Faces\right\},\\
% M_{h}^{0}&:=\left\{ \mu \in M_{h} :\;\mu|_{F}=0\; \text{for }F\in \Facesboundary \right\},\\
\vect{M}_h&:= \{ \mub\in \Lb^2(\Faces)\, : \, \mub|_{\partial K}\in \boldsymbol{\mathcal{P}}_k(\partial K)\}, \\
\Nhperp&:= \{ \mub\in \vect{M}_h:   \mub \cdot\normal=0  \text{ on $\Faces$} \text{ and } \jump{\mub \cdot \normal^\perp}=0 \text{ on $\Facesinterior$} \},
\end{alignat*}
where the jumps of  vector function $\mub$, in the tangential directions across the interior edge $\partial K^{-}\cap \partial K^{+}=:F \in \Facesinterior$, for $K^{-}, K^{+}\in \Th$ is given by
\begin{alignat*}{4}   
    \jump{\mub \cdot \normal^\perp}&:=\mub^{+}\cdot (\normal^\perp )^+ +\mub^{-}\cdot (\normal^\perp)^{-},
\end{alignat*}
where $\normal^{\pm}$ are the outward unit normals to $K^{\pm}$, and  $\varphi^{\pm}=\varphi|_{K^{\pm}}$ and $\mub^{\pm}=\mub|_{K^{\pm}}$. 
%To conclude this section, we introduce the lifting operator $\Lb_{F,K}:L^2(F)\to \boldsymbol{\mathcal{P}}_k(K)$, which is defined by
%\begin{alignat}{4}\label{eq:Lifting_operator}
%\Kprod{\Lb_{F,K}(\mu)}{\rb}=\langle \mu, \rb\cdot \normal^\perp \rangle_{F}, \quad \forall \rb \in \boldsymbol{\mathcal{P}}_k(K).
%\end{alignat}
%Additionally, we define $\Lb_{\partial K}:=\sum_{F \subset \partial K} \Lb_{F,K}$ and set $\Lifting$ in $\partial \Th$ such that $\Lifting|_{\partial K} :=\Lb_{\partial K}.$
%%%%%%%%%%%%%%%%%%%%%%%%%%%%%SUBSECTION%%%%%%%%%%%%%%%%%%%%%%%%%%%%%%%%%%%%%%%%%%
\subsection{HDG scheme for $(\velocity-\displacement)$ formulation} \label{ss:HDGvw}
In this section, we consider the system \eqref{eq:LSWEs2} and approximate the velocity and the auxiliary variable by $\velocity_{h}\in \vect{V}_h$ and $\displacement_h\in \vect{V}_h$, respectively, and the geopotential height and its trace on the skeleton by $\phi_h\in W_h$ and $\phihat_h\in M_h$, respectively. The approximations satisfy the following evolution system
\begin{subequations}\label{eq:HDG-velocity-w}
\begin{alignat}{4}
\label{eq:HDG-velocity-w:a}
\Thprod{\Phi\partialt{\velocity}_h}{\zb}&= \Thprod{ \phi_h}{\divergence (\Phi\zb)}-\pThprod{\phihat_h}{\Phi\zb\cdot\normal} + \Thprod{f \Phi \velocity_{h}^\perp }{\zb}-\Thprod{\vect{s} }{\zb}, &   \quad \forall \zb \in \vect{V}_h, \\ 
\label{eq:HDG-velocity-w:b}
\Thprod{\partialt{\displacement}_h}{\rb} & = \Thprod{\Phi\,\velocity_h}{\rb},&  \quad \forall\rb \in \vect{V}_h,  \\ 
\label{eq:HDG-velocity-w:c}
\Thprod{\phi_h}{\varphi}&=\Thprod{\displacement_h}{\grad \varphi}-\pThprod{\widehat{\displacement}_h\cdot\normal}{\varphi}, &  \quad \forall \varphi \in {W}_{h},
\end{alignat}
where the HDG numerical trace is given by $\widehat{\displacement}_h\cdot \normal=\displacement_h\cdot \normal +\tau(\phi_h -\phihat_h )$ on $\partial \Th$, for $\tau$ the stabilization operator, usually taken as a positive constant. The numerical trace unknown $\phihat_h$ is determined imposing the single-valuedness of the normal component of the numerical trace $\widehat{\displacement}_h$ on $\Faces$
\begin{alignat}{4}\label{eq:HDG-velocity-w:d}
\pThprod{\widehat{\displacement}_h\cdot \normal}{\mu}&=0, &  \quad \forall \mu\in {M}_{h}.
\end{alignat}
\end{subequations}
The system of equations \eqref{eq:HDG-velocity-w} defines the HDG semi-discrete scheme proposed in this work. An appropriate initialization of $\displacement$ in terms of the given initial data $\phi_0$ and $\velocity_0$, as prescribed in \eqref{eq:LSWEs:c} will be addressed in detail in Section~\ref{ss:initialcondition}. We now proceed to prove the energy conservation property of the semi-discrete system directly.
\begin{theorem}
Suppose that the stabilization operator $\tau>0$ is time-independent. Then, the energy (or \emph{numerical energy}) $\Hamiltonian_{2,h} = \frac{1}{2}\|\phi_h\|_{\Th}^{2} + \frac{1}{2}\|\velocity\|_{\Th,\Phi}^{2} + \frac{1}{2}\|\phi_h-\phihat_h\|_{\partial \Th, \tau}^{2}$ of the scheme \eqref{eq:HDG-velocity-w} is conserved in time, \textit{i.e.}, $\partialt{\Hamiltonian}_{2,h} = 0$, when $f=0$, $\gamma=0$ and $\vect{s}=0$.
\end{theorem}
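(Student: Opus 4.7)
The plan is to compute $\partialt{\Hamiltonian}_{2,h}$ directly by combining the semi-discrete equations with carefully chosen test functions, and to show that all non-trivial terms cancel when $f=\gamma=0$ and $\vect{s}=0$.

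First I would handle the kinetic contribution $\tfrac12\|\velocity_h\|_{\Th,\Phi}^2$: setting $\zb=\velocity_h$ in \eqref{eq:HDG-velocity-w:a} and dropping the vanishing Coriolis and source terms gives
\begin{equation*}
\tfrac{1}{2}\tfrac{d}{dt}\|\velocity_h\|_{\Th,\Phi}^2 \;=\; \Thprod{\phi_h}{\divergence(\Phi\velocity_h)} \;-\; \pThprod{\phihat_h}{\Phi\velocity_h\cdot\normal}.
\end{equation*}
For the potential contribution $\tfrac12\|\phi_h\|_{\Th}^2$, I would differentiate \eqref{eq:HDG-velocity-w:c} in time (the stabilization is time-independent), take $\varphi=\phi_h \in W_h$, and then invoke \eqref{eq:HDG-velocity-w:b} to replace $\partialt{\displacement}_h$ by $\Phi\velocity_h$. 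An elementwise integration by parts on $\Thprod{\Phi\velocity_h}{\grad\phi_h}$ yields
\begin{equation*}
\tfrac{1}{2}\tfrac{d}{dt}\|\phi_h\|_{\Th}^2 \;=\; -\Thprod{\divergence(\Phi\velocity_h)}{\phi_h} + \pThprod{\Phi\velocity_h\cdot\normal}{\phi_h} - \pThprod{\partialt{\widehat{\displacement}}_h\cdot\normal}{\phi_h}.
\end{equation*}
Adding the two identities collapses the volumetric terms and leaves
\begin{equation*}
\tfrac{1}{2}\tfrac{d}{dt}\bigl(\|\velocity_h\|_{\Th,\Phi}^2 + \|\phi_h\|_{\Th}^2\bigr) \;=\; \pThprod{\Phi\velocity_h\cdot\normal}{\phi_h-\phihat_h} \;-\; \pThprod{\partialt{\widehat{\displacement}}_h\cdot\normal}{\phi_h}.
\end{equation*}

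Next I would process the numerical trace. Using the flux definition $\widehat{\displacement}_h\cdot\normal = \displacement_h\cdot\normal + \tau(\phi_h-\phihat_h)$ and \eqref{eq:HDG-velocity-w:b} again, its time derivative is $\partialt{\widehat{\displacement}}_h\cdot\normal = \Phi\velocity_h\cdot\normal + \tau(\partialt{\phi}_h-\partialt{\phihat}_h)$. Substituting, the $\Phi\velocity_h\cdot\normal$ pieces partially cancel, leaving
\begin{equation*}
\tfrac{1}{2}\tfrac{d}{dt}\bigl(\|\velocity_h\|_{\Th,\Phi}^2 + \|\phi_h\|_{\Th}^2\bigr) \;=\; -\pThprod{\Phi\velocity_h\cdot\normal}{\phihat_h} \;-\; \pThprod{\tau(\partialt{\phi}_h-\partialt{\phihat}_h)}{\phi_h}.
\end{equation*}
To eliminate the residual $\pThprod{\Phi\velocity_h\cdot\normal}{\phihat_h}$, I would differentiate the conservativity condition \eqref{eq:HDG-velocity-w:d} in time and test with $\mu=\phihat_h(t)\in M_h$, obtaining
\begin{equation*}
\pThprod{\Phi\velocity_h\cdot\normal}{\phihat_h} \;=\; -\pThprod{\tau(\partialt{\phi}_h-\partialt{\phihat}_h)}{\phihat_h}.
\end{equation*}
Inserting this identity and combining the two $\tau$-weighted boundary integrals yields $-\pThprod{\tau(\partialt{\phi}_h-\partialt{\phihat}_h)}{\phi_h-\phihat_h} = -\tfrac12\tfrac{d}{dt}\|\phi_h-\phihat_h\|_{\partial\Th,\tau}^2$, so rearranging gives precisely $\partialt{\Hamiltonian}_{2,h}=0$.

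The only subtle step is the last one: one must recognize that time-differentiating \eqref{eq:HDG-velocity-w:d} and testing against $\phihat_h$ (which is a legal choice only because $M_h$ is time-independent and $\tau$ is constant in $t$) is exactly what produces the stabilization term with the right sign to combine into a perfect time derivative. Everything else is routine bookkeeping: elementwise integration by parts, symmetry of the $L^2$ inner products, and the algebraic identity $\phi_h-\phihat_h$ that naturally appears in HDG-type energy arguments.
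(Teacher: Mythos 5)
Your proposal is correct and follows essentially the same route as the paper's proof: test \eqref{eq:HDG-velocity-w:a} with $\velocity_h$, time-differentiate \eqref{eq:HDG-velocity-w:c} tested with $\phi_h$ and \eqref{eq:HDG-velocity-w:d} tested with $\phihat_h$, use $\partialt{\displacement}_h=\Phi\velocity_h$ and the flux definition, and exploit the time-independence of $\tau$ to recognize the stabilization term as a perfect time derivative. The only differences are cosmetic (where the elementwise integration by parts is performed and the stage at which the conservativity condition is invoked), so the argument matches the paper's.
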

\begin{proof}
Test \eqref{eq:HDG-velocity-w:a} with $\zb = \velocity_h$, \eqref{eq:HDG-velocity-w:b} with $\rb = \displacement_h$, and \eqref{eq:HDG-velocity-w:c} with $\varphi=\phi_h$, and add the equations together to obtain
\begin{align*}
\Thprod{\Phi\partialt{\velocity}_h}{\velocity_h}&= \Thprod{ \phi_h}{\divergence (\Phi\velocity_h)}-\pThprod{\phihat_h}{\Phi\velocity_h\cdot\normal} + \Thprod{f \Phi \velocity_{h}^\perp }{\velocity_h} \\
&= \Thprod{ \phi_h}{\divergence (\Phi\velocity_h)}-\pThprod{\phihat_h}{\Phi\velocity_h\cdot\normal} \\
& = -\Thprod{ \grad\phi_h}{\Phi\velocity_h}-\pThprod{\phihat_h - \phi_h}{\Phi\velocity_h\cdot\normal} \\
& = -\Thprod{ \grad\phi_h}{\partialt{\displacement}_h}-\pThprod{\phihat_h - \phi_h}{\partialt{\displacement}_h\cdot\normal}
\end{align*}
where we used \eqref{eq:HDG-velocity-w:b}. Now, taking time derivative in \eqref{eq:HDG-velocity-w:c} and testing with $\varphi=\phi_h$ we obtain
\[
\Thprod{\partialt{\phi}_h}{\phi_h}=\Thprod{\partialt{\displacement}_h}{\grad \phi_h}-\pThprod{\partialt{\widehat{\displacement}}_h\cdot\normal}{\phi_h} = 
\Thprod{\partialt{\displacement}_h}{\grad \phi_h}-\pThprod{\partialt{\widehat{\displacement}}_h\cdot\normal}{\phi_h-\phihat_h}
\]
where the last line results from taking time derivative of \eqref{eq:HDG-velocity-w:d} and testing with $\mu = \phihat_h$. Therefore, 
\[
\Thprod{\Phi\partialt{\velocity}_h}{\velocity_h} +\Thprod{\partialt{\phi}_h}{\phi_h} =
 - \pThprod{(\partialt{\widehat{\displacement}}_h-\partialt{\displacement}_h)\cdot\normal}{\phi_h-\phihat_h}.
\]
Thus, the result follows after using the definition of the flux $\partialt{\widehat{\displacement}}_h\cdot \normal- \partialt{\displacement}_h\cdot \normal= \tau(\partialt{\phi}_h -\partialt{\phihat}_h )$.
\end{proof}
%%%%%%%%%%%%%%%%%%%%%%%%%%%%%%%%%%%%sSUBSECTION%%%%%%%%%%%%%%%%%%%%%%%%%
\subsection{Initial condition for the $(\velocity-\displacement)$ formulation.} \label{ss:initialcondition}
In symplectic Hamiltonian finite element methods, the initialization plays a crucial role, as the accuracy of the semi-discrete method depends critically on it. For instance, HDG methods applied to the scalar wave equation, both in its second-order form \cite{CockburnFuHungriaSanchezSayas2018} and as a first-order system \cite{SanchezCiucaNguyenPeraireCockburn2017}, employ an initialization based on the approximation of a steady-state problem. This strategy has been shown to be essential in achieving optimal convergence. These initialization procedures typically involve computing compatible initial approximations of both the numerical trace and the volumetric variables. Treating them as independent quantities can lead to suboptimal accuracy, as demonstrated in \cite{CockburnFuHungriaSanchezSayas2018}.
In our setting, we seek initial approximations of the auxiliary variable $\displacement_h(0) \in \vect{V}_h$, the geopotential height $\phi_h(0) \in W_h$, and its numerical trace $\phihat_h(0) \in M_h$ that satisfy the steady-state equations \eqref{eq:HDG-velocity-w:c} and \eqref{eq:HDG-velocity-w:d}, given the initial data $\phi_0$. To this end, we consider the HDG approximation of the following steady-state problem: Given $\fb \in \vect{L}^2(\Omega)$, find a vector-valued function $\displacement$ such that
\begin{subequations}\label{eq:hodge-laplacian}
\begin{alignat}{4}
\label{eq:hodge-laplacian:a}
\curl\, \rot\, \displacement - \grad(\divergence \displacement) &= \fb, \quad & \text{in } \Omega, \\
\label{eq:hodge-laplacian:b}
\displacement \cdot \normal &= 0, & \quad \text{on } \Gamma, \\
\label{eq:hodge-laplacian:c}
\rot\, \displacement &= 0, & \quad \text{on } \Gamma.
\end{alignat}
\end{subequations}
This problem corresponds to the well-known two-dimensional vector-Laplacian equation, studied in  \cite{AwanouGuzmanStern2020, ArnoldFalkGopalakrishnan2012}, with magnetic boundary conditions. The connection between the solution of this problem and the initialization of the auxiliary variable becomes evident when noting that, if $\rot\, \displacement = 0$ and $\fb = \grad \phi_0$, then
\[
- \grad(\divergence \displacement) = \grad \phi_0,
\]
which implies that the auxiliary variable $\displacement$ serves as a potential function whose divergence recovers the initial geopotential gradient.
%\todo{Change $\perp$ definition to $\velocity^{\perp} = [u_2, -u_1]^{\top}$}
We now define an HDG approximation of the two-dimensional vector Laplacian equation \eqref{eq:hodge-laplacian}. To this end, we introduce the scalar variables $\phi = -\divergence \displacement$ and $\sigma = \rot\, \displacement$. We seek approximations $(\displacement_h, \phi_h, \sigma_h) \in \vect{V}_h \times W_h \times W_h$ to $(\displacement, \phi, \sigma)$ in the mesh $\Th$, along with the trace unknowns $(\phihat_h, \widecheck{\displacement}_h) \in M_h \times \Nhperp$ that approximate $\phi$ and the tangential component of $\displacement$ on the mesh skeleton $\Faces$, such that
\begin{subequations}\label{eq:HDG-hodge-laplacian}
\begin{alignat}{4}
\label{eq:HDG-hodge-laplacian:a}
\Thprod{\sigma_h}{\chi}-\Thprod{\displacement_h}{\curl \chi} + \pThprod{\widecheck{\displacement}_h\cdot \normal^{\perp}}{\chi}& =0, & \quad \forall \chi \in {W}_{h}, \\ 
\label{eq:HDG-hodge-laplacian:b}
\Thprod{\phi_h}{\varphi}-\Thprod{\displacement_h}{\grad\varphi}+\pThprod{\widehat{\displacement}_h\cdot \normal}{\varphi
} & =0, & \quad \forall  \varphi \in {W}_{h},
\\ 
\label{eq:HDG-hodge-laplacian:c}
\Thprod{\sigma_h}{\rot \, \zb}+
\pThprod{\sigmacheck_h}{\zb \cdot \normal^{\perp}} - 
\Thprod{\phi_h}{\divergence \zb} +
\pThprod{\phihat_h}{ \zb \cdot \normal } 
& =\Thprod{\fb}{ \zb}, &  \quad \forall \boldsymbol{ z} \in \velocity_h, 
\end{alignat}
where the numerical fluxes are given by
\begin{alignat}{4} 
\label{eq:HDG-hodge-laplacian-d} 
\sigmacheck_h=\sigma_h+\alpha^{-1}(\displacement_h\cdot\normal^{\perp}-\widecheck{\displacement}_h \cdot\normal^{\perp}) & \quad \text{ on } \partial \Th, \\
\label{eq:HDG-hodge-laplacian-e}
\widehat{\displacement}_h\cdot \normal = \displacement_h\cdot\normal+\tau (\phi_h -\phihat_h)& \quad \text{ on } \partial \Th,  
\end{alignat}
for stabilization parameters $\alpha>0$ and $\tau >0$. Imposing the weak continuity of $\sigmacheck_h$ and $\widehat{\displacement}_h\cdot \normal$, completes the system,
\begin{alignat}{4}     
\label{eq:HDG-hodge-laplacian-f}
\pThprod{\sigmacheck_h}{\mub \cdot \normal^\perp}&=0,&  &\quad \forall \mub \in \Nhperp, \\ 
\label{eq:HDG-hodge-laplacian-g}
\pThprod{\widehat{\displacement}_h \cdot \normal}{\mu} &=0,& & \quad \forall {\mu}\in {M}_{h}. 
\end{alignat}
%and the homogeneous boundary condition
%\begin{alignat}{4}  
%\label{eq:HDG-hodge-laplacian:h}
%\Gprod{\sigmacheck_h}{\mu} &=0,  & \quad \forall {\mu}\in {M}_{h}.
%\end{alignat}
\end{subequations}
Next, we will prove the existence and uniqueness of the HDG approximation \eqref{eq:HDG-hodge-laplacian}.
\begin{theorem}
Assume that $\alpha, \tau>0$. Then, the HDG system \eqref{eq:HDG-hodge-laplacian} is well-posed.
\end{theorem}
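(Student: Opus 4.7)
The plan is to exploit that the HDG system \eqref{eq:HDG-hodge-laplacian} is a square linear system on the finite-dimensional space $\vect{V}_h\times W_h\times W_h\times M_h\times \Nhperp$, so well-posedness reduces to showing that the homogeneous problem (with $\fb=0$) admits only the trivial solution. The argument will proceed by deriving an energy identity after testing each equation with the corresponding unknown, and then using the remaining relations to conclude that the vector variable also vanishes.

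First, I would take $\chi=\sigma_h$ in \eqref{eq:HDG-hodge-laplacian:a}, $\varphi=\phi_h$ in \eqref{eq:HDG-hodge-laplacian:b}, $\zb=\displacement_h$ in \eqref{eq:HDG-hodge-laplacian:c}, $\mub=\widecheck{\displacement}_h$ in \eqref{eq:HDG-hodge-laplacian-f}, and $\mu=\phihat_h$ in \eqref{eq:HDG-hodge-laplacian-g}, and add. The volumetric cross-terms involving $\displacement_h$ cancel via element-wise integration by parts using
$\Thprod{\curl\chi}{\zb}=\Thprod{\chi}{\rot\zb}+\pThprod{\chi}{\zb\cdot\normal^\perp}$
and
$\Thprod{\zb}{\grad\varphi}=\pThprod{\varphi}{\zb\cdot\normal}-\Thprod{\divergence\zb}{\varphi}$.
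Substituting the flux definitions \eqref{eq:HDG-hodge-laplacian-d}--\eqref{eq:HDG-hodge-laplacian-e} and invoking \eqref{eq:HDG-hodge-laplacian-f}--\eqref{eq:HDG-hodge-laplacian-g} to eliminate the boundary contributions paired with $\sigmacheck_h$ and $\widehat{\displacement}_h\cdot\normal$, I expect the surviving boundary terms to assemble into the energy identity
\begin{equation*}
\|\sigma_h\|_{\Th}^{2}+\|\phi_h\|_{\Th}^{2}+\|(\displacement_h-\widecheck{\displacement}_h)\cdot\normal^\perp\|_{\partial\Th,\alpha^{-1}}^{2}+\|\phi_h-\phihat_h\|_{\partial\Th,\tau}^{2}=\Thprod{\fb}{\displacement_h}.
\end{equation*}
Setting $\fb=0$ and using $\alpha,\tau>0$ immediately forces $\sigma_h=0$, $\phi_h=0$, $\phi_h=\phihat_h$ on $\Skeleton$, and $\displacement_h\cdot\normal^\perp=\widecheck{\displacement}_h\cdot\normal^\perp$ on $\Skeleton$; consequently $\phihat_h=0$, $\sigmacheck_h=0$, and the flux reduces to $\widehat{\displacement}_h\cdot\normal=\displacement_h\cdot\normal$.

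The remaining step, which I expect to be the main obstacle, is to deduce $\displacement_h=0$. I would revisit \eqref{eq:HDG-hodge-laplacian:a}--\eqref{eq:HDG-hodge-laplacian:b} with these simplifications and integrate by parts element-wise to obtain $\rot\displacement_h=0$ and $\divergence\displacement_h=0$ pointwise in every $K\in\Th$; then \eqref{eq:HDG-hodge-laplacian-g} forces $\jump{\displacement_h\cdot\normal}=0$ on $\Facesinterior$ and $\displacement_h\cdot\normal=0$ on $\Gamma$, while the tangential continuity encoded in $\widecheck{\displacement}_h\in\Nhperp$, combined with $\displacement_h\cdot\normal^\perp=\widecheck{\displacement}_h\cdot\normal^\perp$, yields $\jump{\displacement_h\cdot\normal^\perp}=0$ on $\Facesinterior$. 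Thus $\displacement_h\in H(\operatorname{rot})\cap \mathring{H}(\divergence)$ is a discrete harmonic vector field with vanishing normal trace, and under a standard topological assumption on $\Omega$ (simple connectedness, in the spirit of \cite{AwanouGuzmanStern2020,ArnoldFalkGopalakrishnan2012}, where a stream function $\psi$ with $\displacement_h=\curl\psi$ satisfies a homogeneous Dirichlet Laplace problem) this forces $\displacement_h=0$. Then $\widecheck{\displacement}_h=0$ follows since $\widecheck{\displacement}_h\cdot\normal=0$ by definition and $\widecheck{\displacement}_h\cdot\normal^\perp=\displacement_h\cdot\normal^\perp=0$. As the system is linear and square, uniqueness yields existence, completing the proof.
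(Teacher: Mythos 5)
Your proposal is correct and follows essentially the same route as the paper: the same energy identity obtained by testing with $(\sigma_h,\phi_h,\displacement_h)$ and eliminating boundary terms via the flux definitions and the two continuity equations, the same deduction that $\rot\,\displacement_h=\divergence\displacement_h=0$ elementwise and that $\displacement_h\in\mathring{H}(\divergence)\cap H(\operatorname{rot})$, and a potential-theoretic final step (your stream function $\curl\,\psi$ versus the paper's decomposition $\grad\psi+\curl\,q$ from Girault--Raviart) before invoking finite dimensionality to pass from injectivity to bijectivity. Your explicit remark that this last step requires a topological assumption on $\Omega$ (trivial first cohomology, e.g.\ simple connectedness) is a point the paper leaves implicit.
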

\begin{proof}
Assume that $\fb=0$. Testing with $\chi=\sigma_h$, $\varphi=\phi_h$ in \eqref{eq:HDG-hodge-laplacian:a} and \eqref{eq:HDG-hodge-laplacian:b} and replacing the definition \eqref{eq:HDG-hodge-laplacian-e} of $\widehat{\displacement}\cdot \normal$, and integrating by parts, we obtain
% \begin{alignat*}{4}\label{hl01}
% \Thprod{\sigma_h}{\sigma_h}-\Thprod{\displacement_h}{\curl \, \sigma_h} + \pThprod{\widecheck{\displacement}_h \cdot \normal^\perp}{\sigma_h} & =0, 
% \\ 
% \Thprod{\phi_h}{\phi_h}- \Thprod{\displacement_h}{\grad\phi_h}+ 
% \pThprod{\displacement_h\cdot\normal+\tau(\phi_h-\phihat_h)}{\phi_h} & =0.
% \end{alignat*}
% Integrating by parts in the above equations, we obtain
\begin{alignat*}{4}
\Thprod{\sigma_h}{\sigma_h}-\Thprod{\rot\,\displacement_h}{\sigma_h}-\pThprod{\displacement_h \cdot \normal^{\perp}}{\sigma_h} + \pThprod{\widecheck{\displacement}_h\cdot\normal^{\perp}}{\sigma_h}=0, & \\
\Thprod{\phi_h}{\phi_h}+\Thprod{\divergence \displacement_h}{\phi_h}-\pThprod{\displacement_h\cdot \normal}{\phi_h} +\pThprod{\displacement_h\cdot \normal+\tau(\phi_h-\phihat_h)}{\phi_h} =0.&
\end{alignat*}
It follows that  
\begin{alignat*}{4}
\Thprod{\rot \,\displacement_h}{\sigma_h} &=& \Thprod{\sigma_h}{\sigma_h} - \pThprod{(\displacement_h-\widecheck{\displacement}_h)\cdot\normal^{\perp}}{\sigma_h},  \\
\Thprod{\divergence \displacement_h}{\phi_h} &=&- \Thprod{\phi_h}{\phi_h}- \pThprod{\tau(\phi_h-\phihat_h)}{\phi_h}.
\end{alignat*}
Now, taking $\zb=\displacement_h$ in \eqref{eq:HDG-hodge-laplacian:c} and using the last two equations, we obtain 
\begin{alignat*}{4}
\Thprod{\sigma_h}{\sigma_h}  - \pThprod{(\displacement_h-\widecheck{\displacement}_h)\cdot\normal^{\perp}}{\sigma_h}+
\pThprod{\sigmacheck_h}{\displacement_h \cdot\normal^{\perp} } + 
\Thprod{\phi_h}{\phi_h} & \\ +
\pThprod{\tau(\phi_h-\phihat_h)}{\phi_h} + 
\pThprod{ \phihat_h}{\displacement_h \cdot \normal } & =0.
\end{alignat*}
The last expression can be rewritten as
\begin{alignat*}{4}
\Thprod{\sigma_h}{\sigma_h} + 
\Thprod{\phi_h}{\phi_h} 
+\pThprod{(\displacement_h -\widecheck{\displacement}_h)\cdot \normal^\perp}{\alpha^{-1}(\displacement_h -\widecheck{\displacement}_h) \cdot \normal^\perp} &
\\ +\pThprod{\tau (\phi_h -\phihat_h)}{(\phi_h -\phihat_h)} +  
\pThprod{\sigmacheck_h}{\widecheck{\displacement}_h \cdot \normal^\perp}+
\pThprod{\widehat{\displacement}_h \cdot \normal}{\phihat_h}
 &=0.
\end{alignat*}
Using the continuity conditions \eqref{eq:HDG-hodge-laplacian-f} and \eqref{eq:HDG-hodge-laplacian-g}, we observe that the last two terms of the left-hand side are zero. Thus, we obtain
 \begin{alignat*}{4}
\Thprod{\sigma_h}{\sigma_h} + 
\Thprod{\phi_h}{\phi_h} 
+\pThprod{\alpha^{-1}(\displacement_h -\widecheck{\displacement}_h)\cdot \normal^\perp}{(\displacement_h -\widecheck{\displacement}_h) \cdot \normal^\perp}& \\
+\pThprod{\tau (\phi_h -\phihat_h)}{(\phi_h -\phihat_h)} &=0.
\end{alignat*}
Since we have the sum of four non-negative terms equal to zero, we can deduce that each term is zero. Therefore, it follows that $ \sigma_h=0, \ \phi_h=0, \  \displacement_h \cdot \normal^\perp=\widecheck{\displacement}_h \cdot \normal^\perp$ and $\phihat_h=0$ on $\partial \Th$. Replacing these zero values in the continuity equations gives that
 \begin{alignat*}{4}
\pThprod{\displacement_h \cdot \normal}{\mu} = 0,%& \quad
%\pThprod{\sigma_h}{\mub \cdot \normal^\perp} =0,
\end{alignat*}
for all $\mu \in M_h$.% and $\mub \in \Nhperp$. This implies that $\displacement_h\cdot \normal = 0$ on $\Gamma$. 
Thus,  replacing $\sigma_h=0$ and $\displacement_h \cdot \normal^\perp=\widecheck{\displacement}_h \cdot \normal^\perp$ in \eqref{eq:HDG-hodge-laplacian:a}, we obtain
\[
\Thprod{\rot\,\displacement_h}{\chi} = 0.
\]
Analogously, replacing $\phi_h = 0 $ and $\phihat_h =0$ in \eqref{eq:HDG-hodge-laplacian:b}, we obtain that
\[
\Thprod{\divergence \displacement_h}{\varphi} = 0.
\]
Since $\rot \, \boldsymbol{\mathcal{P}}_k(K)\subseteq \mathcal{P}_k(K)$ and $\divergence \boldsymbol{\mathcal{P}}_k(K) \subseteq \mathcal{P}_k(K)$ then we conclude that $\rot \, \displacement_h=0$ and $\divergence \displacement_h=0$.
Using the decomposition, for $\displacement_h\in \ring{H}(\divergence)\bigcap H(\rot)$, \cite{girauld}
\[
\displacement_h = \grad{\psi} + \curl\, q,
\]
where $\psi\in H^1(\Omega) \bigcap L_0^2(\Omega)$ and $q\in H^1(\Omega)$ are solutions of the problems
\begin{equation*}
\begin{minipage}{0.45\textwidth}
\begin{alignat*}{2}
    -\Delta \psi &= \divergence \displacement_h = 0 \quad &&\text{in } \Omega, \\
    \frac{\partial \psi}{\partial \normal} &= 0 \quad &&\text{on } \Gamma,
\end{alignat*}
\end{minipage}
\hfill
\text{and}
\hfill
\begin{minipage}{0.45\textwidth}
\begin{alignat*}{2}
    -\Delta q &= \rot \displacement_h = 0 \quad &&\text{in } \Omega, \\
    q &= 0 \quad &&\text{on } \Gamma.
\end{alignat*}
\end{minipage}
\end{equation*}
Then, it follows that $\displacement_h = \grad{0} + \rot\, 0 = 0$.
Therefore, we have shown that the solution operator is injective, and given that we are working in finite-dimensional spaces, we deduce that the operator is bijective, and consequently, the problem is well-posed.
\end{proof}
%%%%%%%%%%%%%%%%%%%%%%%SUBSECTION%%%%%%%%%%%%%%%%%%%%%%%%%%%%%%%%%%%%
\subsection{HDG scheme for the $(\phi-\velocity)$ formulation.}\label{ss:HDGphiv}
We conclude this section by revisiting the system \eqref{eq:LSWEs} and the hybridizable discontinuous Galerkin method introduced in \cite{Bui-Thanh2016}, with the aim of comparing it to the scheme presented in \eqref{eq:HDG-velocity-w}. Consider the formulation \eqref{eq:LSWEs}. The corresponding HDG scheme is stated as follows: Find approximations to the geopotential height and velocity, $(\phi_h, \velocity_h) \in W_h \times \vect{V}_h$, along with a numerical trace approximation $\phihat_h \in M_h$, such that
\begin{subequations}\label{eq:HDG-phi-velocity}
\begin{alignat}{4}
\label{eq:HDG-phi-velocity:a}
\Thprod{\partialt{\phi}_h}{\varphi} & = 
\Thprod{\Phi \velocity_h}{\grad \varphi}-
\pThprod{\Phi \widehat{\velocity}_h\cdot \normal}{\varphi},&  
\quad \forall\varphi\in {W}_{h},\\  
\label{eq:HDG-phi-velocity:b}
\Thprod{\Phi\partialt{\velocity}_h}{\zb}&=
\Thprod{\phi_h}{\divergence (\Phi\zb)}-
\pThprod{\phihat_h}{\Phi\zb \cdot \normal} +
\Thprod{f \Phi\velocity_h^{\perp}}{\zb}-\Thprod{\vect{s} }{\zb}, &   
\quad \forall \zb \in \vect{V}_{h},
\end{alignat}
where  the numerical flux is given by $\Phi\widehat{\velocity}_h\cdot \normal=\Phi\velocity_h\cdot \normal +\tau(\phi_h -\phihat_h )$ on $\partial \Th $. The continuity equation and boundary condition sets the continuity of the flux
\begin{alignat}{4}
\label{eq:HDG-phi-velocity:c}
\pThprod{\widehat{\velocity}_h\cdot \normal}{\mu} &=0, &  \quad \forall {\mu}\in {M}_{h}.
\end{alignat}
\end{subequations}
This scheme corresponds to the HDG-II method of \cite{Bui-Thanh2016}, choosing the stabilization parameter $\tau = \sqrt{\Phi}$. The following theorem, originally stated in \cite{Bui-Thanh2016}, is recalled here to illustrate that the energy of the system is dissipative.
\begin{theorem}
The energy $\mathcal E_h = \frac{1}{2}\|\phi_h\|_{\Th}^{2} + \frac{1}{2}\|\velocity\|_{\Th,\Phi}^{2}$, of the scheme \eqref{eq:HDG-phi-velocity} is dissipative in time, \textit{i.e.},  $\partialt{\mathcal E}_h\leq 0$.
\end{theorem}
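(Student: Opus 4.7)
The plan is to mirror the semi-discrete energy argument used for the $(\velocity-\displacement)$ formulation, but now the boundary terms produced by the numerical flux will no longer cancel: they will collapse into a nonpositive stabilization contribution, giving dissipation rather than conservation. I would assume $\vect{s}=0$ (and $\gamma=0$, consistent with the preceding analysis) so that only the discretization-induced dissipation survives.

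First I would test \eqref{eq:HDG-phi-velocity:a} with $\varphi=\phi_h$ and \eqref{eq:HDG-phi-velocity:b} with $\zb=\velocity_h$, and add the two identities. The Coriolis contribution $\Thprod{f\Phi\velocity_h^{\perp}}{\velocity_h}$ drops out pointwise because $\velocity_h^{\perp}\cdot\velocity_h=0$. An element-wise integration by parts on $\Thprod{\phi_h}{\divergence(\Phi\velocity_h)}$ cancels it against $\Thprod{\Phi\velocity_h}{\grad\phi_h}$ from the other equation, leaving only boundary terms:
\begin{align*}
\partialt{\mathcal E}_h
&=\pThprod{\Phi\velocity_h\cdot\normal}{\phi_h}-\pThprod{\Phi\widehat{\velocity}_h\cdot\normal}{\phi_h}-\pThprod{\phihat_h}{\Phi\velocity_h\cdot\normal}\\
&=-\pThprod{\Phi(\widehat{\velocity}_h-\velocity_h)\cdot\normal}{\phi_h}-\pThprod{\phihat_h}{\Phi\velocity_h\cdot\normal}.
\end{align*}

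Next I would invoke the numerical flux definition $\Phi\widehat{\velocity}_h\cdot\normal=\Phi\velocity_h\cdot\normal+\tau(\phi_h-\phihat_h)$ twice. In the first boundary term it directly gives $-\pThprod{\tau(\phi_h-\phihat_h)}{\phi_h}$. In the second, I rewrite $\Phi\velocity_h\cdot\normal=\Phi\widehat{\velocity}_h\cdot\normal-\tau(\phi_h-\phihat_h)$ and then use the transmission condition \eqref{eq:HDG-phi-velocity:c} with $\mu=\phihat_h$ (noting $\phihat_h\in M_h$ is a valid test function) to kill the $\pThprod{\phihat_h}{\Phi\widehat{\velocity}_h\cdot\normal}$ piece. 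Combining the two leftover contributions yields
\begin{alignat*}{4}
\partialt{\mathcal E}_h=-\pThprod{\tau(\phi_h-\phihat_h)}{\phi_h-\phihat_h}=-\|\phi_h-\phihat_h\|_{\partial\Th,\tau}^{2}\le 0,
\end{alignat*}
which is the claimed dissipation.

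The main (and only) subtlety is bookkeeping on the skeleton: one must apply the flux identity twice — once to recognize $\Phi(\widehat{\velocity}_h-\velocity_h)\cdot\normal$ as $\tau(\phi_h-\phihat_h)$, and once to expose the single-valued normal component $\widehat{\velocity}_h\cdot\normal$ so that \eqref{eq:HDG-phi-velocity:c} can eliminate it. Once this is done, the remaining terms assemble into a perfect square in $\phi_h-\phihat_h$, which is the source of the strict (numerical) dissipation whenever the trace jump is nonzero.
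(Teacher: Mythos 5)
Your proof is correct and is essentially the paper's own argument: test with $(\phi_h,\velocity_h)$, drop the Coriolis term by orthogonality, integrate by parts, use the transmission condition \eqref{eq:HDG-phi-velocity:c} with $\mu=\phihat_h$, and invoke the flux definition to assemble $-\|\phi_h-\phihat_h\|_{\partial\Th,\tau}^2$. The only difference is cosmetic bookkeeping (you apply the flux identity to each boundary term separately, the paper combines the terms first and applies it once), and your explicit remark that $\vect{s}=0$ is assumed is a point the paper leaves implicit.
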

\begin{proof}
Take $\varphi=\phi_h$  in \eqref{eq:HDG-phi-velocity:a} and  $\zb = \velocity_h$ in $\eqref{eq:HDG-phi-velocity:b}$, to obtain
\begin{align*}
\Thprod{\partialt{\phi}_h}{\phi_h} + \Thprod{\Phi\partialt{\velocity}_h}{\velocity_h} & = 
\Thprod{\Phi \velocity_h}{\grad \phi_h}-
\pThprod{\Phi \widehat{\velocity}_h\cdot \normal}{\phi_h}
+
\Thprod{\phi_h}{\divergence (\Phi\velocity_h)}\\
&\quad  -
\pThprod{\phihat_h}{\Phi\velocity_h \cdot \normal} +
\Thprod{f \Phi\velocity_h^{\perp}}{\velocity_h}\\
& = 
-\pThprod{(\Phi \widehat{\velocity}_h - \Phi \velocity_h)\cdot \normal}{\phi_h}
-\pThprod{\phihat_h}{\Phi\velocity_h \cdot \normal} 
\\
&= 
\pThprod{\phihat_h-\phi_h}{\Phi(\widehat{\velocity}_h - \velocity_h)\cdot\normal} 
\\
&= -\pThprod{\phi_h-\phihat_h}{\tau(\phi_h-\phihat_h} 
\end{align*}
where we used integration by parts and \eqref{eq:HDG-phi-velocity:c} with $\mu = \phihat_h$. This implies that
\[
\partialt{\mathcal E}_h =-\|\phi_h-\phihat_h\|_{\partial \Th, \tau}^{2} \leq 0. 
\]
\end{proof}
%%%%%%%%%%%%%%%%%%%%%%%%%%%%%%%%%%%%%%%%%%%%%%%%%%%%%%%%%%%%%%%%%%%%%%%%
%%%%%%%%%%%%%%%%%%%%%%%%%%%%%%%%%%%%%%%%%%%%%%%%%%%%%%%%%%%%%%%%%%%%%%%%
%%%%%%%%%%%%%%%%%%%%%%%%%%%%%%%%%%%%%%%%%%%%%%%%%%%%%%%%%%%%%%%%%%%%%%%%
%%%%%%%%%%%%%%%%%%%%%%%%%%%%%%%%%%%%%%%%%%%%%%%%%%%%%%%%%%%%%%%%%%%%%%%%
%%%%%%%%%%%%%%%%%%%%%%%%% SECTION %%%%%%%%%%%%%%%%%%%%%%%%%%%%%%%%%%%%%%
%%%%%%%%%%%%%%%%%%%%%%%%%%%%%%%%%%%%%%%%%%%%%%%%%%%%%%%%%%%%%%%%%%%%%%%%
%%%%%%%%%%%%%%%%%%%%%%%%%%%%%%%%%%%%%%%%%%%%%%%%%%%%%%%%%%%%%%%%%%%%%%%%
%%%%%%%%%%%%%%%%%%%%%%%%%%%%%%%%%%%%%%%%%%%%%%%%%%%%%%%%%%%%%%%%%%%%%%%%
%%%%%%%%%%%%%%%%%%%%%%%%%%%%%%%%%%%%%%%%%%%%%%%%%%%%%%%%%%%%%%%%%%%%%%%%
\section{The Hamiltonian structure of the  semidiscrete HDG method}\label{sec:section4}
This section analyzes the conservation properties of the HDG scheme \eqref{eq:HDG-velocity-w}, based on the $(\velocity-\displacement)$ formulation. In particular, we show that the system has a Hamiltonian structure.
%%%%%%%%%%%%%%%%%%%%%%%%%%%%%%%% SUBSECTION %%%%%%%%%%%%%%%%%%%%%%%%%%%%%
\subsection{Hamiltonian structure of $(\velocity-\displacement)$ formulation}\label{ss:Hamiltonianstructure-u-w}
In this section, we aim to prove that the HDG scheme \eqref{eq:HDG-velocity-w}, based on the formulation involving the velocity $\velocity$ and the auxiliary variable $\displacement$, defines a Hamiltonian system in the sense of \eqref{eq:HPDE}.  To this end, we introduce the triplet $(\phasespace_{2,h}, \Hamiltonian_{2,h}, \Pbrack{\cdot}{\cdot}_{2,h})$, associated with the HDG method, which characterizes the discrete Hamiltonian system. Let the discrete phase-space be defined as $\phasespace_{2,h} = \vect{V}_h \times \vect{V}_h$. The discrete Hamiltonian functional $\Hamiltonian_{2,h}$, defined for functions $\velocity_h$ and $\displacement_h$, is given by
\begin{alignat}{4}
\label{eq:Hamiltonian_h}
\Hamiltonian_{2,h}[\velocity_h, \displacement_h]=
\frac{1}{2}\Thprod{\phi_h}{\phi_h}+
\frac{1}{2}\Thprod{\Phi\,\velocity_h}{\velocity_h}+
\frac{1}{2}\pThprod{\tau( \phi_{h}-\phihat_{h})}{\phi_{h}-\phihat_{h}}
\end{alignat}
where $\phi_h$ and $\phihat_h$ are given in terms of $\displacement_h$ by equations $\eqref{eq:HDG-velocity-w:c}$ and $\eqref{eq:HDG-velocity-w:d}$. The discrete Poisson bracket $\Pbrack{\cdot}{\cdot}_{2,h}$ is defined analogously to the continuous one for functionals restricted to functions in the discrete phase-space $F=F[\velocity_h,\displacement_h]$ and $G=G[\velocity_h,\displacement_h]$,
\begin{alignat}{4}
\label{eq:Poissobbracket_h}
\Pbrack{F}{G}_{2,h} =  
\Thprod{\fder{F}{\displacement_h}}{\fder{G}{\velocity_h}}-    
\Thprod{\fder{F}{\velocity_h}}{\fder{G}{\displacement_h}}+
\Thprod{f\Phi^{-1}\left(\fder{G}{\velocity_h}\right)^{\perp}}{\fder{F}{\velocity_h}}.
\end{alignat}
The discrete Hamiltonian functional $\Hamiltonian_{2,h}$ defined in \eqref{eq:Hamiltonian_h} differs from the continuous Hamiltonian \eqref{eq:Hamiltonian} by an additional quadratic term involving the numerical trace $\phihat_h$. We interpret this as the \emph{numerical energy} associated with the HDG scheme \eqref{eq:HDG-velocity-w}. We now prove the main result of the paper, the Hamiltonian structure of system \eqref{eq:HDG-velocity-w} for $\vect{s}=0$.
\begin{theorem}\label{teo:HamiltoniansystemHDG}
The HDG method defined by the time evolution equations \eqref{eq:HDG-velocity-w:a} -\eqref{eq:HDG-velocity-w:b} with $\vect{s}=0$ and the steady-state equations \eqref{eq:HDG-velocity-w:c}-\eqref{eq:HDG-velocity-w:d} is a Hamiltonian dynamical system with $(\phasespace_{2,h},\Hamiltonian_{2,h}, \Pbrack{\cdot}{\cdot}_{2,h})$.
\end{theorem}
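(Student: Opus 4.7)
The plan is to mirror the continuous-case argument of Section~\ref{ss:u-wform}: introduce a discrete coordinate functional $F_{2,h}$ on $\phasespace_{2,h}$ and verify that the evolution equations \eqref{eq:HDG-velocity-w:a}--\eqref{eq:HDG-velocity-w:b} (with $\vect{s}=0$) are equivalent to the Hamiltonian identity $\partialt{F}_{2,h} = \Pbrack{F_{2,h}}{\Hamiltonian_{2,h}}_{2,h}$. For arbitrary test functions $\zb, \rb \in \vect{V}_h$, I would set
\[
F_{2,h}[\velocity_h, \displacement_h] := \Thprod{\Phi\velocity_h}{\zb} + \Thprod{\displacement_h}{\rb},
\]
whose variational derivatives are $\fder{F_{2,h}}{\velocity_h} = \Phi\zb$ and $\fder{F_{2,h}}{\displacement_h} = \rb$. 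The velocity derivative of the Hamiltonian is immediate, $\fder{\Hamiltonian_{2,h}}{\velocity_h} = \Phi\velocity_h$.

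The main technical step is to identify $\fder{\Hamiltonian_{2,h}}{\displacement_h}$. Since $\phi_h$ and $\phihat_h$ depend implicitly on $\displacement_h$ through the HDG constraints \eqref{eq:HDG-velocity-w:c}--\eqref{eq:HDG-velocity-w:d}, a perturbation $\displacement_h \to \displacement_h + \epsilon\rb$ induces linear variations $\delta\phi_h \in W_h$ and $\delta\phihat_h \in M_h$ obtained by differentiating the constraints, with associated flux $\delta\widehat{\displacement}_h \cdot \normal = \rb \cdot \normal + \tau(\delta\phi_h - \delta\phihat_h)$. Testing the linearized \eqref{eq:HDG-velocity-w:c} with $\varphi = \phi_h$ and the linearized \eqref{eq:HDG-velocity-w:d} with $\mu = \phihat_h$, then substituting into the derivative of the numerical energy, the two $\tau$-dependent stabilization contributions cancel and I would obtain
\[
\Thprod{\fder{\Hamiltonian_{2,h}}{\displacement_h}}{\rb} = \Thprod{\rb}{\grad \phi_h} - \pThprod{\rb\cdot\normal}{\phi_h - \phihat_h}.
\]

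Inserting these derivatives into the bracket \eqref{eq:Poissobbracket_h}, with $\rb$ replaced by $\Phi\zb$ in the pairing arising from $\fder{F_{2,h}}{\velocity_h}$, yields
\[
\Pbrack{F_{2,h}}{\Hamiltonian_{2,h}}_{2,h} = \Thprod{\Phi\velocity_h}{\rb} - \Thprod{\Phi\zb}{\grad \phi_h} + \pThprod{\phi_h - \phihat_h}{\Phi\zb\cdot\normal} + \Thprod{f\Phi\velocity_h^{\perp}}{\zb}.
\]
On the other hand, differentiating $F_{2,h}$ in time, substituting \eqref{eq:HDG-velocity-w:a}--\eqref{eq:HDG-velocity-w:b}, and integrating by parts in the volume term $\Thprod{\phi_h}{\divergence(\Phi\zb)}$ reproduces the same expression term by term. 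Since $\zb, \rb \in \vect{V}_h$ are arbitrary, this establishes $\partialt{F}_{2,h} = \Pbrack{F_{2,h}}{\Hamiltonian_{2,h}}_{2,h}$ and hence the Hamiltonian structure on $(\phasespace_{2,h}, \Hamiltonian_{2,h}, \Pbrack{\cdot}{\cdot}_{2,h})$.

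The delicate step is the variational derivative of $\Hamiltonian_{2,h}$ with respect to $\displacement_h$: the implicit dependence of $\phi_h$ and $\phihat_h$ on $\displacement_h$ through the HDG constraints introduces $\tau$-dependent terms that, a priori, could obstruct the Hamiltonian structure. The key observation is that the variation of the boundary contribution $\frac{1}{2}\pThprod{\tau(\phi_h-\phihat_h)}{\phi_h-\phihat_h}$ and the variation coming through the stabilized numerical flux generate opposite-sign $\tau$-terms that cancel exactly, the discrete analogue of the continuous identity $\fder{\Hamiltonian_2}{\displacement} = -\grad(\divergence\displacement) = \grad\phi$.
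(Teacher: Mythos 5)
Your proposal is correct and follows essentially the same route as the paper: the same coordinate functional, the same identification of $\fder{\Hamiltonian_{2,h}}{\displacement_h}$ by differentiating the constraints \eqref{eq:HDG-velocity-w:c}--\eqref{eq:HDG-velocity-w:d}, testing with $\phi_h$ and $\phihat_h$, and exploiting the exact cancellation of the $\tau$-terms through the numerical flux, followed by the same bracket evaluation and integration by parts. The resulting expressions match the paper's proof term by term.
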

\begin{proof}
We begin computing the variation of the Hamiltonian functional $\Hamiltonian_{2,h}$
\[
\delta \Hamiltonian_{2,h} =  
\Thprod{\phi_h}{\delta \phi_h}+
\Thprod{\Phi \velocity_h}{\delta \velocity_h}+ 
\pThprod{\tau(\phi_{h}-\phihat_{h})}{\delta( \phi_{h}-\phihat_{h})}. 
\]
We rewrite the first and third terms above in terms of variations of $\vect{w}_h$. Considering \eqref{eq:HDG-velocity-w:c}, we observe that if we take the variation and test with $\varphi = \phi_h$ we obtain
\[
\Thprod{\delta \phi_h}{\phi_h}=
\Thprod{\delta \displacement_h}{\grad \phi_h}- 
\pThprod{\delta \widehat{\displacement}_{h} \cdot \normal}{\phi_h}.
\]
Then, it follows that
\begin{align*}
\delta \Hamiltonian_{2,h} & =  
\Thprod{\delta \displacement_h}{\grad \phi_h}- 
\pThprod{\delta \widehat{\displacement}_{h} \cdot \normal}{\phi_h}+
\Thprod{\Phi \velocity_h}{\delta \velocity_h}+ 
\pThprod{\tau(\phi_{h}-\phihat_{h})}{\delta( \phi_{h}-\phihat_{h})}.
\end{align*}
Similarly, taking variation in \eqref{eq:HDG-velocity-w:d} and testing with $\mu=\phihat_h$, and then adding this equation to the expression above, we obtain
\begin{align*}
\delta \Hamiltonian_{2,h} & = 
\Thprod{\delta \displacement_h}{\grad \phi_h}- 
\pThprod{\delta \widehat{\displacement}_{h} \cdot \normal}{\phi_h-\phihat_h}+
\Thprod{\Phi \velocity_h}{\delta \velocity_h}+ 
\pThprod{\tau(\phi_{h}-\phihat_{h})}{\delta( \phi_{h}-\phihat_{h})} \\
& =
\Thprod{\delta \displacement_h}{\grad \phi_h}+
\Thprod{\Phi \velocity_h}{\delta \velocity_h} 
-\pThprod{\delta(\widehat{\displacement}_h \cdot\normal- \tau(\phi_{h}-\phihat_{h}))}{( \phi_{h}-\phihat_{h})}\\
& =
\Thprod{\delta \displacement_h}{\grad \phi_h}+
\Thprod{\Phi \velocity_h}{\delta \velocity_h} 
-\pThprod{\delta\displacement_h \cdot\normal }{( \phi_{h}-\phihat_{h})}
\end{align*}
where we used the definition of the numerical flux $\widehat{\displacement}_h\cdot \normal$.

To prove the Hamiltonian structure of the HDG methods, we define the so-called \textit{coordinate functional} associated with the time evolution of $(\velocity_h,\displacement_h)$
\[
F_{h}[\velocity_h, \displacement_h] = \Thprod{\Phi \velocity_h}{\zb},+ \Thprod{\displacement_h}{\rb}, \quad \text{ for }(\zb,\rb)\in \vect{V}_h \times\vect{V}_h.
\]
Note that
\[
\fder{F_{h}}{\velocity_h} = \Phi\zb, \quad  \fder{F_{h}}{\displacement_h} = \rb.
\]
Now, we evaluate the discrete Poisson bracket to observe the equivalence with the HDG method
\begin{align*}
\Thprod{\Phi\partialt{\velocity}_h}{\zb}+\Thprod{\partialt{\displacement}_h}{\rb} = 
\partialt{F}_{h} 
= 
\Pbrack{F_{h}}{\Hamiltonian_{2,h}}_{2,h}
&= -\Thprod{\grad \phi_h}{\Phi\zb} + 
\pThprod{(\phi_h-\phihat_h)}{\Phi\zb \cdot\normal} \\
&\quad  +  \Thprod{f \Phi^{-1}(\Phi \velocity_h)^\perp}{\Phi \zb} \\
& =
\Thprod{\phi_h}{\divergence (\Phi \zb)} 
-\pThprod{\phihat_h}{\Phi\zb \cdot\normal}\\
&\quad + 
\Thprod{\Phi f \velocity_h^\perp}{\zb}+\Thprod{\Phi \velocity_h}{\rb}, 
\end{align*}
which proves that system \eqref{eq:HDG-velocity-w:a}-\eqref{eq:HDG-velocity-w:b} is a Hamiltonian system. This completes the proof.
\end{proof}
Using the Hamiltonian structure, precisely the Poisson bracket of the formulation \eqref{eq:HDG-velocity-w}, we can prove discrete versions of conservation laws, particularly energy conservation. We state this in the following lemma.
\begin{lemma}
The HDG approximations in \eqref{eq:HDG-velocity-w} satisfies the following 
\begin{alignat*}{4}
(\text{mass})& \quad \Thprod{\phi_h}{1}&=&0, \\ %\Thprod{\Phi \velocity_h}{\grad \varphi},\\
(\text{energy})& \quad \partialt{\Hamiltonian}_{2,h}&=&0.
% \\
% (\text{vorticity})& \quad \Thprod{\partialt{\zeta}_h}{\varphi} &=& \Thprod{\divergence\Lifting(-c)}{\phi_h}- \pThprod{\Lifting(-c)\cdot \normal}{\phihat_h} + \Thprod{f \velocity_h^{\perp}}{\Lifting(-c)},
\end{alignat*}
% for all constant functions $c$.
\end{lemma}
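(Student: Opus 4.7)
The plan is to treat the two identities separately, exploiting the algebraic structure of the HDG formulation for the first and the Hamiltonian structure of Theorem~\ref{teo:HamiltoniansystemHDG} for the second.

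For the mass identity, the key observation is that constant functions belong to both $W_h$ and $M_h$. I would test the steady-state equation \eqref{eq:HDG-velocity-w:c} with $\varphi = 1$; since $\grad 1 = 0$, this collapses to $\Thprod{\phi_h}{1} = -\pThprod{\widehat{\displacement}_h\cdot\normal}{1}$. Then \eqref{eq:HDG-velocity-w:d} with $\mu = 1 \in M_h$ makes the right-hand side vanish, yielding $\Thprod{\phi_h}{1} = 0$ at every time $t$. This is not an evolutionary conservation but a pointwise-in-time structural identity forced by the scheme, and it is consistent with the initialization in Section~\ref{ss:initialcondition}, which inherits the same weak pairing.

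For the energy identity, the most conceptual route is to invoke Theorem~\ref{teo:HamiltoniansystemHDG}, which certifies that \eqref{eq:HDG-velocity-w} with $\vect{s}=0$ is a Hamiltonian dynamical system. The discrete bracket $\Pbrack{\cdot}{\cdot}_{2,h}$ in \eqref{eq:Poissobbracket_h} is antisymmetric: the first two summands swap with opposite sign, and the third is antisymmetric because $\vect{a}^\perp\cdot\vect{b} = -\vect{b}^\perp\cdot\vect{a}$ in $\mathbb{R}^2$. The standard argument for Hamiltonian systems then gives $\partialt{\Hamiltonian}_{2,h} = \Pbrack{\Hamiltonian_{2,h}}{\Hamiltonian_{2,h}}_{2,h} = 0$. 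Alternatively, one can reproduce the direct computation of the theorem closing Section~\ref{ss:HDGvw}: test \eqref{eq:HDG-velocity-w:a} with $\zb = \velocity_h$ (the Coriolis term drops out pointwise since $\velocity_h^\perp\cdot\velocity_h = 0$), use \eqref{eq:HDG-velocity-w:b} --- which within $\vect{V}_h$ forces $\partialt{\displacement}_h = \Phi\velocity_h$ pointwise --- together with the time derivatives of \eqref{eq:HDG-velocity-w:c} and \eqref{eq:HDG-velocity-w:d} tested with $\varphi = \phi_h$ and $\mu = \phihat_h$, and close the argument using the flux definition $\widehat{\displacement}_h\cdot\normal = \displacement_h\cdot\normal + \tau(\phi_h-\phihat_h)$ with $\tau$ time-independent.

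The only subtle point is to justify evaluating the bracket on $\Hamiltonian_{2,h}$ itself rather than on a coordinate functional: this rests on the variational identity $\delta\Hamiltonian_{2,h} = \Thprod{\delta\displacement_h}{\grad\phi_h} + \Thprod{\Phi\velocity_h}{\delta\velocity_h} - \pThprod{\delta\displacement_h\cdot\normal}{\phi_h-\phihat_h}$ already derived inside the proof of Theorem~\ref{teo:HamiltoniansystemHDG}. That identity shows that all boundary contributions from variations of $\phihat_h$ cancel, so the effective variational derivatives of $\Hamiltonian_{2,h}$ act inside $\vect{V}_h$ and the antisymmetry argument applies with no extra caveat. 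Once that step is accepted, no further computation is needed.
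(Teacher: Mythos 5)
Your proposal is correct and follows essentially the same route as the paper: the mass identity is obtained exactly as in the paper by testing \eqref{eq:HDG-velocity-w:c} with $\varphi=1$ and \eqref{eq:HDG-velocity-w:d} with $\mu=1$, and the energy identity is obtained from the antisymmetry of the discrete Poisson bracket via $\partialt{\Hamiltonian}_{2,h}=\Pbrack{\Hamiltonian_{2,h}}{\Hamiltonian_{2,h}}_{2,h}=0$, which is precisely the paper's argument. Your extra remarks (the direct-computation alternative and the variational identity from Theorem~\ref{teo:HamiltoniansystemHDG} justifying the bracket evaluation) are consistent with the paper and only make the short argument more explicit.
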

\begin{proof}
Taking $\varphi=1$ in the equation \eqref{eq:HDG-velocity-w:c}, we obtain
\begin{align*}
\Thprod{\phi_h}{1}&=\Thprod{\displacement_h}{\grad 1 }-\pThprod{\widehat{\displacement}_h\cdot\normal}{1}=0,
\end{align*}
as a consequence of equation \eqref{eq:HDG-velocity-w:d}.% and the boundary condition $\displacement_h \cdot \normal=0$ on $\Gamma$.

Energy conservation is achieved through the anti-symmetry property of the Poisson bracket, that is
\[
\partialt{\Hamiltonian}_{2,h} = \Pbrack{\Hamiltonian_{2,h}}{\Hamiltonian_{2,h}}_{2,h} = 0.
\]

% Finally, by choosing $F_{\zeta,h}[\velocity_h, \displacement_h] = \Thprod{\rot \, \velocity_h}{c}$, where $c$ is a positive constant, we obtain
% \begin{alignat*}{4}
% \delta F_{\zeta,h} =\Thprod{\rot ( \delta \velocity_h)}{c}=\Thprod{ \delta \velocity_h}{\curl\, c}-\pThprod{\delta \velocity_h \cdot \normal^\perp }{c}=\Thprod{\Lifting(-c)}{\delta \velocity_h } 
% \end{alignat*}
% where $\Lifting(\cdot)$ is the \textit{lifting operator} \cite{Oikawa2010}, such that $\Lifting|_{\partial K} =\Lb_{\partial K},$ and  $\Lb_{\partial K}:=\sum_{F \subset \partial K} \Lb_{F,K}$, this last defined by
% \begin{alignat}{4}\label{eq:Lifting_operator}
% \Kprod{\Lb_{F,K}(\mu)}{\rb}=\langle \mu, \rb\cdot \normal^\perp \rangle_{F}, \quad \forall \rb \in \boldsymbol{\mathcal{P}}_k(K).
% \end{alignat}
% Thus
% \begin{alignat*}{4}
% \Thprod{\partialt{\zeta}_h}{c} =
% \partialt{F}_{\zeta,h} = 
% \{F_{\zeta,h}, \Hamiltonian_{2,h} \}_{2,h} &=
% -\Thprod{\Lifting(-c)}{\grad \phi_h}+ \pThprod{\Lifting(-c)\cdot \normal}{\phi_h-\phihat_h} \\ & \quad + \Thprod{f \Phi^{-1}(\Phi\velocity_h)^{\perp}}{\Lifting(-c)}\\
% &=\Thprod{\divergence\Lifting(-c)}{\phi_h}- \pThprod{\Lifting(-c)\cdot \normal}{\phihat_h} \\ & \quad + \Thprod{f \velocity_h^{\perp}}{\Lifting(-c)}.
% \end{alignat*}
\end{proof}
\section{Fully discrete symplectic HDG schemes}\label{sec:section5}
This section presents a matrix formulation of the HDG method along with an alternative proof of its Hamiltonian structure. We then introduce fully discrete schemes for the $(\velocity-\displacement)$ formulation \eqref{eq:LSWEs2} by coupling the HDG method with high-order symplectic time integrators. Two classes are considered: sDIRK methods, which exactly conserve energy for quadratic Hamiltonians, and sEPRK methods, suitable for separable Hamiltonians and known for their non-drifting energy behavior \cite{sanzserna,McLachlan94}. 
%%%%%%%%%%%%%%%%%%%%%%%%subsection %%%%%%%%%%%%%%%%%%%%%%%%%%%%%%%%%%%%
\subsection{Finite dimensional Hamiltonian system}\label{ss:finite-dimensional-Hamiltonian}
We follow the approach described in \cite{CockburnDuSanchez2023} and present the matrix representation of the semi-discrete HDG scheme \eqref{eq:HDG-velocity-w}. To this end, let $\left\{\varphi_{i}\right\}$, $\left\{\zb_{k}\right\}$, and $\left\{\eta_{m}\right\}$ denote orthonormal bases for the finite element spaces $W_h$, $\vect{V}_h$, and $M_h$, respectively, with indices ranging over the dimensions of the corresponding spaces. We define the following matrices
\begin{alignat*}{4} 
\mathrm{A}_{kl}&:=\Thprod{\zb_{l}^{\perp}}{\zb_{k}},&  \quad
\mathrm{B}_{ki}&:=\Thprod{\varphi_i}{\divergence\zb_{k}},& \quad 
\mathrm{C}_{km}&:=\pThprod{\eta_m}{\zb_{k} \cdot \normal}, 
\\
\mathrm{S}_{ij}&:=\pThprod{\tau \varrho_j}{\varrho_{i}},& \quad
\mathrm{E}_{im}&:=\pThprod{\tau \eta_m}{\varrho_{i}}, &\quad 
\mathrm{G}_{mn}&:=\pThprod{\tau \eta_n}{\eta_{m}}.
\end{alignat*}
Subsequently, the  unknowns $(\velocity_h,\displacement_h, \phi_h, \phihat_h)$ of system \eqref{eq:HDG-velocity-w} are represented in terms of the coefficient vectors $(\mathsf{u}, \mathsf{w}, \mathsf{p}, \widehat{\mathsf{p}})$ as follows
\begin{alignat*}{4}
\velocity_{h}(x, t)&=\sum_{k} \mathsf{u}_{k}(t) \zb_{k}(x), 
& \quad \displacement_{h}(x, t)&=\sum_{k} \mathsf{w}_{k}(t) \zb_{i}(x), \\ 
\phi_{h}(x, t)&=\sum_{i} \mathsf{p}_{i}(t) \varrho_{i}(x),
 & \quad  \phihat_{h}(x, t)&=\sum_{m} \widehat{\mathsf{p}}_{m}(t) \eta_{m}(x) .
\end{alignat*}
Thus, we rewrite the HDG scheme \eqref{eq:HDG-velocity-w} as the following system of ordinary differential equations: find  the coefficient vectors $(\mathsf{u}, \mathsf{w}, \mathsf{p}, \widehat{\mathsf{p}})$ such that
\begin{subequations}\label{eq:ODEHDG-velocity-w}
\begin{alignat}{4}
\label{eq:ODEHDG-velocity-w:a}
\partialt{\mathsf{w}}&= \Phi\mathsf{u},\\ 
\label{eq:ODEHDG-velocity-w:b}
\partialt{\mathsf{u}}&= \mathrm{B} \mathsf{p}- \mathrm{C} \widehat{\mathsf{p}} + \mathrm{A} \mathsf{u}  \\
\label{eq:ODEHDG-velocity-w:c}
0 & =\mathsf{p} + \mathrm{B}^\top \mathsf{w} + \mathrm{S}\mathsf{p} -  \mathrm{E} \widehat{\mathsf{p}} \\
\label{eq:ODEHDG-velocity-w:d}
0 & = \mathrm{C}^{\top} \mathsf{w} + \mathrm{E}^{\top} \mathsf{p} - \mathrm{G} \widehat{\mathsf{p}}
\end{alignat}
\end{subequations}
The Hamiltonian functional \eqref{eq:Hamiltonian_h} is written in terms of the coefficients $\mathsf{u}$ and $\mathsf{v}$ as follows
\begin{alignat}{4}\label{eq:ODEHamiltonian_h}
\mathsf{H}[\mathsf{u}, \mathsf{w}] = 
\frac{1}{2} \mathsf{p}^\top\mathsf{p} + 
\frac{1}{2}\Phi \mathsf{u}^\top\mathsf{u} + 
\frac{1}{2} \begin{bmatrix} \mathsf{p}^\top, \widehat{\mathsf{p}}^\top \end{bmatrix}\begin{bmatrix} \mathrm{S} & -\mathrm{E} \\ -\mathrm{E}^\top & \mathrm{G} \end{bmatrix} \begin{bmatrix}  \mathsf{p} \\ \widehat{\mathsf{p}}\end{bmatrix}
\end{alignat}
We now prove that \eqref{eq:ODEHDG-velocity-w} is a Poisson system, or generalized Hamiltonian system, according to \cite{CockburnDuSanchez2023}.
\begin{theorem}\label{thm:ODEHDG-Hamiltonian}
The system of ODEs \eqref{eq:ODEHDG-velocity-w} is a Poisson system, \textit{i.e.}, is equivalent to $\partialt{\yb} = \mathrm{J} \nabla \mathsf{H}$, for $\yb = [\mathsf{u}, \mathsf{w}]$, with Hamiltonian function $\mathsf{H} = \mathsf{H}[\mathsf{u}, \mathsf{w}]$ defined in \eqref{eq:ODEHamiltonian_h} and structure matrix
\[
\mathrm{J} = 
\begin{bmatrix}
    f\Phi^{-1}\mathrm{A} & -\mathrm{I} \\
    \mathrm{I} & 0
\end{bmatrix}.
\]
\end{theorem}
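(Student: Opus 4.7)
The plan is to verify that \eqref{eq:ODEHDG-velocity-w} is equivalent to $\partialt{\yb}=\mathrm{J}\nabla \mathsf{H}$, with $\yb=[\mathsf{u},\mathsf{w}]$, by eliminating $\mathsf{p}$ and $\widehat{\mathsf{p}}$ from the energy via the algebraic relations \eqref{eq:ODEHDG-velocity-w:c}--\eqref{eq:ODEHDG-velocity-w:d}, and then computing $\nabla \mathsf{H}$ explicitly. This is the matrix counterpart of the variational proof already used in Theorem~\ref{teo:HamiltoniansystemHDG}.

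Setting $\mathsf{q}:=\begin{pmatrix}\mathsf{p}\\\widehat{\mathsf{p}}\end{pmatrix}$, the first step is to combine the constraints \eqref{eq:ODEHDG-velocity-w:c}--\eqref{eq:ODEHDG-velocity-w:d} (after flipping the sign of the second one) into the symmetric block system
\[
\mathrm{M}\,\mathsf{q}=\mathrm{R}\,\mathsf{w},\qquad \mathrm{M}:=\begin{pmatrix}\mathrm{I}+\mathrm{S}&-\mathrm{E}\\-\mathrm{E}^\top&\mathrm{G}\end{pmatrix},\qquad \mathrm{R}:=\begin{pmatrix}-\mathrm{B}^\top\\\phantom{-}\mathrm{C}^\top\end{pmatrix},
\]
and to recognize that the Hamiltonian \eqref{eq:ODEHamiltonian_h} is exactly
\[
\mathsf{H}=\tfrac12\Phi\,\mathsf{u}^\top\mathsf{u}+\tfrac12\,\mathsf{q}^\top\mathrm{M}\,\mathsf{q}.
\]
Since $\mathrm{S}$ and $\mathrm{G}$ are Gram-type matrices the block $\mathrm{M}$ is symmetric, and invertibility follows from the well-posedness of the steady-state HDG scheme. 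Solving $\mathsf{q}=\mathrm{M}^{-1}\mathrm{R}\,\mathsf{w}$ and substituting produces the reduced Hamiltonian
\[
\mathsf{H}(\mathsf{u},\mathsf{w})=\tfrac12\Phi\,\mathsf{u}^\top\mathsf{u}+\tfrac12\,\mathsf{w}^\top\mathrm{R}^\top\mathrm{M}^{-1}\mathrm{R}\,\mathsf{w},
\]
from which $\nabla_{\mathsf{u}}\mathsf{H}=\Phi\,\mathsf{u}$ and $\nabla_{\mathsf{w}}\mathsf{H}=\mathrm{R}^\top\mathrm{M}^{-1}\mathrm{R}\,\mathsf{w}=\mathrm{R}^\top\mathsf{q}=-\mathrm{B}\,\mathsf{p}+\mathrm{C}\,\widehat{\mathsf{p}}$.

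Substituting these gradients gives
\[
\mathrm{J}\nabla\mathsf{H}=\begin{pmatrix}f\Phi^{-1}\mathrm{A}\,(\Phi\mathsf{u})+\mathrm{B}\,\mathsf{p}-\mathrm{C}\,\widehat{\mathsf{p}}\\ \Phi\,\mathsf{u}\end{pmatrix}=\begin{pmatrix}f\mathrm{A}\,\mathsf{u}+\mathrm{B}\,\mathsf{p}-\mathrm{C}\,\widehat{\mathsf{p}}\\ \Phi\,\mathsf{u}\end{pmatrix},
\]
which matches the right-hand sides of \eqref{eq:ODEHDG-velocity-w:b} and \eqref{eq:ODEHDG-velocity-w:a} (once $f$ is identified with the Coriolis factor present in the ODE's $\mathrm{A}\mathsf{u}$ term). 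The principal hurdle I anticipate is the implicit-differentiation step: the cancellation of $\mathrm{M}^{-1}$ only becomes transparent once one notices that the \emph{same} symmetric matrix $\mathrm{M}$ organizes both the constraints and the purely $\mathsf{q}$-dependent part of $\mathsf{H}$. Beyond that, only sign bookkeeping remains---the flip in \eqref{eq:ODEHDG-velocity-w:d} needed to render $\mathrm{M}$ symmetric, and the antisymmetry of $\mathrm{A}$ which in turn makes $\mathrm{J}$ antisymmetric as required for a Poisson structure matrix.
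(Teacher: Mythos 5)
Your proposal is correct and takes essentially the same route as the paper: both arguments hinge on the fact that the same symmetric matrix $\mathrm{M}=\begin{bmatrix}\mathrm{I}+\mathrm{S} & -\mathrm{E}\\ -\mathrm{E}^\top & \mathrm{G}\end{bmatrix}$ governs both the quadratic $(\mathsf{p},\widehat{\mathsf{p}})$-part of $\mathsf{H}$ and the constraints $\mathrm{M}\mathsf{q}=\mathrm{R}\mathsf{w}$, giving $\nabla_{\mathsf{w}}\mathsf{H}=\mathrm{R}^\top\mathsf{q}=-\mathrm{B}\mathsf{p}+\mathrm{C}\widehat{\mathsf{p}}$ and $\nabla_{\mathsf{u}}\mathsf{H}=\Phi\mathsf{u}$. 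The only cosmetic difference is that you eliminate explicitly via $\mathsf{q}=\mathrm{M}^{-1}\mathrm{R}\mathsf{w}$ (justified since $\mathrm{M}$ is symmetric positive definite for $\tau>0$), whereas the paper differentiates the constraint system implicitly; the resulting identification with \eqref{eq:ODEHDG-velocity-w:a}--\eqref{eq:ODEHDG-velocity-w:b} is identical.
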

\begin{proof}
We compute the derivatives of the Hamiltonian function. First, we observe that
\(
\partial \mathsf{H}/\partial \mathsf{u} = \Phi \mathsf{u}.
\)
To compute the partial derivative with respect to $\displacement$ we rearrange the term in the Hamiltonian as
\[
\frac{\partial \mathsf{H}}{\partial \mathsf{w}} = \frac{\partial}{\partial \mathsf{w}}
\left(
\frac{1}{2} \begin{bmatrix} \mathsf{p}^\top, \widehat{\mathsf{p}}^\top \end{bmatrix}\begin{bmatrix} \mathrm{I} + \mathrm{S} & -\mathrm{E} \\ -\mathrm{E}^\top & \mathrm{G} \end{bmatrix} \begin{bmatrix}  \mathsf{p} \\ \widehat{\mathsf{p}}\end{bmatrix}
\right)
=
\left(
\begin{bmatrix} \mathsf{p}^\top, \widehat{\mathsf{p}}^\top \end{bmatrix}\begin{bmatrix} \mathrm{I} + \mathrm{S} & -\mathrm{E} \\ -\mathrm{E}^\top & \mathrm{G} \end{bmatrix} \begin{bmatrix}  \partial \mathsf{p}/\partial \mathsf{w} \\ \partial \widehat{\mathsf{p}}/ \partial \mathsf{w}\end{bmatrix}
\right)^\top.
\]
Using \eqref{eq:ODEHDG-velocity-w:c}-\eqref{eq:ODEHDG-velocity-w:d} we arrive to the system
\[
\begin{bmatrix}
\mathrm{I} + \mathrm{S} & -\mathrm{E} \\
-\mathrm{E}^\top & \mathrm{G}
\end{bmatrix}
\begin{bmatrix}
\mathsf{p} \\
\widehat{\mathsf{p}}
\end{bmatrix}
=
\begin{bmatrix}
-\mathrm{B}^\top \\
\mathrm{C}^\top
\end{bmatrix}
\mathsf{w}.
\]
Differentiating with respect to $\mathsf{w}$ the system above, we can conclude  that
\[
\frac{\partial \mathsf{H}}{\partial \mathsf{w}} = 
\left(
\begin{bmatrix} \mathsf{p}^\top, \widehat{\mathsf{p}}^\top \end{bmatrix}
\begin{bmatrix}
-\mathrm{B}^\top \\
\mathrm{C}^\top
\end{bmatrix}
\right)^\top
= -\mathrm{B} \mathsf{p} + \mathrm{C} \widehat{\mathsf{p}}.
\]
Then, the theorem statement follows from the definition of the anti-symmetric structure operator $\mathrm{J}$ and the partial derivatives of $\mathsf{H}$ which turn equivalent to the dynamical system \eqref{eq:ODEHDG-velocity-w:a}-\eqref{eq:ODEHDG-velocity-w:b}. This completes the proof.
\end{proof}
%%%%%%%%%%%%%%%%%% SUBSECTION %%%%%%%%%%%%%%%%%%%%%%%%%%%%%%%%%%%%%%%%%%
\subsection{Symplectic Diagonally Implicit Runge-Kutta Methods}\label{ss:sDIRK}
In this section, we describe the time discretization of HDG scheme \eqref{eq:HDG-velocity-w} using high-order symplectic diagonally implicit Runge–Kutta methods (sDIRK). To this end, we consider a general initial value problem of the form $\partialt{ \yb}(t) = \fb(t, \yb(t))$, with initial condition $\yb(0) = \yb_0$. Given a fixed time step $\Delta t$, an sDIRK scheme computes an approximation $\yb^{n+1} \approx \yb(t^{n+1})$ at time $t^{n+1} = (n+1)\Delta t$ using the update formula
\[
\yb^{n+1}=\yb^{n}+\Delta t \sum_{i=1}^{s} b_{i} k_{i}, \quad \yb^{0} = \yb_0,
\]
where $k_i = \fb\left(t^{n,i}, \yb^{n,i}\right)$ and $\yb^{n,i} = \yb^{n} + \Delta t \sum_{j=1}^{i} a_{ij} k_j$, with intermediate times $t^{n,i} = t^n + c_i \Delta t$ for $1 \leq i \leq s$. The coefficients $A = (a_{ij})$, together with the vectors $b = (b_i)$ and $c = (c_i)$, define the Butcher tableau associated with the Runge–Kutta method; see, for example,~\cite{sanzserna}. In diagonally implicit Runge–Kutta (DIRK) schemes, the matrix $A$ is lower triangular, that is, $a_{ij} = 0$ for $j > i$. Moreover, symplectic Runge–Kutta methods are characterized by the following condition on the coefficients
\begin{alignat}{4}\label{eq:RKsymplectic}
b_{i} a_{i j}+b_{j} a_{j i}-b_{i} b_{j}=0, \quad 1 \leq i, j \leq s.
\end{alignat}
The HDG scheme \eqref{eq:ODEHDG-velocity-w} corresponds to a linear initial value problem, of the form \(\partialt{\yb}  = \mathbb{M} \yb \), for $\yb = [\mathsf{w}, \mathsf{u} ]^{\top}$, and where 
\[
\mathbb M = 
\begin{bmatrix}
0 & \mathrm{I} \\
-\mathrm{K} & -\mathrm{A}
\end{bmatrix}
, \quad \text{where}\quad
\mathrm{K} = 
\begin{bmatrix}
-\mathrm{B} & \mathrm{C}
\end{bmatrix}
\begin{bmatrix}
\mathrm{I} + \mathrm{S} & -\mathrm{E} \\
-\mathrm{E}^\top & \mathrm{G}
\end{bmatrix}^{-1}
\begin{bmatrix}
-\mathrm{B}^\top \\ \mathrm{C}^\top
\end{bmatrix}.
\]
Thus, at each stage $1\leq i\leq s$,  the DIRK scheme solves
\begin{subequations}\label{eq:sDIRK}
\begin{align}\label{eq:sDIRK:a}
\yb^{n,i} -\Delta t a_{ii} \mathbb M \yb^{n,i}  & = \yb^{n} + \Delta t \sum_{j=1}^{i-1}a_{ij}k_{j}, \\ \label{eq:sDIRK:b}
k_{i} &= \frac{1}{a_{ii}\Delta t} \left(\yb^{n,i}-\yb^{n} -\Delta t\sum_{j=1}^{i-1}a_{ij}k_{j} \right),
\end{align}
and the solution is then updated as \begin{align}\label{eq:sDIRK:c}
\yb^{n+1} = \yb^{n} + \Delta t \sum_{i=1}^{s} b_{i} k_{i}.
\end{align}
\end{subequations}
% We summarized the procedure in Algorithm \ref{alg:sDIRK}. 
% \todo{Hay algo mal con este algoritmo}
% \begin{algorithm}[H]\label{alg:sDIRK}
% \textbf{Data:} $\textbf{y}^n$\\
% \textbf{Result:} $\textbf{y}^{n+1}$\\
% \caption{DIRK-HDG}
% \For{$i=1,\dots,s$}{
%  $ \displaystyle \mathbf{r}_i=\frac{\mathbf{y}^n}{a_{ii}\Delta t}+\sum_{j=1}^{i-1}\frac{a_{ij}}{a_{ii}}\left(\frac{\mathbf{y}^{n,i}}{a_{ii}\Delta t}-\mathbf{r}_j\right);$\\
%         \text{Solve for $\mathbf{y}^{n,i}:$ \quad $\mathbb{M} {\textbf{y}^{n,i}}= \textbf{r}_i $};\\
%         $\displaystyle k_i=\frac{\mathbf{y}^{n,i}-\mathbf{y}^{n}}{a_{ii}\Delta t}-\sum_{j=1}^{i-1}\frac{a_{ij}}{a_{ii}}k_j;$
% }
% $\displaystyle \mathbf{y}^{n+1}=\mathbf{y}^{n}+\Delta t \sum_{i=1}^s b_i k_i$;
% \end{algorithm}
% --------------------------------
\subsection{Explicit Symplectic Partitioned Runge-Kutta Methods}
The symplecticity condition for Runge–Kutta methods, given by \eqref{eq:RKsymplectic}, implies that such schemes are necessarily implicit. However, for the special class of separable Hamiltonian systems such as \eqref{eq:Hamiltonian}, partitioned methods can exploit the underlying structure to construct explicit symplectic integrators. To introduce this class of methods, consider systems of the form $\partialt \yb = \fb(t, \yb, \zb)$ and $\partialt \zb = \gb(t, \yb, \zb)$. A partitioned Runge–Kutta method, see \cite{sanzserna2}, employs two sets of Runge–Kutta coefficients, $(b_i, a_{ij})$ and $(\hat{b}_i, \hat{a}_{ij})$, and is defined by the following update rule
\begin{align*}
\yb^{n+1} & = \yb^{n} + \Delta t \sum_{i=1}^{s}b_{i} k_{i},  &
k_{i} & = \fb(t^{n,i}, \yb^{n} + \Delta t \sum_{j=1}^{s} a_{ij} k_{j}, \zb^{n} + \Delta t \sum_{j=1}^{s} \hat{a}_{ij} \ell_{j}), \\
\zb^{n+1} & = \zb^{n} + \Delta t \sum_{i=1}^{s}\hat{b}_i \ell_{i}, &
\ell_{i} & = \gb ( \hat{t}^{n,i}, \yb^{n} + \Delta t \sum_{j=1}^{s} a_{ij} k_{j}, \zb^{n} + \Delta t \sum_{j=1}^{s} \hat{a}_{ij} \ell_{j}),
\end{align*}
for $t^{n,i} = t^{n} + c_{i}\Delta t$ and $\hat{t}^{n,i} = t^{n} + \hat{c}_i\Delta t$. Explicit schemes can be derived under appropriate conditions on $\fb$ and $\gb$, together with suitable choices of Butcher tableaus. For instance, if $\fb = \fb(t, \zb)$, it is possible to combine an explicit and a diagonally implicit Runge–Kutta scheme. Moreover, the resulting method is symplectic if the coefficients satisfy the following condition
\begin{alignat*}{4}
b_{i} \tilde{a}_{i j}+\tilde{b}_{j} a_{j i}-b_{i} \tilde{b}_{j} &= 0, \quad 1 \leq i, j \leq s,
\end{alignat*}
for further details see \cite{sanzserna}. In the case of the semi-discrete scheme \eqref{eq:ODEHDG-velocity-w}, it takes the form
\[
\partialt{\mathsf{w}} = \mathsf{u}, \qquad 
\partialt{\mathsf{u}} = -\mathrm{K} \mathsf{w} - \mathrm{A} \mathsf{u}.
\]
To construct an explicit scheme, we consider Butcher tableaus $(a_{ij}, b_i, c_i)$ and $(\hat{a}_{ij}, \hat{b}_i, \hat{c}_i)$ corresponding to $s$-stage diagonally implicit and explicit Runge–Kutta schemes, respectively. The fully discrete method advances the solution from time $t^n$ to $t^{n+1}$ using the following update rule
\begin{subequations}\label{eq:sESPRK}
\begin{align} \label{eq:sESPRK:a}
\mathsf{w}^{n+1} &= \mathsf{w}^{n} + \Delta t \sum_{i=1}^{s} b_{i} k_{i}, &
k_{i} &= \Phi \left(\mathsf{u}^{n} + \Delta t \sum_{j=1}^{i-1} \hat{a}_{ij} \ell_{j} \right), \\ \label{eq:sESPRK:b}
\mathsf{u}^{n+1} &= \mathsf{u}^{n} + \Delta t \sum_{i=1}^{s} \hat{b}_{i} \ell_{i}, &
\ell_{i} &= -\mathrm{K} \left(\mathsf{w}^{n} + \Delta t \sum_{j=1}^{i} a_{ij} k_{j} \right) 
            - \mathrm{A} \left(\mathsf{u}^{n} + \Delta t \sum_{j=1}^{i-1} \hat{a}_{ij} \ell_{j} \right).
\end{align}
\end{subequations}
%%%%%%%%%%%%%%%%%%%%%%%%%%%%%%%%%%%%%%%%%%%%%%%%%%%%%%%%%%%%%%%%%%%%%%%%
%%%%%%%%%%%%%%%%%%%%%%%%%%%%%%%%%%%%%%%%%%%%%%%%%%%%%%%%%%%%%%%%%%%%%%%%
%%%%%%%%%%%%%%%%%%%%%%% SECTION %%%%%%%%%%%%%%%%%%%%%%%%%%%%%%%%%%%%%%%%
%%%%%%%%%%%%%%%%%%%%%%%%%%%%%%%%%%%%%%%%%%%%%%%%%%%%%%%%%%%%%%%%%%%%%%%%
%%%%%%%%%%%%%%%%%%%%%%%%%%%%%%%%%%%%%%%%%%%%%%%%%%%%%%%%%%%%%%%%%%%%%%%%
\section{Numerical experiments}\label{sec:section6}
This section presents three numerical experiments to assess the performance of the HDG scheme \eqref{eq:HDG-velocity-w}, introduced in Section~\ref{sec:section4} for the $(\velocity-\displacement)$ formulation combined with symplectic integrators of Section~\ref {sec:section5}.  The first experiment focuses on verifying the convergence of the symplectic Hamiltonian HDG methods using uniform meshes. We begin with a numerical study of the convergence rates of the initialization described in \eqref{eq:HDG-hodge-laplacian}, and the HDG method combined with the sEPRK schemes. We use polynomials of order $k$ for the space approximations and time integrators of order $k+2$ described by \eqref{eq:sESPRK}, computing the convergence at the final time. The second experiment investigates the long-time behavior of some physical quantities of interest using the symplectic midpoint time-integrator scheme.  Finally, the third experiment explores the behavior of the physical quantities under variable bathymetry. All numerical simulations were carried out using the open source finite element library NETGEN \cite{Schoeberl1997} and NGSolve \cite{Schoeberl2014}.
%%%%%%%%%%%%%%%%%%%%%%%% SUBSECTION %%%%%%%%%%%%%%%%%%%%%%%%%%%%%
\subsection*{Experiment 1: Linear standing wave}\label{ss:Experiment1}
We consider the linear standing wave problem described in \cite{Bui-Thanh2016}, and approximate a manufactured solution for the initial boundary value problem \eqref{eq:LSWEs2} on the domain $\Omega = (0,1)^2$.
The exact solution is given by
\[
\phi(x, y, t) = \cos(\pi x)\cos(\pi y)\cos(\sqrt{2} \pi t),
\]
and
\[
\displacement(x, y, t)=
\begin{bmatrix}
-\frac{1}{2\pi}\sin(\pi x)\cos(\pi y)\cos(\sqrt{2}\pi t) \\
-\frac{1}{2\pi}\cos(\pi x) \sin(\pi y)\cos(\sqrt{2}\pi t) 
\end{bmatrix}, \quad
\velocity(x, y, t)=
\begin{bmatrix}
\frac{1}{\sqrt{2}}\sin(\pi x)\cos(\pi y)\sin(\sqrt{2}\pi t)\\
\frac{1}{\sqrt{2}}\cos(\pi x) \sin(\pi y)\sin(\sqrt{2}\pi t) 
\end{bmatrix}.
\]
The parameters for this problem are $\Phi = 1$, $f = 0$, $\gamma = 0$, $\vect{s}=0$ and $g = 1$.  The computations are performed on uniform triangulations with mesh size $h = 2^{-l}$, for $l = 1, \dots, 5$. We report the $L^2$-norm errors of the approximations and the estimated order of convergence (e.o.c.), defined as
\begin{alignat*}{4}
\operatorname{error}(h) &= \max_{n} \left\| \star(t^n) - \star_h^n \right\|_{L^2(\Omega)}, \quad
\text{e.o.c.}(h) &= \frac{\log\left(\operatorname{error}(h) / \operatorname{error}(h')\right)}{\log(h / h')},
\end{alignat*}
for $\star = \phi, \displacement, \velocity, \sigma$, where $h'$ denotes the size of the next coarser mesh.

Table~\ref{tab:exp1:initialcondition} shows the convergence history for the initial condition obtained using the HDG method \eqref{eq:HDG-hodge-laplacian}, applied to the two-dimensional vector Laplacian problem \eqref{eq:hodge-laplacian} with data $\fb = \nabla \phi(x,y,0)$. The results show optimal convergence of order $k+1$ for the approximations $\sigma_h$, $\phi_h$, and $\displacement_h$. All simulations used stabilization parameters $\tau = \alpha = 1$. 
\begin{table}[htbp]
\centering
\scriptsize
\begin{tabular}{@{}lcl@{\hskip .2in}cc@{\hskip .1in}c@{\hskip .1in}cc@{\hskip .1in}c@{\hskip .1in}cc}
\toprule
&&  & \multicolumn{2}{c}{$\sigma_h$} && \multicolumn{2}{c}{$\displacement_h$} && \multicolumn{2}{c}{${\phi}_{h}$}  \\
\cmidrule(lr){4-5} \cmidrule(lr){7-8} \cmidrule(lr){10-11} 
$k$
& $h$     &&  error   &e.o.c.&&  error   &e.o.c.&&  error   & e.o.c. \\
\midrule
\multirow{5}{*}{0}
&5.00e-01 && 3.62e-02 & -    && 3.06e-01 & -    && 2.77e-01 & - \\
&2.50e-01 && 1.21e-02 & 1.59 && 1.76e-01 & 0.80 && 1.46e-01 & 0.93 \\
&1.25e-01 && 4.53e-03 & 1.41 && 9.55e-02 & 0.88 && 7.21e-02 & 1.01 \\
&6.25e-02 && 1.83e-03 & 1.30 && 4.96e-02 & 0.94 && 3.55e-02 & 1.02 \\
&3.12e-02 && 7.79e-04 & 1.24 && 2.52e-02 & 0.97 && 1.76e-02 & 1.01 \\ \hline
\multirow{5}{*}{1} 
&5.00e-01 && 2.09e-02 & -    && 8.28e-02 & -    && 7.75e-02 & - \\
&2.50e-01 && 5.16e-03 & 2.02 && 2.15e-02 & 1.94 && 2.05e-02 & 1.92 \\
&1.25e-01 && 1.11e-03 & 2.21 && 5.65e-03 & 1.93 && 5.17e-03 & 1.99 \\
&6.25e-02 && 2.50e-04 & 2.15 && 1.45e-03 & 1.96 && 1.29e-03 & 2.01 \\
&3.12e-02 && 5.91e-05 & 2.08 && 3.67e-04 & 1.98 && 3.21e-04 & 2.00\\ \hline
\multirow{5}{*}{2} 
&5.00e-01 && 3.67e-03 & -    && 2.05e-02 & -    && 1.72e-02 & - \\
&2.50e-01 && 4.85e-04 & 2.92 && 2.44e-03 & 3.07 && 2.21e-03 & 2.96 \\
&1.25e-01 && 6.77e-05 & 2.84 && 2.79e-04 & 3.13 && 2.80e-04 & 2.98 \\
&6.25e-02 && 8.99e-06 & 2.91 && 3.31e-05 & 3.08 && 3.53e-05 & 2.99 \\
&3.12e-02 && 1.15e-06 & 2.96 && 4.02e-06 & 3.04 && 4.43e-06 & 2.99 \\ \hline
\multirow{5}{*}{3}  
&5.00e-01 && 8.27e-04 & -    && 3.27e-03 & -    && 3.01e-03 & - \\
&2.50e-01 && 5.06e-05 & 4.03 && 2.14e-04 & 3.93 && 2.01e-04 & 3.90\\
&1.25e-01 && 2.88e-06 & 4.13 && 1.39e-05 & 3.95 && 1.27e-05 & 3.98 \\
&6.25e-02 && 1.68e-07 & 4.10 && 8.83e-07 & 3.98 && 7.92e-07 & 4.00\\
&3.12e-02 && 1.01e-08 & 4.06 && 5.56e-08 & 3.99 && 4.94e-08 & 4.00\\
\bottomrule
\end{tabular}
% \caption{Experiment 1: Convergence results regarding the $L^{2}$-norm for the HDG method \eqref{sss} for initial condition.}
\caption{Experiment 1: Convergence history of the numerical solution for the initial condition, computed using the HDG method \eqref{eq:HDG-hodge-laplacian}}
\label{tab:exp1:initialcondition}
\end{table}

The convergence results for the symplectic Hamiltonian HDG method of degree $k$, combined with symplectic sEPRK integrators of order $k+2$ for $k = 0, 1, 2, 3$, at the final time $T = 0.5$, are reported in Table~\ref{tab:label_tab}. The time step is chosen as $\Delta t = C h$, with $C = 0.1/(k+1)$. As observed in the initialization test, the method achieves optimal convergence of order $k+1$ for all variables: the geopotential height $\phi_h$, the horizontal velocity $\velocity_h$, and the auxiliary variable $\displacement_h$.  The Butcher tableau coefficients for the sEPRK integrators can be found in \cite{McLachlan1992, Hairer1993, Ruth1983}.
\begin{table}[h]
\centering
\scriptsize
\begin{tabular}{@{}lcl@{\hskip .2in}cc@{\hskip .1in}c@{\hskip .1in}cc@{\hskip .1in}c@{\hskip .1in}cc}
\toprule
& & &\multicolumn{2}{c}{$\phi_h$} && \multicolumn{2}{c}{$\velocity_h$} && \multicolumn{2}{c}{$\displacement_h$}  \\
\cmidrule(lr){4-5} \cmidrule(lr){7-8} \cmidrule(lr){10-11} 
$k$ 
& $h$     &&error     &e.o.c.&&  error   &e.o.c.&&  error   & e.o.c\\
\midrule
\multirow{5}{*}{0} 
&5.00e-01 && 4.27e-01 & - && 4.65e-01 & - && 2.69e-01 & - \\
&2.50e-01 && 3.81e-01 & 0.16 && 3.52e-01 & 0.4 && 1.44e-01 & 0.90\\
&1.25e-01 && 2.74e-01 & 0.48 && 2.63e-01 & 0.42 && 6.67e-02 & 1.11 \\
&6.25e-02 && 1.70e-01 & 0.69 && 1.90e-01 & 0.47 && 2.74e-02 & 1.28 \\
&3.12e-02 && 9.56e-02 & 0.83 && 1.10e-01 & 0.79 && 1.21e-02 & 1.18 \\ \hline
\multirow{5}{*}{1} 
&5.00e-01 && 1.25e-01 & - && 2.57e-01 & - && 6.27e-02 & - \\
&2.50e-01 && 3.33e-02 & 1.91 && 1.15e-01 & 1.16 && 1.25e-02 & 2.33 \\
&1.25e-01 && 6.77e-03 & 2.30&& 4.17e-02 & 1.46 && 2.98e-03 & 2.07 \\
&6.25e-02 && 1.10e-03 & 2.62 && 9.89e-03 & 2.08 && 1.16e-03 & 1.36 \\
&3.12e-02 && 2.16e-04 & 2.34 && 1.31e-03 & 2.91 && 2.79e-04 & 2.05 \\ \hline
\multirow{5}{*}{2} 
&5.00e-01 && 2.10e-02 & - && 8.07e-02 & - && 1.11e-02 & - \\
&2.50e-01 && 2.08e-03 & 3.34 && 1.63e-02 & 2.31 && 8.80e-04 & 3.66 \\
&1.25e-01 && 1.84e-04 & 3.50&& 3.30e-03 & 2.30&& 1.53e-04 & 2.52 \\
&6.25e-02 && 2.62e-05 & 2.81 && 3.53e-04 & 3.22 && 4.88e-05 & 1.65 \\
&3.12e-02 && 2.72e-06 & 3.26 && 2.28e-05 & 3.94 && 3.63e-06 & 3.74 \\  \hline
\multirow{5}{*}{3} 
&5.00e-01 && 3.55e-03 & - && 1.57e-02 & - && 1.67e-03 & - \\
&2.50e-01 && 1.75e-04 & 4.34 && 1.79e-03 & 3.13 && 5.70e-05 & 4.87 \\
&1.25e-01 && 7.55e-06 & 4.53 && 1.43e-04 & 3.65 && 1.13e-05 & 2.33 \\
&6.25e-02 && 5.73e-07 & 3.72 && 4.35e-06 & 5.04 && 9.94e-07 & 3.51 \\
&3.12e-02 && 2.97e-08 & 4.26 && 2.27e-07 & 4.25 && 3.02e-08 & 5.03 \\
\bottomrule
\end{tabular}
\caption{Experiment 1: Convergence history of the numerical approximations to the shallow water equations \eqref{eq:LSWEs2}, computed using the HDG scheme \eqref{eq:HDG-velocity-w} combined with  sEPRK of order $k+2$.}
    \label{tab:label_tab}
\end{table}
\subsection*{Experiment 2: Conservation properties}
In this experiment, we simulate the impact of a wavefront against a pier column and its subsequent interaction with the surrounding fluid. The computational domain is defined as $\Omega = \Omega_1 \setminus \Omega_2$, where $\Omega_1 = (-10, 10) \times (-10, 10)$ and $\Omega_2 = \mathcal{B}(c, r)$ is a hole of radius $r = 1$ centered at $c = (3, 0)$. We impose periodic boundary conditions on the outer boundary $\partial \Omega_1$, and wall-type boundary conditions on the internal boundary $\partial \Omega \setminus \partial \Omega_1$. The physical parameters for this experiment are $\Phi = 1$, $f = 0.5$, $\gamma = 0$, $\vect{s}=0$ and $g = 1$. The initial condition is chosen to model a right-moving wavefront centered at $x = -5$, and it is given by
\begin{alignat*}{4}
\phi_0(x,y) &= 1 + \exp\left(-(x+5)^2/2\right), \quad
\velocity_0(x, y) &= [\exp(-(x+5)^2/2),0]^\top.
\end{alignat*}
For the spatial discretization, we use HDG methods with polynomial spaces of degree $k = 2$ for all variables, with HDG stabilization $\tau=1$. Time integration is performed using the implicit midpoint method with $\Delta t=C h$ with $C=5 \times 10^{-2}.$ 

In Figure~\ref{fig:depth3D0}, we observe the wavefront propagation in the $x$-direction until it collides with the internal column, which is modeled by the wall-type boundary condition of the internal boundary.  The collision generates outward-propagating waves around the column and induces a rotational effect. After impact, there is an expanding wave pattern that results from interaction with the internal boundary.
\begin{figure}[htbp]
 \centering
 \subfigure{\includegraphics[width=54mm]{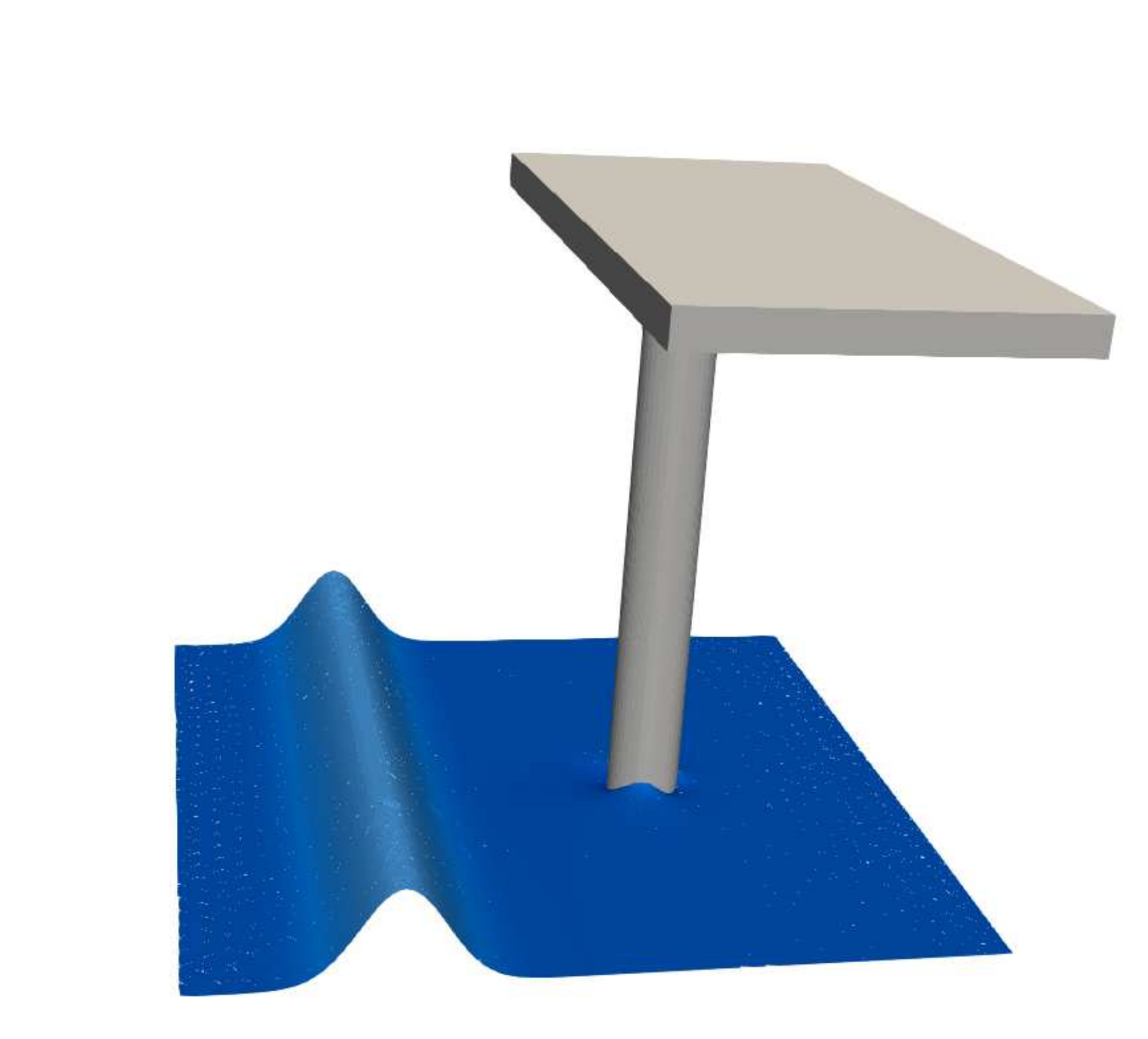}}
 \subfigure{\includegraphics[width=54mm]{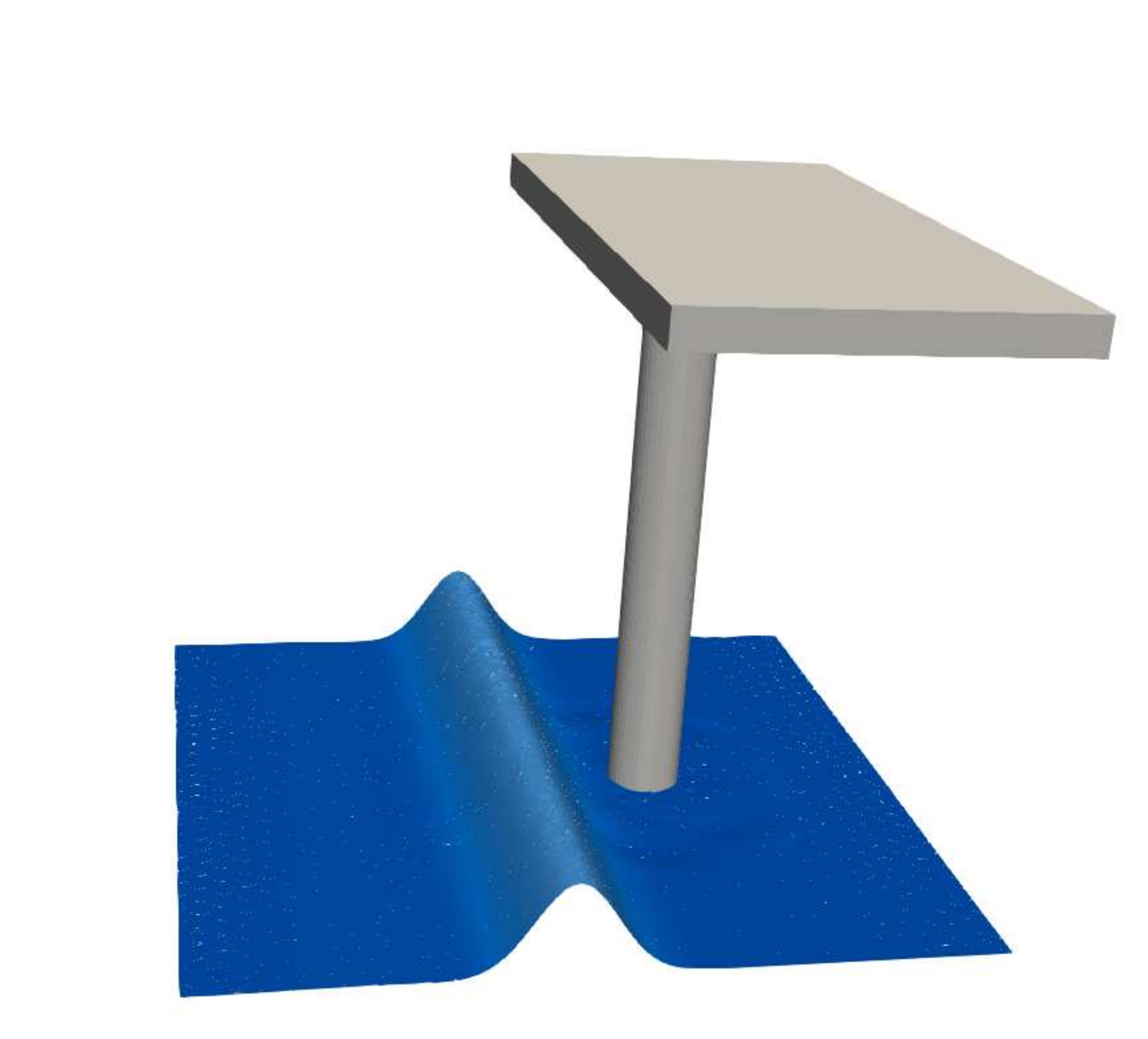}}
 \subfigure{\includegraphics[width=54mm]{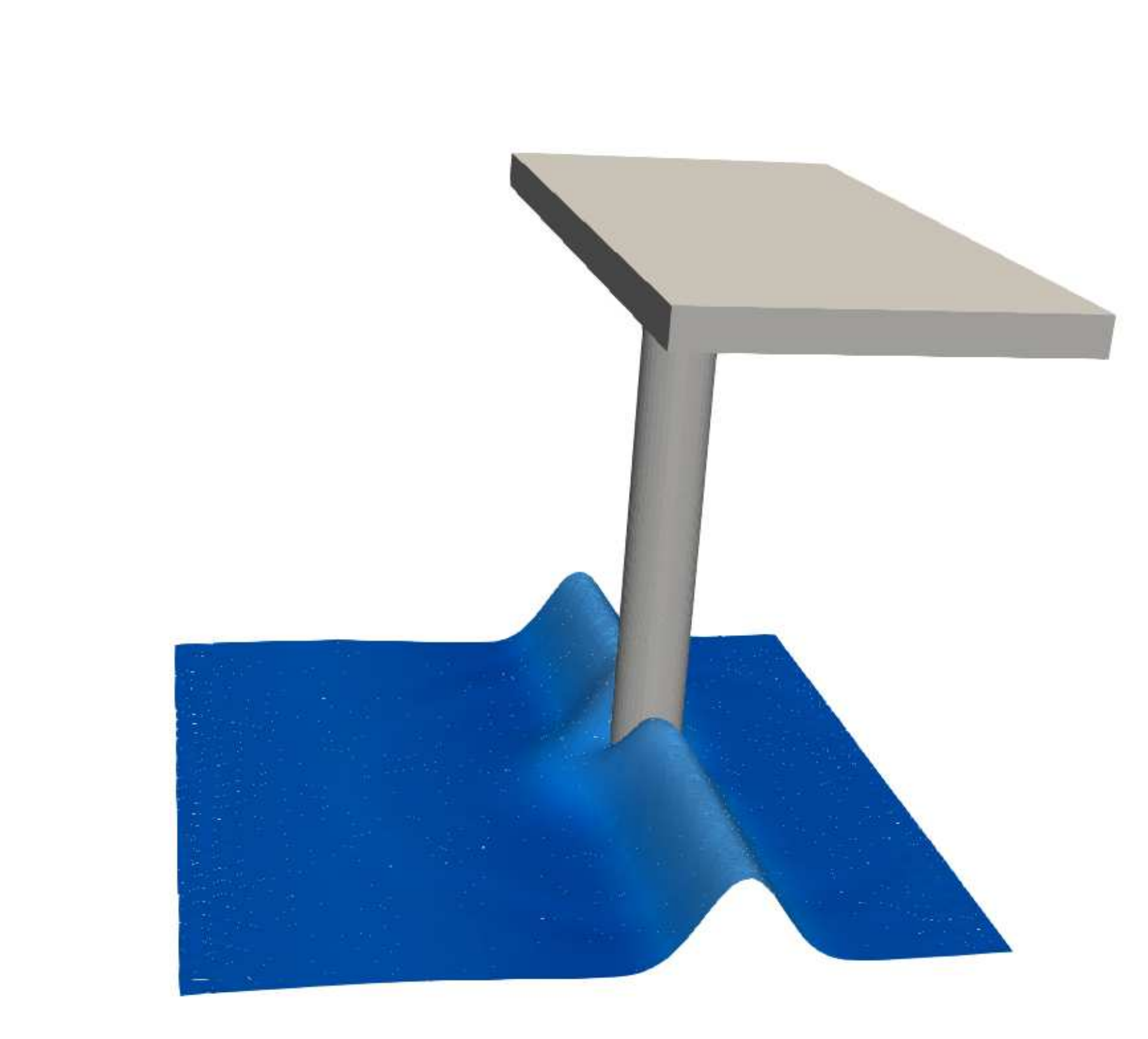}}\\
 \subfigure{\includegraphics[width=54mm]{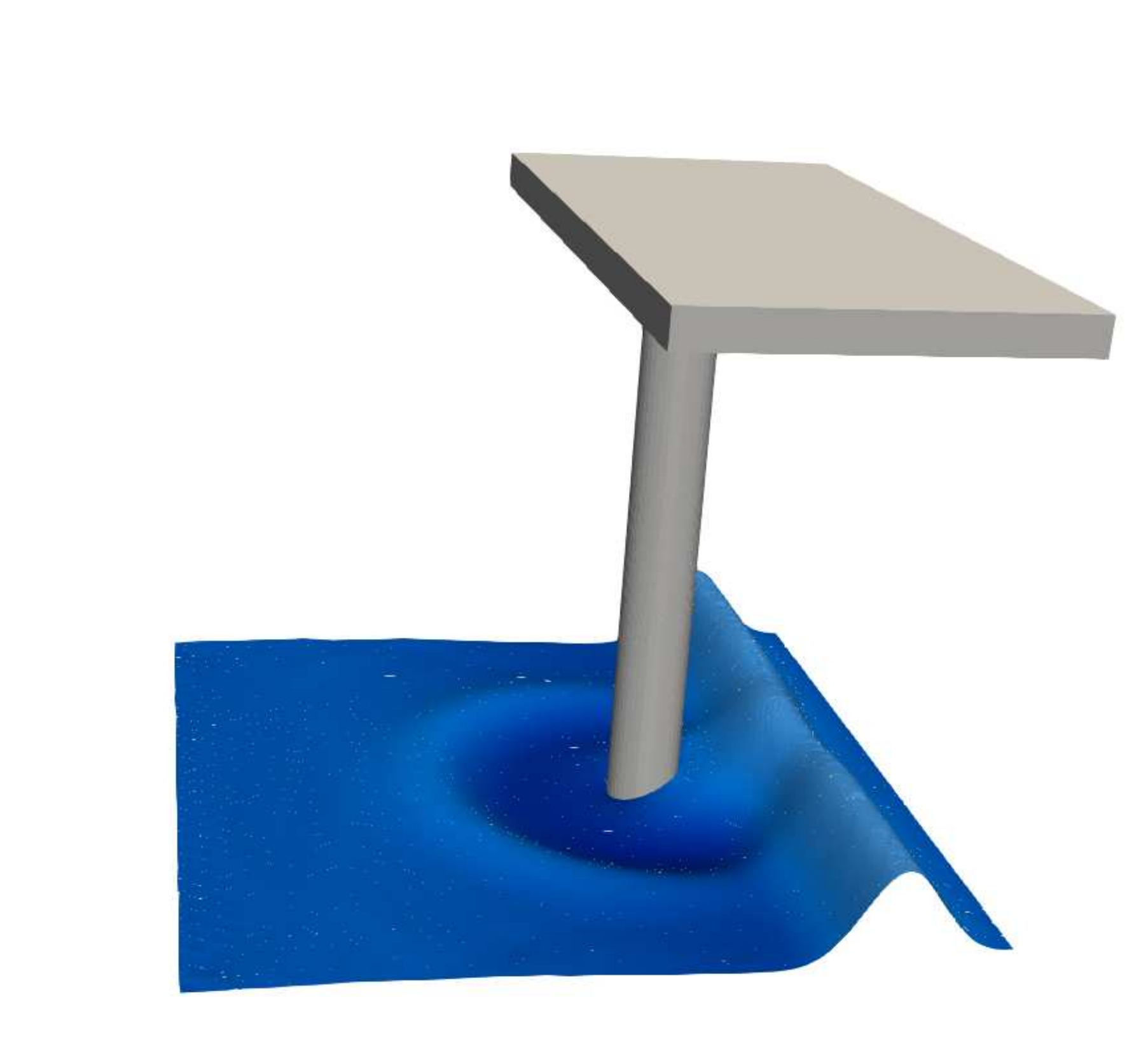}}
 \subfigure{\includegraphics[width=54mm]{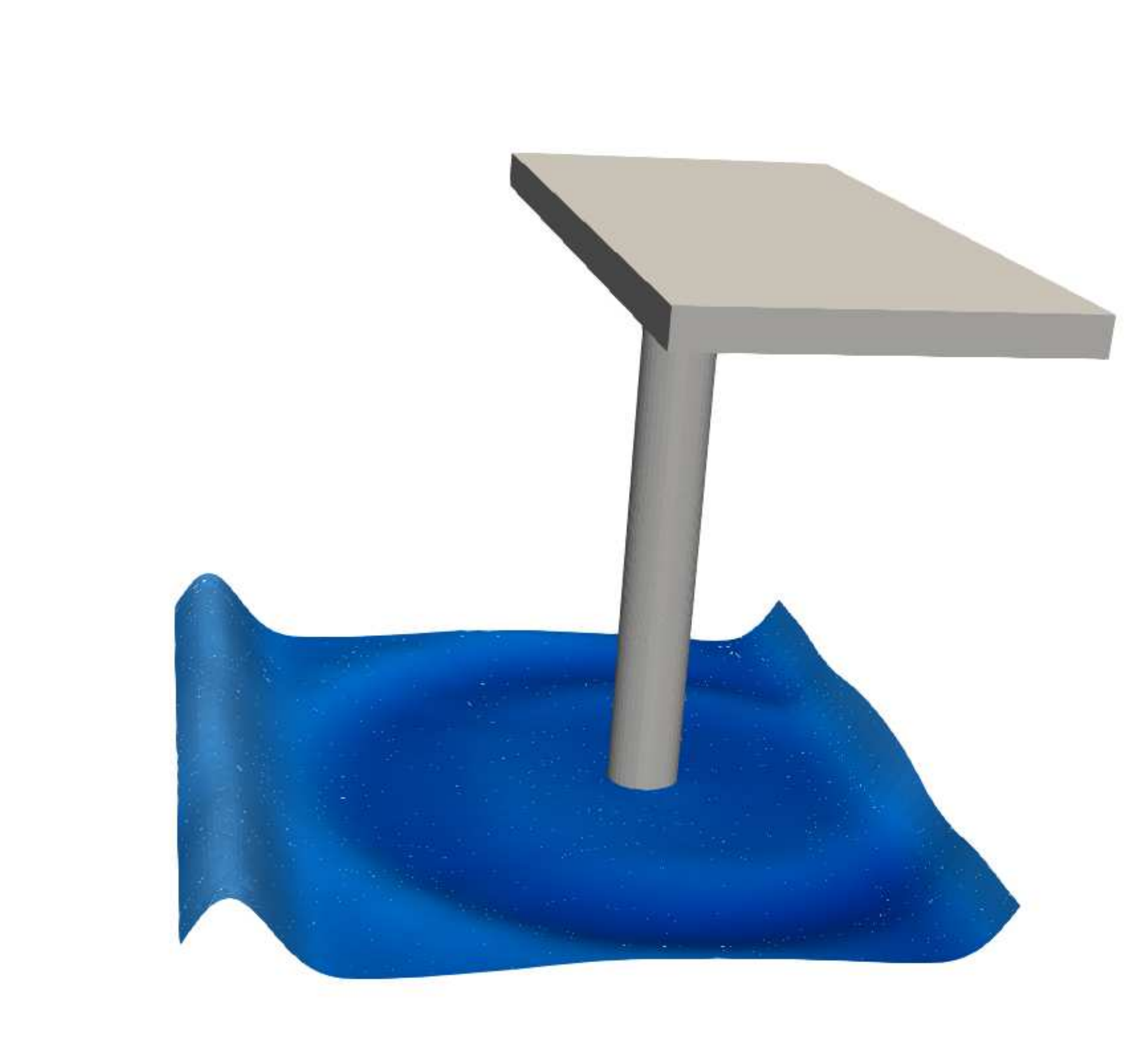}}
 \subfigure{\includegraphics[width=54mm]{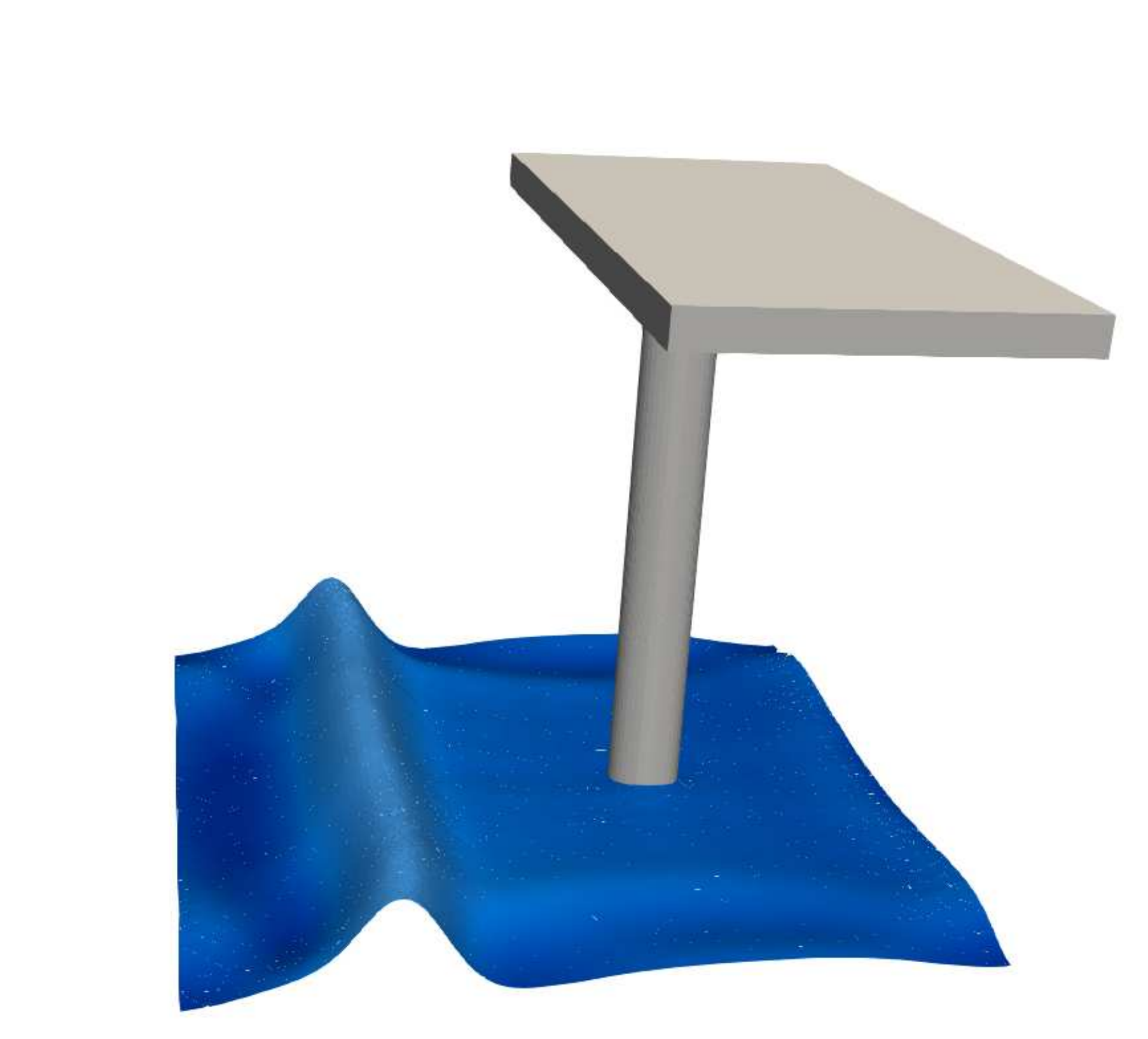}}
\caption{Experiment 2: Geopotential height $\phi_h$ at times $t = 0$, $4$, $8$, $12$, $16$, $20$ (3D view), computed using the HDG method \eqref{eq:HDG-velocity-w} of degree $k=2$,  mesh size $h = 1.25 \times 10^{-1}$, and the symplectic implicit midpoint scheme with time step $\Delta t = 6.25 \times 10^{-3}$.
} 
\label{fig:depth3D0}
\end{figure}
%%%%%%%%%%%%%%%%%%%%%%%%%%%%%%%%%%%%%%%%%%%%
Furthermore, Figure~\ref{fig:conservation} displays the evolution of the mass, energy, linear and angular momentum, vorticity, and potential vorticity, for three discretization levels: $h$, $h/2$, and $h/4$, with mesh size $h = 0.5$.  The mass exhibits small oscillations around zero at machine precision. The energy is conserved exactly in the three discretizations. The first and second components of the linear momentum, as well as the angular momentum, exhibit a convergence behavior. Both vorticity and potential vorticity present small oscillations around zero with amplitudes below $10^{-2}$.

\begin{figure}[htbp]
 \centering
 \subfigure{\includegraphics[width=0.75\linewidth]{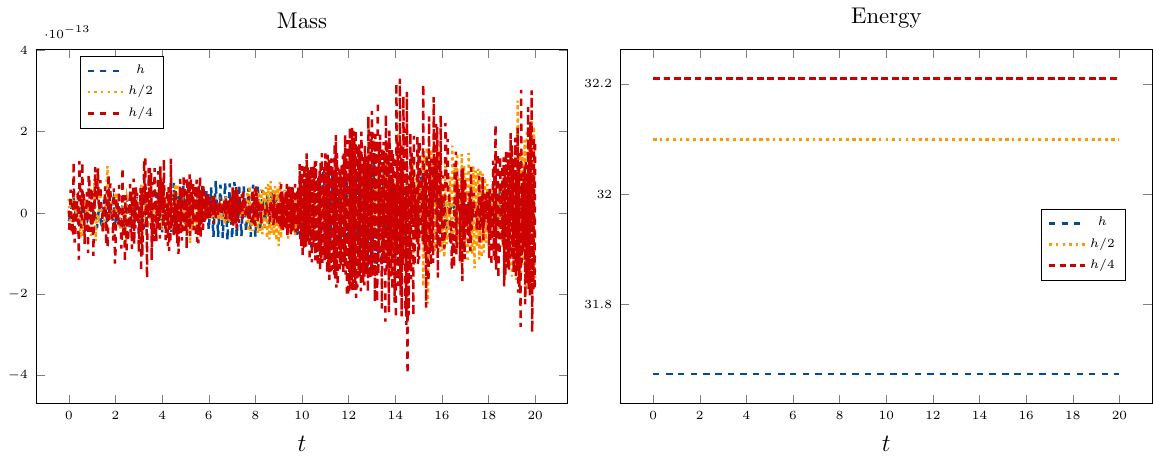}}\\ \vspace{-0.5cm}
 \subfigure{\includegraphics[width=0.75\linewidth]{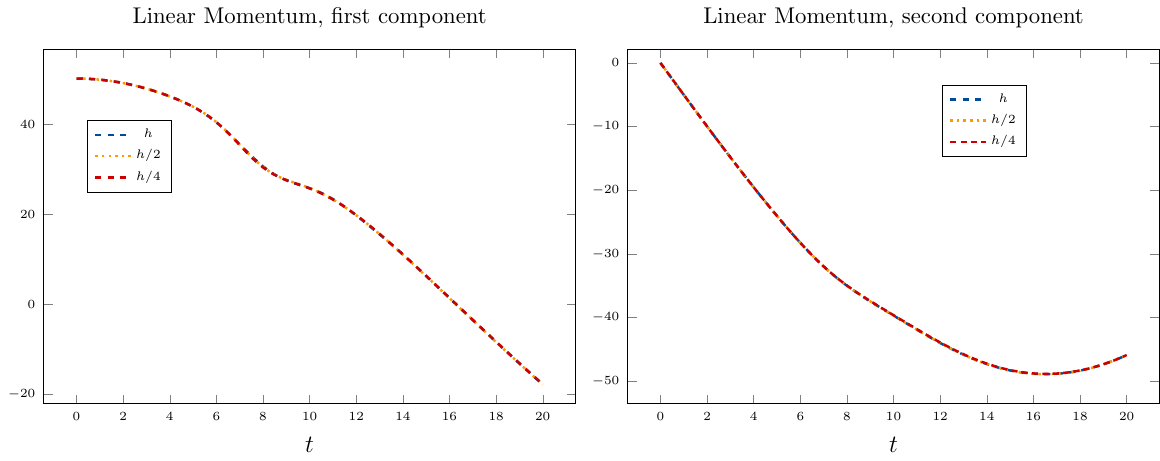}}\\ \vspace{-0.5cm}
 \subfigure{\includegraphics[width=0.75\linewidth]{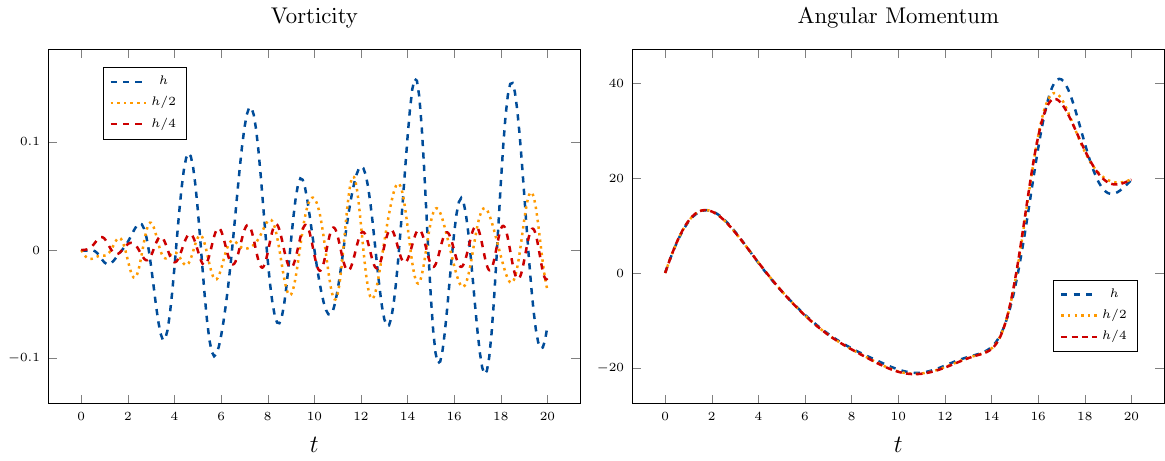}}\\ \vspace{-0.5cm}
 \subfigure{\includegraphics[width=65mm]{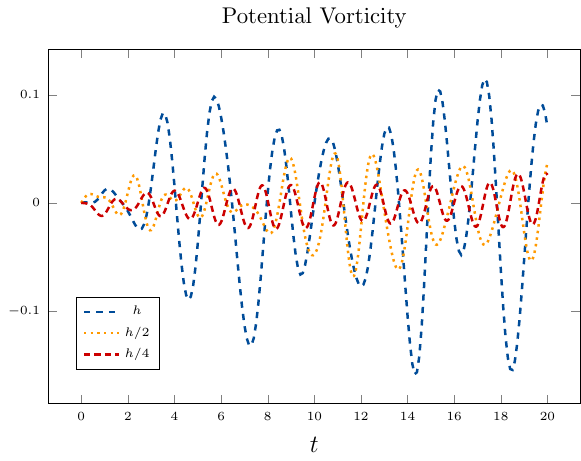}}
       \caption{Experiment 2: Mass (top, left), total energy (top, right), the first component of the  linear momentum (second row, left), the second component of the  linear momentum (second row, right), vorticity (third row, left), angular momentum (third row, right), and potential vorticity (bottom).} \label{fig:conservation}
\end{figure}

\subsection*{Experiment 3: Influence of bathymetry}
We now incorporate the effect of variable bathymetry, represented by $\phi_s = \phi_s(\xb)$. We consider the external force $\vect{s} = \Phi\grad \phi_s$ on the right-hand side of the momentum equation~\eqref{eq:LSWE2:b} in the $(\velocity -\displacement)$ formulation. It can be shown that the resulting system is a Hamiltonian system by modifying the Hamiltonian introduced in Section~\ref{sec:section2} (including the discrete Hamiltonian in Section \ref{sec:section4}). This is achieved by considering
\begin{alignat*}{4}
%\Hamiltonian_1(\phi, \velocity) &= \frac{1}{2} \int_\Omega \left( g\, \phi^2 + \Phi\, |\velocity|^2 - 2g\, \phi_s \phi \right), \\
\Hamiltonian_2[\velocity,\displacement] &= \frac{1}{2} \int_\Omega \left(  |\divergence \displacement|^2 + \Phi\, |\velocity|^2 - 2\, \phi_s\, \divergence \displacement \right).
\end{alignat*}
Including variable bathymetry can capture terrain effects on fluid motion (see, \textit{e.g.}, \cite{topography1,GarciaNavarro2019}). To illustrate this effect, we perform a numerical experiment in the domain $\Omega = (-20, 10) \times (-5, 5)$, with wall-type boundary conditions. The bathymetry is modeled as an inclined plane beginning at $x = 0$, with three localized mounds placed along the slope to analyze their localized influence on the geopotential height and horizontal velocity. The bathymetry for this experiment is 
\[
\phi_s(x,y) = 
\begin{cases}
-1.1 + \dfrac{3}{5} \left[ \beta(x-5,y) + \beta(x-5,y-3) + \beta(x-5,y+3) \right], & \text{if } x \geq 0, \\[0.5em]
0, & \text{otherwise},
\end{cases}
\]
with $\beta(x,y) = \exp(-2x^2)\exp(-2y^2).$  We set the physical parameters as $\Phi = 1$, $f = 0.1$, $\gamma = 0$, and $g = 1$.  The initial states consist of a Gaussian pulse and a zero vector. 
\[
\phi_0(x,y) = 10\,\exp(-2y^2)\exp(-2(x+5)^2), \qquad
\velocity_0(x,y) = [ 0, 0]^\top.
\]

\begin{figure}[htbp]
\centering
\subfigure{\includegraphics[width=64mm]{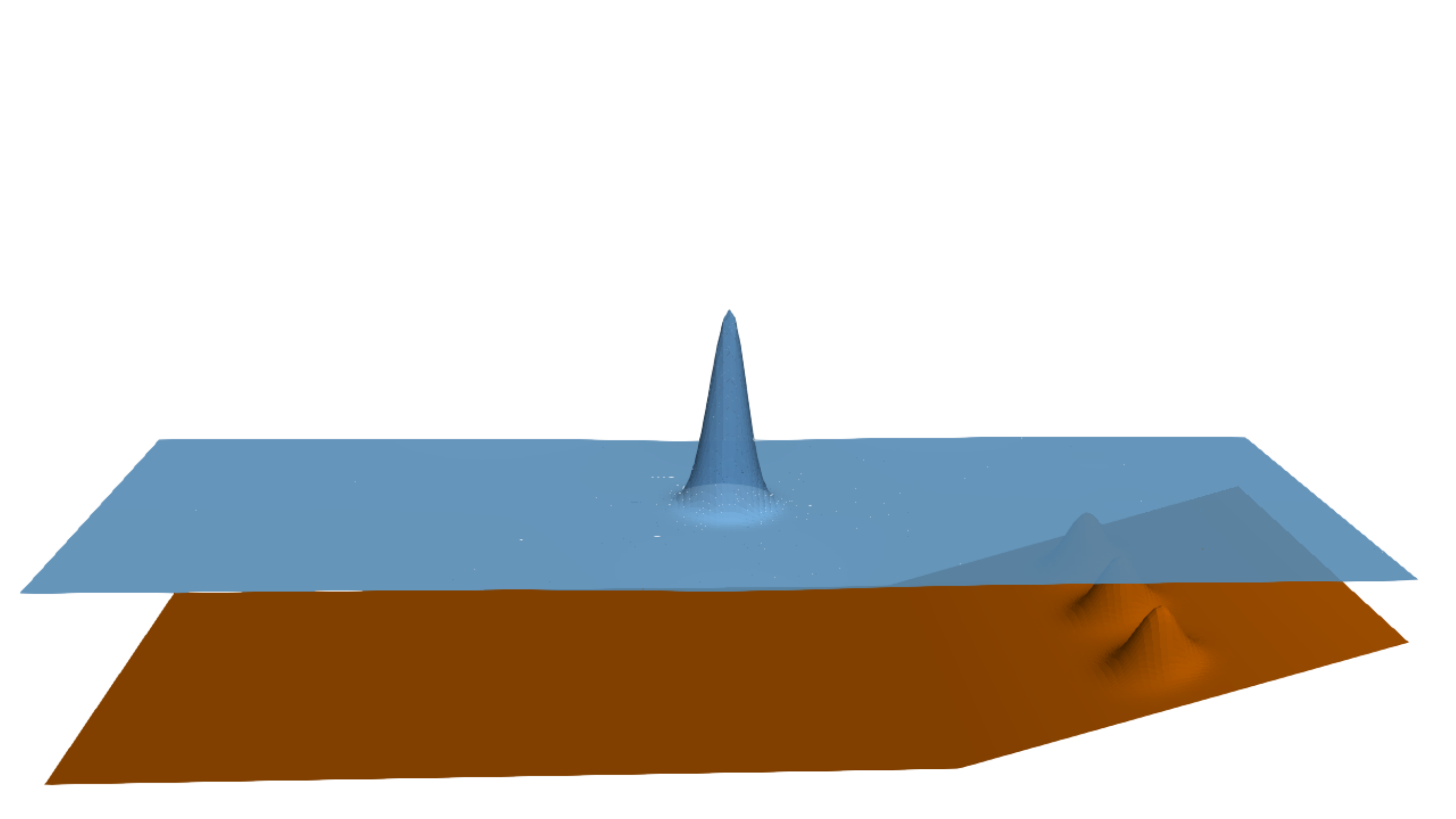}}
\subfigure{\includegraphics[width=64mm]{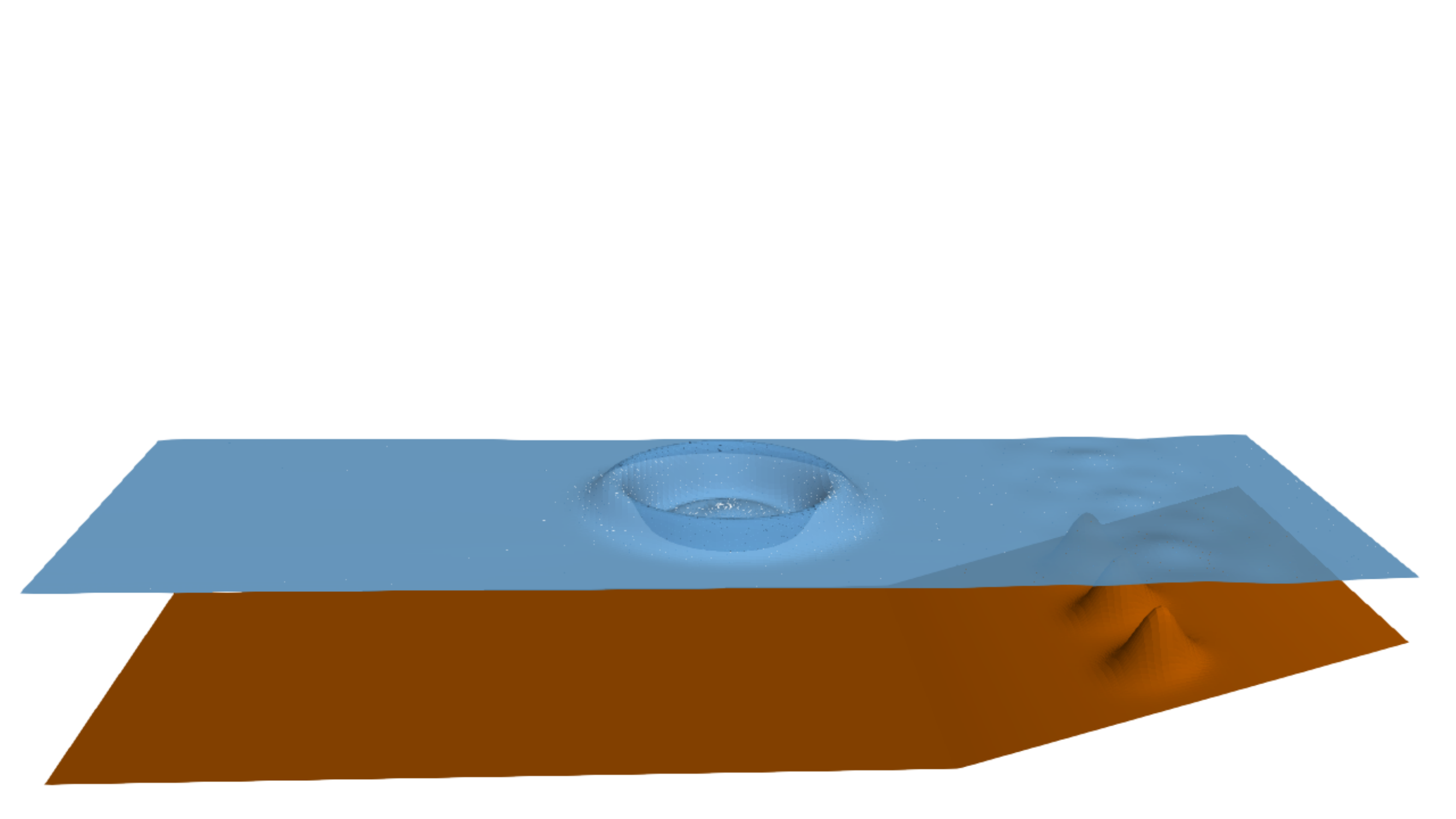}}\\ \vspace{-0cm}
\subfigure{\includegraphics[width=64mm]{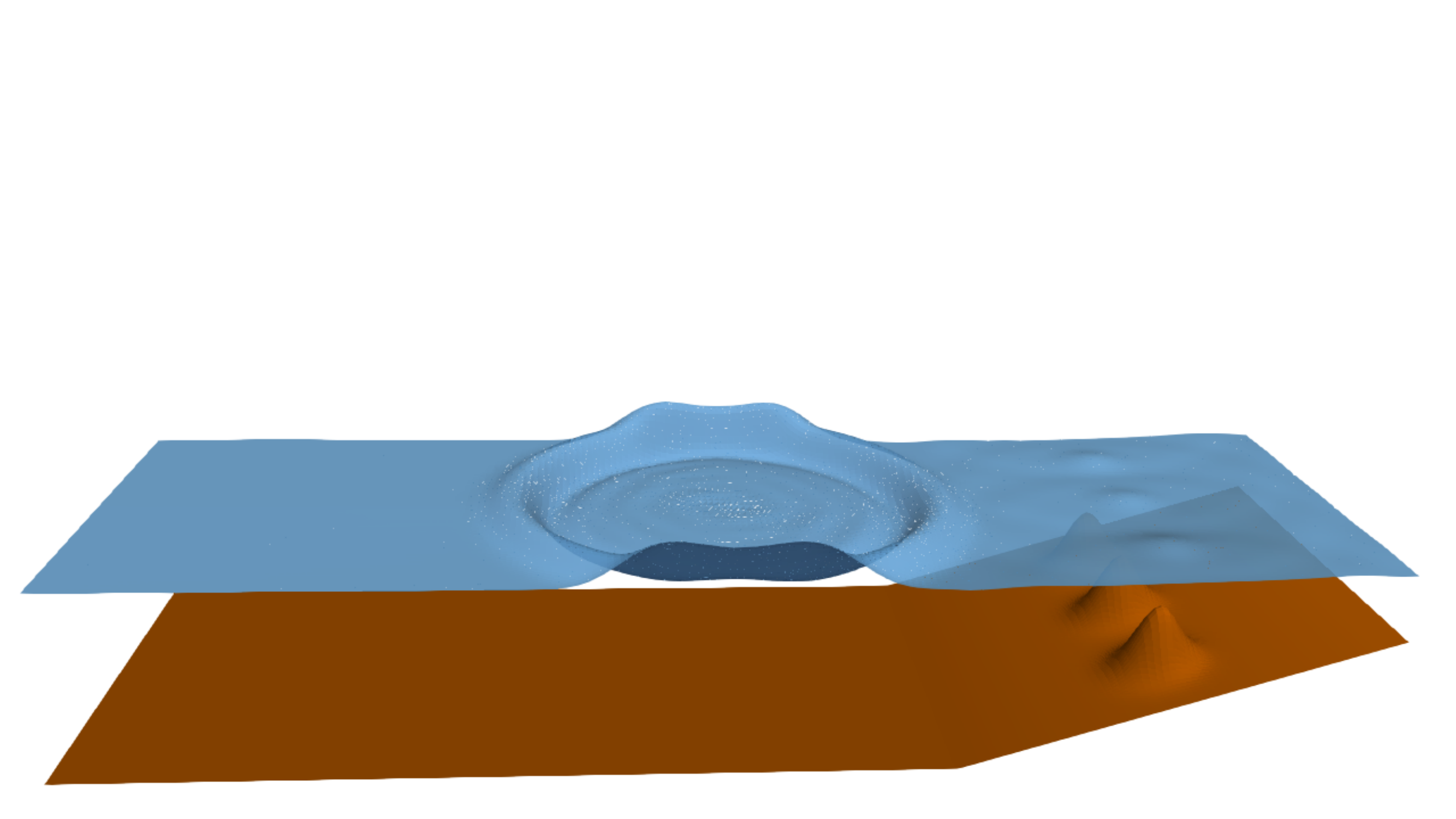}}
\subfigure{\includegraphics[width=64mm]{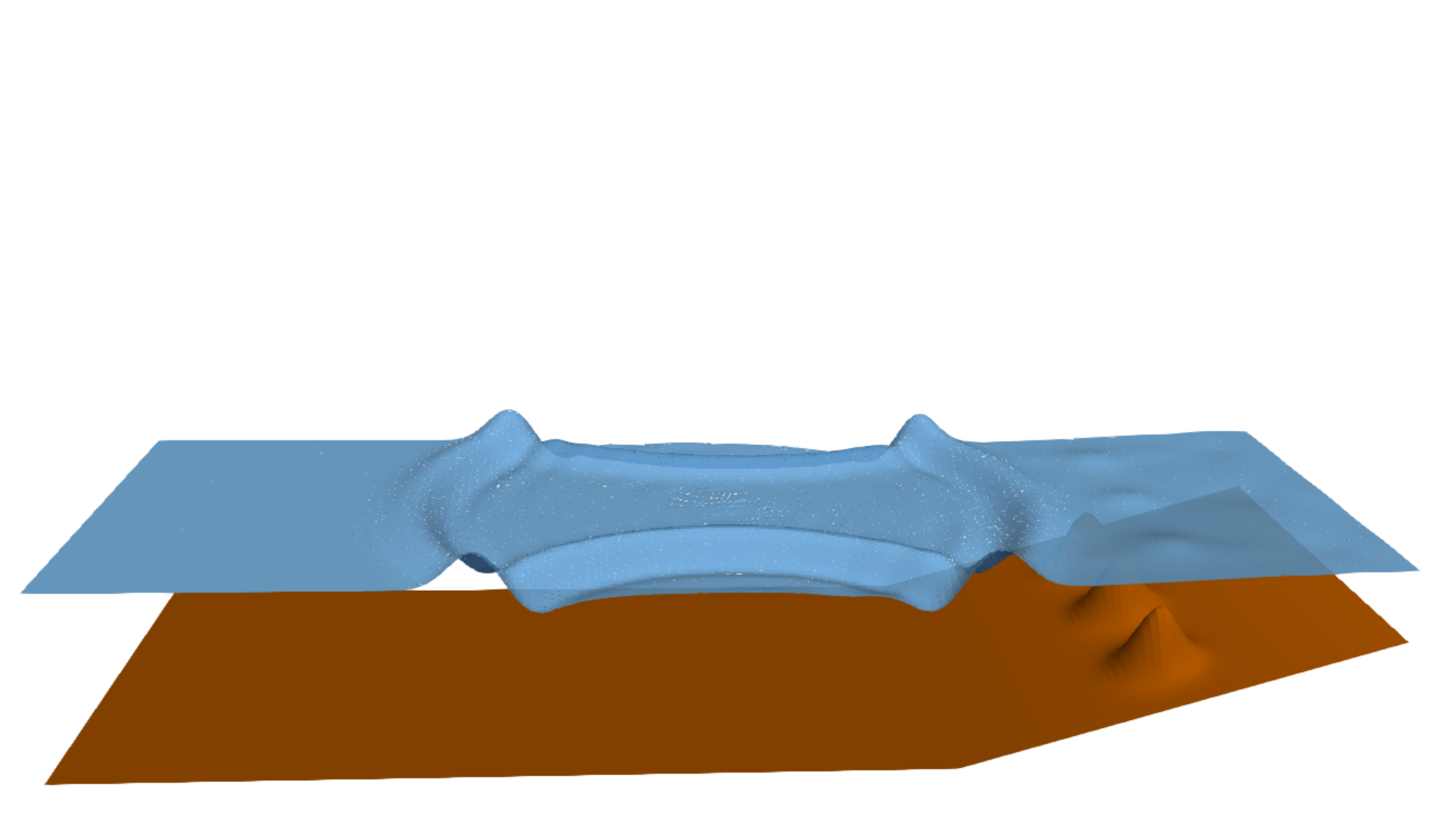}}\\ \vspace{-0cm}
\subfigure{\includegraphics[width=64mm]{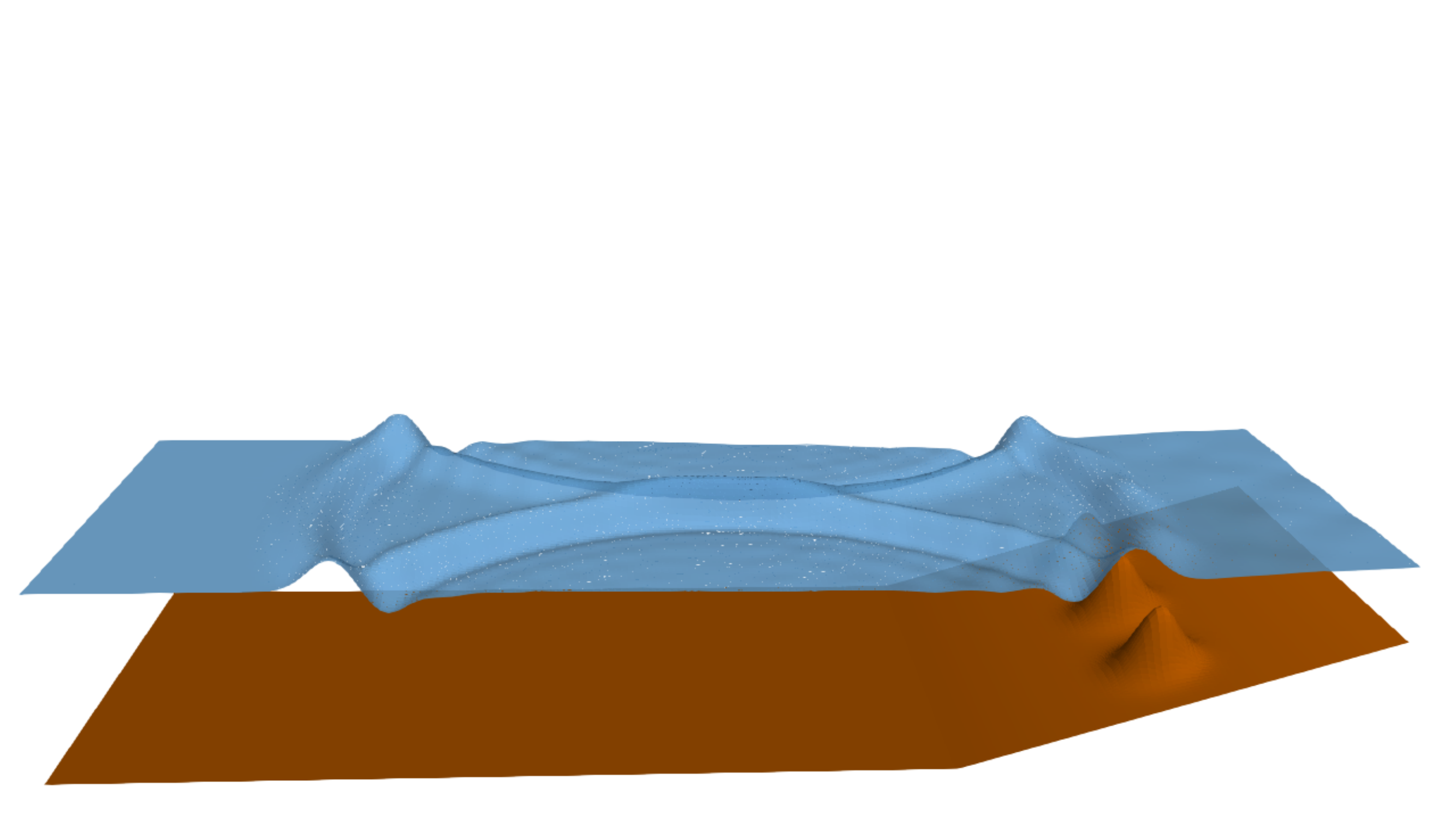}}
\subfigure{\includegraphics[width=64mm]{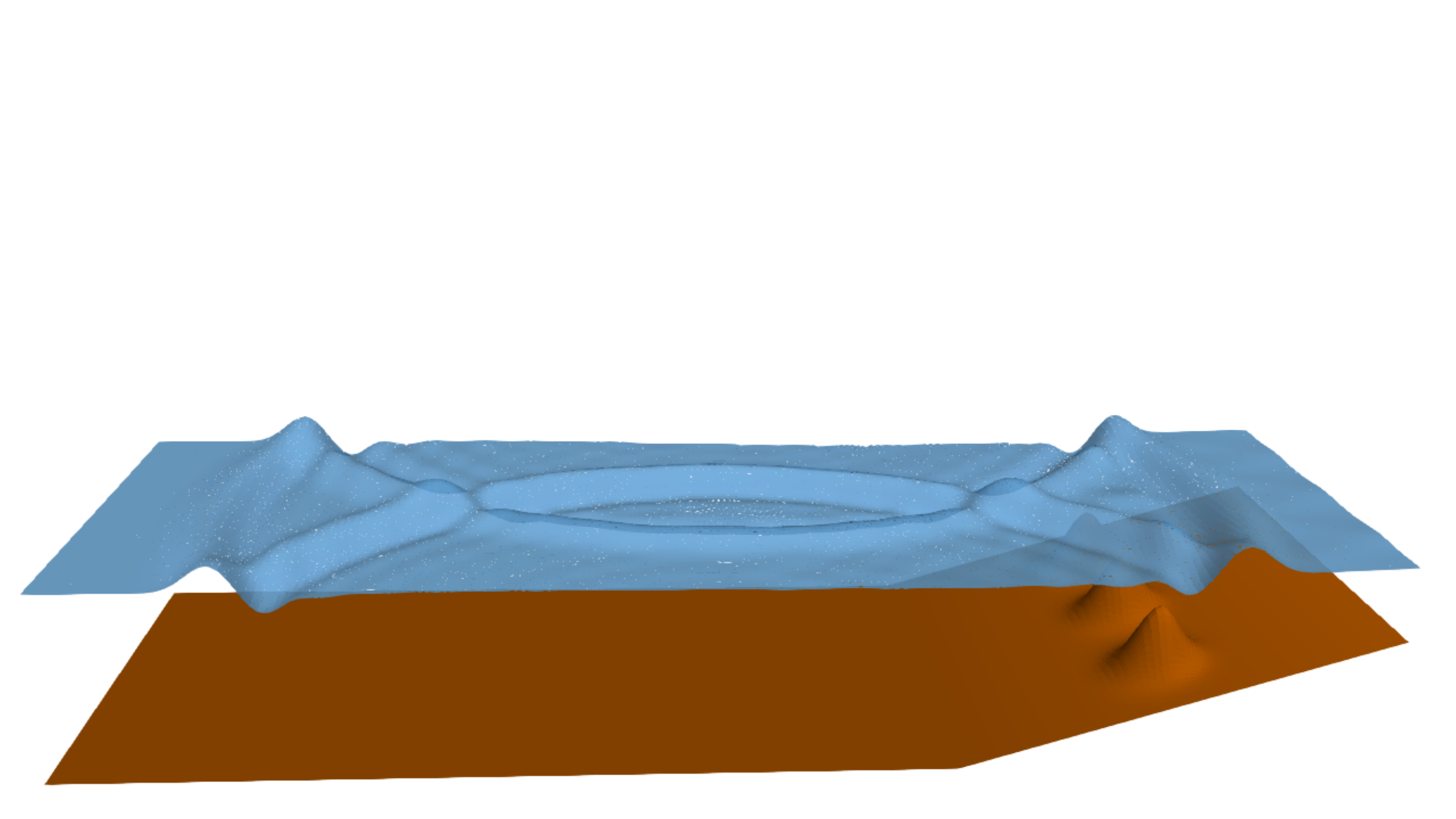}}
\caption{Experiment 3: Geopotential height $\phi_h$ at times $t = 0$, $2$, $4$, $6$, $8$, $10$,  (blue color) and bathymetry (color brown). The simulation is performed using the HDG method \eqref{eq:HDG-velocity-w} with polynomial degree $k=1$, mesh size $h = 10 \times 10^{-3}$, and the symplectic implicit midpoint scheme with time step $\Delta t = 5 \times 10^{-4}.$} \label{fig:topodepth3D}
\end{figure}
\begin{figure}[htbp]
 \centering
 \subfigure{\includegraphics[width=0.75\linewidth]{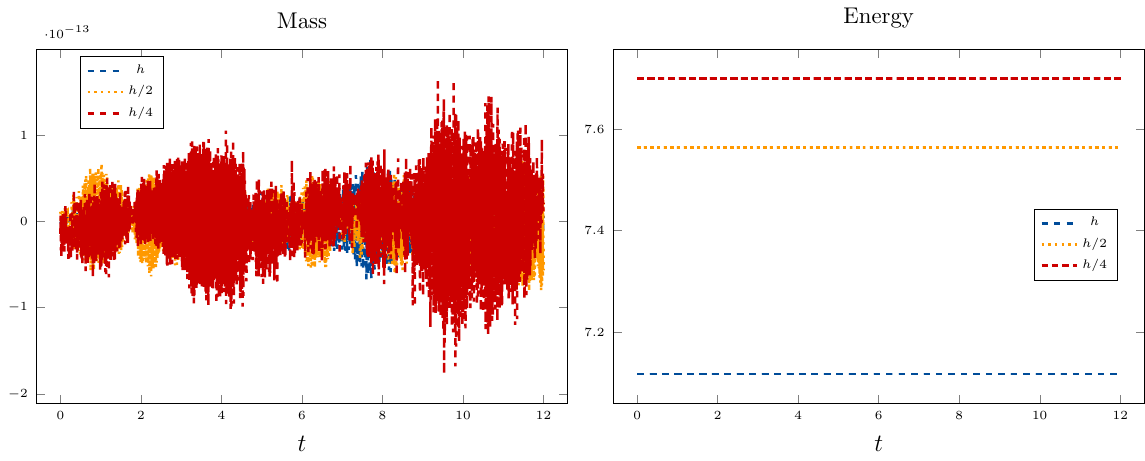}}\\ \vspace{-0.5cm}
 \subfigure{\includegraphics[width=0.75\linewidth]{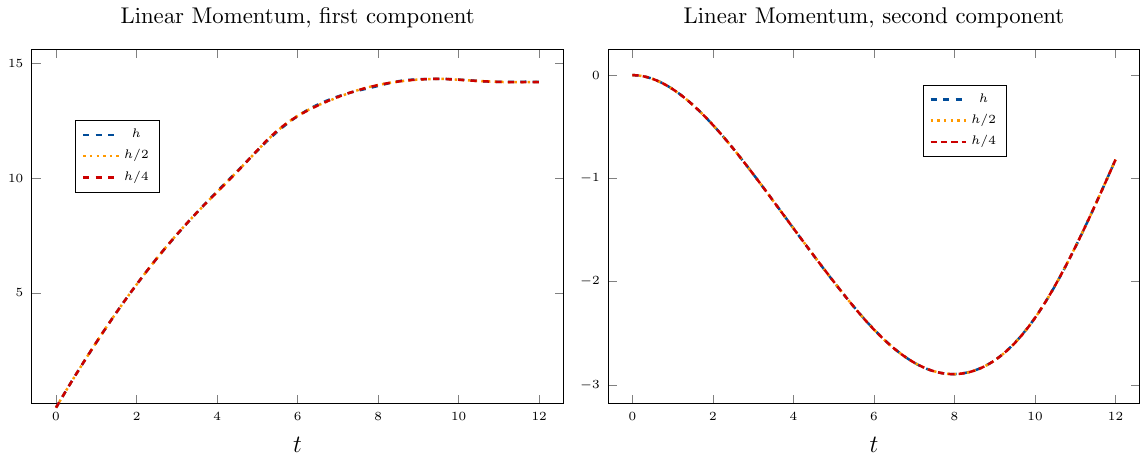}}\\ \vspace{-0.5cm}
 \subfigure{\includegraphics[width=0.75\linewidth]{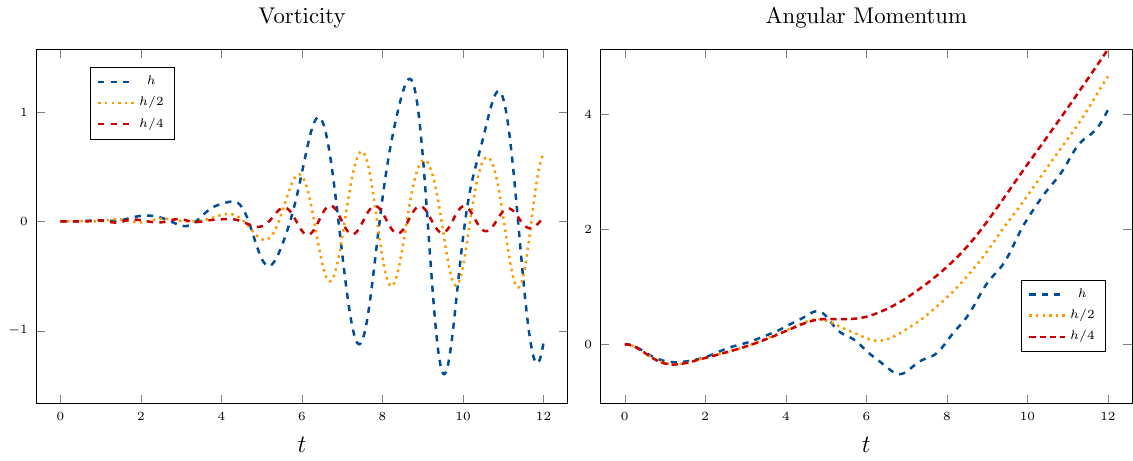}}\\ \vspace{-0.5cm}
 \subfigure{\includegraphics[width=65mm]{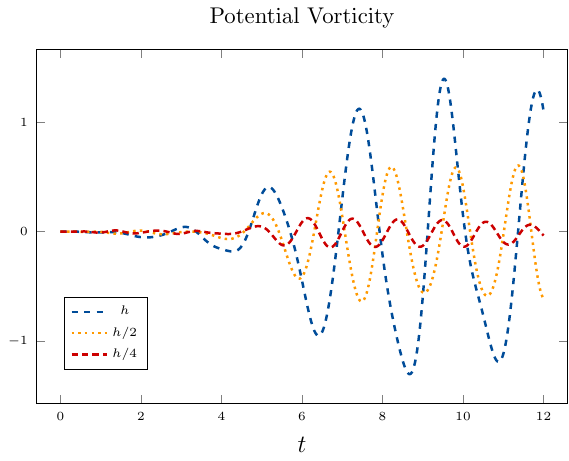}}
       \caption{Experiment 3: Mass (top, left), total energy (top, right), the first component of the linear momentum (second row, left), the second component of the linear momentum (second row, right), vorticity (third row, left), angular momentum (third row, right), and potential vorticity (bottom).}  \label{fig:Conservedquantities:Topography}
\end{figure}

For the HDG method, we use polynomials of degree $k = 1$ and a stabilization parameter of $\tau = 1$. Time integration is performed using the symplectic midpoint scheme. Figure~\ref{fig:topodepth3D} shows 3D snapshots of the geopotential height $\phi_h$ (blue color) over the bathymetry $\phi_s$ (brown color). The initial pulse propagates symmetrically but slows down as it climbs the slope. The three mounds induce localized vortices that generate a rotating motion of the flow. Additionally, Figure~\ref{fig:Conservedquantities:Topography} shows the evolution of various physical quantities for three discretizations: $h$, $h/2$, and $h/4$, with mesh size $h = 4\times 10^{-2}$ and $\Delta t= Ch$ with $C=5\times 10^{-2}$. Mass oscillates around zero with machine precision. Energy is conserved exactly across all discretizations. Both linear and angular momentum components show convergence behavior. Vorticity and potential vorticity exhibit small oscillations around zero.
%%%%%%%%%%%%%%%%%%%%%%%%%%%%%%%%%%%%%%%%%%%%%%%%%%%%%%%%%%%%%%%%%%%%%%%%%%%%%%%%%%%%%%%%%%%%%%%%%%%%%%%%%%%%
\section{Conclusions}\label{sec:section7}
We developed Symplectic Hamiltonian HDG methods for the linearized shallow water equations. By introducing an auxiliary variable $\displacement$ satisfying $\phi = -\divergence \displacement$, we reformulated the system as a  Hamiltonian PDE for the $(\velocity-\displacement)$ formulation. Based on this formulation, we proposed HDG methods that preserve a discrete Hamiltonian structure, in which the numerical energy includes an additional quadratic term involving the numerical trace (see Theorem~\ref{teo:HamiltoniansystemHDG}). We also introduced an initialization procedure based on solving a vector Laplacian equation. We rewrote the HDG formulation in matrix form and provided an alternative proof of its Hamiltonian structure in the context of finite-dimensional dynamical systems. Additionally, for time discretization, we introduced the symplectic sDIRK and sEPRK methods for the conservation of discrete energy. Numerical experiments indicated optimal convergence rates and provided evidence of long-time conservation of mass, vorticity, and total energy.

In comparison to earlier energy-preserving schemes, such as those based on finite differences \cite{Arakawa1981, Arakawa1997}, including Poisson or Nambu bracket discretizations \cite{Salmon, Salmon2, Salmon2009}, our method offers potential advantages in terms of higher-order accuracy and mesh flexibility. Compared to discontinuous Hamiltonian finite element methods based on Poisson brackets \cite{XuvanderVegtBokhove08,12_Cotter,11_Cotter_MR3874575,MR4038160}, as well as entropy-stable schemes \cite{1_Kieran,3_Kieran,5_GASSNER2016291,6_2_WINTERMEYER2018447}, our approach achieves a significant reduction in globally coupled unknowns through hybridization, particularly in the context of implicit discretizations. Moreover, unlike the IMEX HDG-DG and HDG methods presented in \cite{10_Dawson,11_ThanBui,Bui-Thanh2016}, which do not explicitly adopt a Hamiltonian framework for energy conservation, our formulation ensures the preservation of the Hamiltonian structure at the spatial discretization level.

Future work will focus on extending the proposed methods to the nonlinear case. This extension presents new challenges, particularly in designing a reformulation that preserves the Hamiltonian structure while properly handling the nonlinear numerical fluxes arising in the spatial discretization. Furthermore, future research will focus on the study of a priori error estimates for the numerical methods proposed in this paper, similar to that presented in \cite{MR4609879} for Maxwell’s equations.
\bibliography{bibliography}

\begin{thebibliography}{10}

\bibitem{Akbar2013}
Muhammad Akbar and Shahrouz Aliabadi.
\newblock Hybrid numerical methods to solve shallow water equations for hurricane induced storm surge modeling.
\newblock {\em Environmental Modelling and Software}, 46:118--128, 2013.

\bibitem{Arakawa1997}
Akio Arakawa.
\newblock Computational design for long-term numerical integration of the equations of fluid motion: Two-dimensional incompressible flow. part i.
\newblock {\em J. Comput. Phys}, 1(1):119--143, 1966.

\bibitem{Arakawa1981}
Akio Arakawa and Vivian~R. Lamb.
\newblock A potential enstrophy and energy conserving scheme for the shallow water equations.
\newblock {\em Mon. Weather Rev.}, 109(1):18--36, 1981.

\bibitem{ArnoldFalkGopalakrishnan2012}
Douglas~N. Arnold, Richard~S. Falk, and Jay Gopalakrishnan.
\newblock Mixed finite element approximation of the vector {L}aplacian with {D}irichlet boundary conditions.
\newblock {\em Math. Models Methods Appl. Sci.}, 22(9):1250024, 26, 2012.

\bibitem{AwanouGuzmanStern2020}
Gerard Awanou, Maurice Fabien, Johnny Guzm\'an, and Ari Stern.
\newblock Hybridization and postprocessing in finite element exterior calculus.
\newblock {\em Math. Comp.}, 92(339):79--115, 2023.

\bibitem{12_Cotter}
W.~Bauer and C.~J. Cotter.
\newblock Energy-enstrophy conserving compatible finite element schemes for the rotating shallow water equations with slip boundary conditions.
\newblock {\em J. Comput. Phys.}, 373:171--187, 2018.

\bibitem{Bui-Thanh2016}
Tan Bui-Thanh.
\newblock Construction and analysis of {HDG} methods for linearized shallow water equations.
\newblock {\em SIAM J. Sci. Comput.}, 38(6):A3696--A3719, 2016.

\bibitem{MR1033901}
Vincenzo Casulli.
\newblock Semi-implicit finite difference methods for the two-dimensional shallow water equations.
\newblock {\em J. Comput. Phys.}, 86(1):56--74, 1990.

\bibitem{MR1930132}
Philippe~G. Ciarlet.
\newblock {\em The finite element method for elliptic problems}, volume~40 of {\em Classics in Applied Mathematics}.
\newblock Society for Industrial and Applied Mathematics (SIAM), Philadelphia, PA, 2002.
\newblock Reprint of the 1978 original [North-Holland, Amsterdam; MR0520174 (58 \#25001)].

\bibitem{MR3643828}
B.~Cockburn, N.~C. Nguyen, and J.~Peraire.
\newblock H{DG} methods for hyperbolic problems.
\newblock In {\em Handbook of numerical methods for hyperbolic problems}, volume~17 of {\em Handb. Numer. Anal.}, pages 173--197. Elsevier/North-Holland, Amsterdam, 2016.

\bibitem{eliptichdg4}
Bernardo Cockburn.
\newblock Static condensation, hybridization, and the devising of the {HDG} methods.
\newblock In {\em Building bridges: connections and challenges in modern approaches to numerical partial differential equations}, volume 114 of {\em Lect. Notes Comput. Sci. Eng.}, pages 129--177. Springer, [Cham], 2016.

\bibitem{MR4609879}
Bernardo Cockburn, Shukai Du, and Manuel~A. S\'anchez.
\newblock {\it {A} priori} error analysis of new semidiscrete, {H}amiltonian {HDG} methods for the time-dependent {M}axwell's equations.
\newblock {\em ESAIM Math. Model. Numer. Anal.}, 57(4):2097--2129, 2023.

\bibitem{CockburnDuSanchez2023}
Bernardo Cockburn, Shukai Du, and Manuel~A. Sánchez.
\newblock Combining finite element space-discretizations with symplectic time-marching schemes for linear hamiltonian systems.
\newblock {\em Frontiers in Applied Mathematics and Statistics}, 9, 2023.

\bibitem{CockburnFuHungriaSanchezSayas2018}
Bernardo Cockburn, Zhixing Fu, Allan Hungria, Liangyue Ji, Manuel~A. S\'anchez, and Francisco-Javier Sayas.
\newblock Stormer-{N}umerov {HDG} methods for acoustic waves.
\newblock {\em J. Sci. Comput.}, 75(2):597--624, 2018.

\bibitem{eliptichdg}
Bernardo Cockburn, Jayadeep Gopalakrishnan, and Raytcho Lazarov.
\newblock Unified hybridization of discontinuous {G}alerkin, mixed, and continuous {G}alerkin methods for second order elliptic problems.
\newblock {\em SIAM J. Numer. Anal.}, 47(2):1319--1365, 2009.

\bibitem{article1}
Bernardo Cockburn, Jayadeep Gopalakrishnan, Ngoc~Cuong Nguyen, Jaume Peraire, and Francisco-Javier Sayas.
\newblock Analysis of {HDG} methods for {S}tokes flow.
\newblock {\em Math. Comp.}, 80(274):723--760, 2011.

\bibitem{article0}
Bernardo Cockburn and Ke~Shi.
\newblock Devising hdg methods for {S}tokes flow: an overview.
\newblock {\em Comput. \& Fluids}, 98:221--229, 2014.

\bibitem{MR2867413}
Adrian Constantin.
\newblock {\em Nonlinear water waves with applications to wave-current interactions and tsunamis}, volume~81 of {\em CBMS-NSF Regional Conference Series in Applied Mathematics}.
\newblock Society for Industrial and Applied Mathematics (SIAM), Philadelphia, PA, 2011.

\bibitem{topography1}
A.~I. Delis and Th. Katsaounis.
\newblock Relaxation schemes for the shallow water equations.
\newblock {\em Internat. J. Numer. Methods Fluids}, 41(7):695--719, 2003.

\bibitem{MR4092329}
Shukai Du and Francisco-Javier Sayas.
\newblock A unified error analysis of hybridizable discontinuous {G}alerkin methods for the static {M}axwell equations.
\newblock {\em SIAM J. Numer. Anal.}, 58(2):1367--1391, 2020.

\bibitem{Dyke1987}
P.~P.~G. Dyke.
\newblock Tidal streams in shallow water.
\newblock In J.~C.~J. Nihoul and B.~M. Jamart, editors, {\em Three-Dimensional Models of Marine and Estuarine Dynamics}, volume~45 of {\em Elsevier Oceanography Series}, pages 441--453. Elsevier, 1987.

\bibitem{GarciaNavarro2019}
Pilar Garc\'ia-Navarro, Jes\'us Murillo, Javier Fern\'andez-Pato, Isabel Echeverribar, and Mario Morales-Hern\'andez.
\newblock The shallow water equations and their application to realistic cases.
\newblock {\em Environmental Fluid Mechanics}, 19(5):1235--1252, 2019.

\bibitem{5_GASSNER2016291}
Gregor~J. Gassner, Andrew~R. Winters, and David~A. Kopriva.
\newblock A well balanced and entropy conservative discontinuous {G}alerkin spectral element method for the shallow water equations.
\newblock {\em Appl. Math. Comput.}, 272:291--308, 2016.

\bibitem{GiraldoWarburton2008}
F.~X. Giraldo and T.~Warburton.
\newblock A high-order triangular discontinuous {G}alerkin oceanic shallow water model.
\newblock {\em Internat. J. Numer. Methods Fluids}, 56(7):899--925, 2008.

\bibitem{girauld}
Vivette Girault and Pierre-Arnaud Raviart.
\newblock {\em Finite element methods for {N}avier-{S}tokes equations}, volume~5 of {\em Springer Series in Computational Mathematics}.
\newblock Springer-Verlag, Berlin, 1986.
\newblock Theory and algorithms.

\bibitem{Hairer1993}
E.~Hairer, S.~P. N\o~rsett, and G.~Wanner.
\newblock {\em Solving ordinary differential equations. {I}}, volume~8 of {\em Springer Series in Computational Mathematics}.
\newblock Springer-Verlag, Berlin, second edition, 1993.
\newblock Nonstiff problems.

\bibitem{11_ThanBui}
Shinhoo Kang, Francis~X. Giraldo, and Tan Bui-Thanh.
\newblock I{MEX} {HDG}-{DG}: a coupled implicit hybridized discontinuous {G}alerkin and explicit discontinuous {G}alerkin approach for shallow water systems.
\newblock {\em J. Comput. Phys.}, 401:109010, 21, 2020.

\bibitem{topography22}
S.~Mart\'inez-Aranda, J.~Fern\'andez-Pato, I.~Echeverribar, A.~Navas-Montilla, M.~Morales-Hern\'andez, P.~Brufau, J.~Murillo, and P.~Garc\'ia-Navarro.
\newblock Finite volume models and efficient simulation tools ({EST}) for shallow flows.
\newblock In {\em Advances in fluid mechanics---modelling and simulations}, Forum Interdiscip. Math., pages 67--137. Springer, Singapore, 2022.

\bibitem{McLachlan94}
Robert McLachlan.
\newblock Symplectic integration of {H}amiltonian wave equations.
\newblock {\em Numer. Math.}, 66(4):465--492, 1994.

\bibitem{McLachlan1992}
Robert~I. McLachlan and Pau Atela.
\newblock The accuracy of symplectic integrators.
\newblock {\em Nonlinearity}, 5(2):541--562, 1992.

\bibitem{article3}
N.~C. Nguyen, J.~Peraire, and B.~Cockburn.
\newblock A hybridizable discontinuous {G}alerkin method for {S}tokes flow.
\newblock {\em Comput. Methods Appl. Mech. Engrg.}, 199(9-12):582--597, 2010.

\bibitem{MR2822937}
N.~C. Nguyen, J.~Peraire, and B.~Cockburn.
\newblock Hybridizable discontinuous {G}alerkin methods for the time-harmonic {M}axwell's equations.
\newblock {\em J. Comput. Phys.}, 230(19):7151--7175, 2011.

\bibitem{Olver93book}
Peter~J. Olver.
\newblock {\em Applications of {L}ie groups to differential equations}, volume 107 of {\em Graduate Texts in Mathematics}.
\newblock Springer-Verlag, New York, 2nd edition, 1993.

\bibitem{1_Kieran}
Kieran Ricardo, Dave Lee, and Kenneth Duru.
\newblock Conservation and stability in a discontinuous {G}alerkin method for the vector invariant spherical shallow water equations.
\newblock {\em J. Comput. Phys.}, 500:Paper No. 112763, 15, 2024.

\bibitem{3_Kieran}
Kieran Ricardo, David Lee, and Kenneth Duru.
\newblock Entropy and energy conservation for thermal atmospheric dynamics using mixed compatible finite elements.
\newblock {\em J. Comput. Phys.}, 496:Paper No. 112605, 15, 2024.

\bibitem{Ruth1983}
R.~D. Ruth.
\newblock A canonical integration technique.
\newblock {\em IEEE Transactions on Nuclear Science}, 30(4):2669--2671, 1983.

\bibitem{20_MR1718369}
Rick Salmon.
\newblock {\em Lectures on geophysical fluid dynamics}.
\newblock Oxford University Press, New York, 1998.

\bibitem{Salmon2}
Rick Salmon.
\newblock Poisson-bracket approach to the construction of energy- and potential-enstrophy-conserving algorithms for the shallow-water equations.
\newblock {\em J. Atmospheric Sci.}, 61(16):2016--2036, 2004.

\bibitem{Salmon}
Rick Salmon.
\newblock A general method for conserving quantities related to potential vorticity in numerical models.
\newblock {\em Nonlinearity}, 18(5):R1--R16, 2005.

\bibitem{Salmon2009}
Rick Salmon.
\newblock A shallow water model conserving energy and potential enstrophy in the presence of boundaries.
\newblock {\em J. Mar. Res.}, 67:779--814, 2009.

\bibitem{10_Dawson}
Ali Samii, Kazbek Kazhyken, Craig Michoski, and Clint Dawson.
\newblock A comparison of the explicit and implicit hybridizable discontinuous {G}alerkin methods for nonlinear shallow water equations.
\newblock {\em J. Sci. Comput.}, 80(3):1936--1956, 2019.

\bibitem{SanchezCiucaNguyenPeraireCockburn2017}
M.~A. S\'anchez, C.~Ciuca, N.~C. Nguyen, J.~Peraire, and B.~Cockburn.
\newblock Symplectic {H}amiltonian {HDG} methods for wave propagation phenomena.
\newblock {\em J. Comput. Phys.}, 350:951--973, 2017.

\bibitem{symplectic1}
Manuel~A. S\'anchez, Bernardo Cockburn, Ngoc-Cuong Nguyen, and Jaime Peraire.
\newblock Symplectic {H}amiltonian finite element methods for linear elastodynamics.
\newblock {\em Comput. Methods Appl. Mech. Engrg.}, 381:Paper No. 113843, 23, 2021.

\bibitem{Symplectic_Hamiltonian2}
Manuel~A. S\'anchez, Shukai Du, Bernardo Cockburn, Ngoc-Cuong Nguyen, and Jaime Peraire.
\newblock Symplectic {H}amiltonian finite element methods for electromagnetics.
\newblock {\em Comput. Methods Appl. Mech. Engrg.}, 396:Paper No. 114969, 27, 2022.

\bibitem{joaquin}
Manuel~A. S\'anchez and Joaqu\'in Valenzuela.
\newblock Symplectic {H}amiltonian finite element methods for semilinear wave propagation.
\newblock {\em J. Sci. Comput.}, 99(3):Paper No. 62, 27, 2024.

\bibitem{sanzserna2}
J.~M. Sanz-Serna.
\newblock The numerical integration of {H}amiltonian systems.
\newblock In {\em Computational ordinary differential equations ({L}ondon, 1989)}, volume~39 of {\em Inst. Math. Appl. Conf. Ser. New Ser.}, pages 437--449. Oxford Univ. Press, New York, 1992.

\bibitem{sanzserna}
J.~M. Sanz-Serna.
\newblock Symplectic integrators for {H}amiltonian problems: an overview.
\newblock In {\em Acta numerica, 1992}, Acta Numer., pages 243--286. Cambridge Univ. Press, Cambridge, 1992.

\bibitem{Schoeberl1997}
Joachim Sch{\"o}berl.
\newblock Netgen: An advancing front 2d/3d-mesh generator based on abstract rules.
\newblock {\em Comput. Vis. Sci.}, 1:41--52, 1997.

\bibitem{Schoeberl2014}
J.~Schöberl.
\newblock C++11 implementation of finite elements in ngsolve.
\newblock Technical Report~30, Institute for Analysis and Scientific Computing, Vienna University of Technology, 2014.

\bibitem{11_Cotter_MR3874575}
J.~Shipton, T.~H. Gibson, and C.~J. Cotter.
\newblock Higher-order compatible finite element schemes for the nonlinear rotating shallow water equations on the sphere.
\newblock {\em J. Comput. Phys.}, 375:1121--1137, 2018.

\bibitem{Spassova2011}
Tatiana Spassova and Stoycho Panchev.
\newblock A shallow water type model of large scale atmospheric dynamics.
\newblock {\em Bulgarian Journal of Meteorology and Hydrology}, 16:2--12, 2011.

\bibitem{MR3452794}
M.~Stanglmeier, N.~C. Nguyen, J.~Peraire, and B.~Cockburn.
\newblock An explicit hybridizable discontinuous {G}alerkin method for the acoustic wave equation.
\newblock {\em Comput. Methods Appl. Mech. Engrg.}, 300:748--769, 2016.

\bibitem{Essentials}
Geoffrey Vallis.
\newblock {\em Essentials of Atmospheric and Oceanic Dynamics}.
\newblock Cambridge University Press, 2019.

\bibitem{MR1784946}
Ji-Wen Wang and Ru-Xun Liu.
\newblock A comparative study of finite volume methods on unstructured meshes for simulation of 2{D} shallow water wave problems.
\newblock {\em Math. Comput. Simulation}, 53(3):171--184, 2000.

\bibitem{MR4038160}
Golo~A. Wimmer, Colin~J. Cotter, and Werner Bauer.
\newblock Energy conserving upwinded compatible finite element schemes for the rotating shallow water equations.
\newblock {\em J. Comput. Phys.}, 401:109016, 18, 2020.

\bibitem{6_2_WINTERMEYER2018447}
Niklas Wintermeyer, Andrew~R. Winters, Gregor~J. Gassner, and Timothy Warburton.
\newblock An entropy stable discontinuous {G}alerkin method for the shallow water equations on curvilinear meshes with wet/dry fronts accelerated by {GPU}s.
\newblock {\em J. Comput. Phys.}, 375:447--480, 2018.

\bibitem{XuvanderVegtBokhove08}
Yan Xu, Jaap J.~W. van~der Vegt, and Onno Bokhove.
\newblock Discontinuous {H}amiltonian finite element method for linear hyperbolic systems.
\newblock {\em J. Sci. Comput.}, 35(2-3):241--265, 2008.

\bibitem{MR3292650}
O.~C. Zienkiewicz, R.~L. Taylor, and P.~Nithiarasu.
\newblock {\em The finite element method for fluid dynamics}.
\newblock Elsevier/Butterworth Heinemann, Amsterdam, seventh edition, 2014.

\end{thebibliography}
\bibliographystyle{plain}

\end{document}